
\documentclass{amsart}
\usepackage{amsmath}
\usepackage{amscd}
\usepackage{amsfonts,latexsym,amssymb}
\usepackage{graphicx}
\usepackage{xcolor}
\usepackage[colorlinks=true]{hyperref}
\usepackage{cleveref}
\usepackage[normalem]{ulem}
\usepackage{bm}
\usepackage{tensor}
\usepackage[all]{xy}

\setlength{\textheight}{22.05cm} 

\numberwithin{equation}{section}

\textwidth 5.8in \oddsidemargin.2in \evensidemargin.2in
\parskip.15cm
\baselineskip.55cm

\newcommand{\s}{\sigma}

\renewcommand{\o}{\omega}

\renewcommand{\O}{\Omega}

\newcommand{\SA}{{\mathcal{A}}}

\newcommand{\SC}{{\mathcal{C}}}

\newcommand{\SM}{{\mathcal{M}}}

\newcommand{\CrC}{{\mathcal C}}
\newcommand{\CrD}{{\mathcal D}}
\newcommand{\CrE}{{\mathcal E}}

\newcommand{\CrS}{{\mathcal S}}

\newcommand{\Z}{\mathbb{Z}}
\newcommand{\C}{\mathbb{C}}

\newcommand{\N}{\mathbb{N}}
\newcommand{\RR}{\mathbb{R}}

\newcommand{\rd}{\mathrm{d}}

\newcommand{\congm}{\cong}

\newcommand{\Sym}{\operatorname{Sym}}

\newcommand{\im}{\operatorname{im}}

\newcommand{\dimR}{\text{dim}_{\R}\,}

\newcommand{\surj}{\twoheadrightarrow}

\newcommand{\too}{\longrightarrow}

\newcommand{\PD}{\operatorname{PD}}

\newcommand{\SOO}{\mathrm{SO}}
\newcommand{\OO}{\mathrm{O}}
\newcommand{\DD}{\mathbb{D}}
\newcommand{\UU}{\mathrm{U}}
\newcommand{\CC}{{\mathbb C}}
\newcommand{\ZZ}{{\mathbb Z}}
\newcommand{\NN}{{\mathbb N}}
\newcommand{\QQ}{{\mathbb Q}}
\newcommand{\R}{{\mathbb R}}
\newcommand{\orb}{\mathrm{orb}}

\newcommand{\la}{\langle}
\newcommand{\ra}{\rangle}
\newcommand{\sing}{\mathrm{Sing}}
\newcommand{\Obj}{\mathrm{Obj}}
\newcommand{\Mor}{\mathrm{Mor}}

\newcommand{\bd}{\partial}
\newcommand{\bbd}{\bar{\partial}}
\newcommand{\x}{\times}
\newcommand{\ox}{\otimes}


\newtheorem{proposition}{Proposition}[section]
\newtheorem{theorem}[proposition]{Theorem}
\newtheorem{definition}[proposition]{Definition}
\newtheorem{lemma}[proposition]{Lemma}

\newtheorem{corollary}[proposition]{Corollary}
\newtheorem{remark}[proposition]{Remark}
\newtheorem{ex}[proposition]{Example}

\hyphenation{a-symp-to-ti-ca-lly}

\title[Asymptotically holomorphic theory for symplectic orbifolds]{Asymptotically holomorphic theory \\ 
for symplectic orbifolds}

\author[F. Gironella]{Fabio Gironella}
\address{Department of Mathematics, Humboldt University, Berlin, Germany}
\email{fabio.gironella@math.hu-berlin.de, fabio.gironella.math@gmail.com}

\author[V. Mu\~{n}oz]{Vicente Mu\~{n}oz}
\address{Departamento de Algebra, Geometr\'{\i}a y Topolog\'{\i}a, Universidad de M\'alaga,
Campus de Teatinos, s/n, 29071 Malaga, Spain}
\email{vicente.munoz@uma.es}

\author[Z. Zhou]{Zhengyi Zhou}
\address{Morningside Center of Mathematics \& Institute of Mathematics, AMSS, CAS, China}
\email{zhyzhou@amss.ac.cn}

\begin{document}

\maketitle

\begin{abstract}
We extend Donaldson's asymptotically holomorphic techniques to symplectic orbifolds.
More precisely, given a symplectic orbifold such that the symplectic form defines an integer cohomology class,
we prove that there exist sections of large tensor powers of 
the prequantizable line bundle such that their zero sets are 
symplectic suborbifolds. We then derive a Lefschetz hyperplane theorem for
these suborbifolds, that computes their real cohomology up to middle dimension. We also get 
the hard Lefschetz and formality properties for them, when the ambient manifold satisfies those
properties.
\end{abstract}

\section{Introduction} \label{sec:introduction}

In his pioneering work \cite{Do96}, Donaldson introduced the notion of
\emph{asymptotically holomorphic} sections, satisfying a certain \emph{quantitative
transversality} condition with respect to the zero section, on certain complex
line bundles over a given symplectic manifold. His main motivation was the
construction of codimension $2$ symplectic submanifolds belonging to a
cohomology class canonically associated to (a large integer multiple of) the
symplectic form.

Since this foundational work, many authors have further developed the theory
and explored its consequences. First, \cite{Au97} improved the techniques of
Donaldson and worked in the setting of complex vector bundles of higher rank
and with parametric families of almost complex structures. The work \cite{Aur02} also
gave a simplification of a key technical ingredient in \cite{Do96} using
results on complexity of real algebraic sets from \cite{Yom83}. As far as
improvements concerning the properties of the resulting symplectic submanifolds
are concerned, it has been proven in \cite{AGM01} that this can be chosen to avoid an \emph{a priori}
given isotropic submanifold, and \cite{Moh19} proved that additional
transversality conditions along an a priori given submanifold can be
guaranteed. It has been furthermore shown in \cite{Gir17} that the complement of
Donaldson divisors have a Weinstein structure of finite type. In the contact
setting, \cite{IMP99} developed Donaldson's techniques in order to provide
contact codimension $2$ submanifolds with similar properties to Donaldson
divisors (see also the generalization in \cite{IboMarTor04}).

The techniques from \cite{Do96} also paved the way for very important
topological decompositions in symplectic and contact topology. First,
\cite{Do99} proved existence of Lefschetz pencils, whose properties have further
been studied in \cite{AMP04}. Another striking consequence is given in
\cite{ADK05}, where it is shown that, on any oriented smooth closed
$4$-manifold whose intersection form is not negative definite, there is a
``singular'' Lefschetz pencil. Analogous constructions in the case of ``linear
systems'', namely the existence of asymptotically holomorphic sections valued in $\CC P^3$
or in $\CC P^N$ with $N\gg 0$ have been studied respectively in
\cite{Au99,Aur01} and \cite{MPS02}. What's more, \cite{AMP05} also proves that
the pencil can be arranged to induce a Morse function  on a given Lagrangian
submanifold. In the open symplectic manifold setting, \cite{GirPar17} used
these techniques to obtain the existence of Lefschetz fibrations on Stein manifolds
(also on Weinstein manifolds via \cite{CieEliBook}). In the contact setting,
\cite{Gir02} built on the results in \cite{IMP99} in order to show the
existence of supporting open book decompositions for high-dimensional contact
manifolds (see also \cite{PreNotes} for an extension to almost contact manifolds). In a similar direction,
\cite{Pre02} found another kind of topological decomposition for contact
manifolds, namely contact Lefschetz pencils.

Other interesting applications of this powerful theory have also been given in
different directions. For instance, \cite{FM} proved formality of
Donaldson submanifolds. Moreover, in the foliated setting, \cite{OMP05} used
these techniques to study (complex) codimension $1$ (singular) symplectic
foliations, whereas \cite{MdPP18} used them to understand (real) codimension
$1$ strong symplectic foliations.

In this paper, we explore the use of asymptotically holomorphic techniques in
the symplectic orbifold setting.
The motivation to do so comes from the recent traction which the study of symplectic orbifolds has been gaining. 
Indeed, naturally appearing in relation to the Mirror Conjecture (see e.g.\ \cite{MR1115626}), orbifold symplectic geometry has recently been used in order to get very interesting results in the smooth symplectic setting. 
For instance, Lagrangian Floer theory developed in \cite{orbiLagrangian} has recently been utilized in \cite{mak2021non} to obtain new families of non-displaceable Lagrangian links in symplectic four-manifolds. 
This idea was further explored in \cite{polterovich2021lagrangian,cristofaro2021quantitative} which brought breakthroughs on dynamics on surfaces and $C^0$ symplectic geometry. 
Symplectic orbifolds were also used in \cite{gironella2021exact} to study the symplectic cobordism category of contact {manifolds}. From yet another point of view, they also appear in constructions of symplectic manifolds via the desingularisation process. This was introduced in \cite{FM-Annals} to construct new non-formal symplectic manifolds, and later developed in \cite{CFM, NiPa, MuRo, Chen, MMR}.

It is hence a natural direction of research that of extending well known and powerful techniques in smooth symplectic geometry to the orbifold setting.
The first set of tools are pseudo-holomorphic and Floer theories, which have been the object of study for instance in \cite{CR01,orbiLagrangian,gironella2021exact}. 
This paper is devoted to extend the techniques in \cite{Do96} to the orbifold setting.
Applications in terms of geometric decompositions of symplectic/contact orbifolds, in analogy with the smooth case recalled above, will be the object of future work.
\\

In order to state the main result of this work, let us
introduce some notations first.
Let $(X,\omega)$ be a symplectic orbifold with 
$[\omega/2\pi]\in \bar H^2(X,\ZZ)$,
where $\bar H^2(X,\ZZ)$ denotes the image of the 
singular cohomology of the topological space $X$ in the de Rham cohomology.
We fix a compatible almost complex structure $J$, and let $g$ be the associated
Riemannian metric $g=\omega(\cdot,J\cdot)$. Consider now a Hermitian complex
line bundle $L\to X$ with $c_1(L)=[\omega/2\pi]$, and a connection $\nabla$ on it,
with curvature $F_{\nabla}= -i\omega$. 
We also consider the tensor power $L^{\otimes k}$, for $k\geq 1$.
This has an induced connection, again denoted by $\nabla$ with a little abuse
of notation, whose curvature is $F_{\nabla} =-i \omega_k=-ik\,\omega$, where
$\omega_k=k\,\omega$ is the rescaled symplectic form.

\begin{theorem}
\label{thm:existence_intro}
For $k\gg 0$, there exists an asymptotically holomorphic sequence of sections $s_k$ of $L^{\otimes k}$ that is $\eta$-transverse to $0$, for some $\eta>0$ independent of $k$. 
In particular, $s_k^{-1}(0)$ is a symplectic suborbifold.
\end{theorem}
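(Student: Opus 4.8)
The plan is to adapt Donaldson's original argument from \cite{Do96}, working locally in orbifold charts and assembling the local constructions via a globalisation procedure. The key point is that a symplectic orbifold is, away from a codimension $\geq 2$ stratified singular locus, an ordinary symplectic manifold, and near any point it looks like a quotient $\widetilde U/\Gamma$ of a ball in $\CC^n$ by a finite group $\Gamma$ acting linearly (in suitable Darboux-type coordinates compatible with $J$). On such a chart the prequantum line bundle $L^{\otimes k}$ carries a preferred \emph{reference section} $\sref$ (a Gaussian peak section, concentrated in a ball of $g_k$-radius $O(1)$ where $g_k = k g$), which is asymptotically holomorphic and whose covariant derivative is uniformly bounded below on a ball of fixed $g_k$-radius. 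When the chart group $\Gamma$ is nontrivial one must first replace $\sref$ by its $\Gamma$-average (or the sum over the orbit of translated peak sections) so as to obtain a genuine section of the orbifold bundle; one checks that averaging preserves the asymptotic holomorphicity estimates and, provided one centres the peak at a point whose $\Gamma$-orbit is sufficiently spread out relative to the peak width (which holds for $k \gg 0$ at generic points), it preserves the lower bound on the derivative near the centre.

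Next I would carry out the following steps, in order. First, fix a uniformly locally finite cover of $X$ by orbifold charts at a lattice of points $\{x_j\}$ whose $g_k$-balls of fixed radius cover $X$ and have bounded overlap multiplicity; this uses only the compactness of $X$ and the bounded geometry of $(X, g_k)$, which in the orbifold setting follows because the orders of the local groups are globally bounded. Second, establish the orbifold version of the two analytic inputs: (a) the existence of asymptotically holomorphic reference sections $\sref$ with the peak/derivative estimates, obtained by $\Gamma$-averaging the model peak sections as above; and (b) a quantitative transversality result for the $L^2$-norm of asymptotically holomorphic sections, i.e.\ the Donaldson--Auroux globalisation lemma stating that finitely many successive local perturbations, each achieving $\eta$-transversality of $s_k / \|s_k\|$ on one chart ball while only mildly disturbing the others, can be combined into a single $s_k$ that is $\eta$-transverse to $0$ on all of $X$ with $\eta > 0$ independent of $k$. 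The local transversality step rests on the Yomdin--Auroux estimate on the complexity of near-zero-sublevel sets of approximately holomorphic functions (as in \cite{Aur02}), which is a statement about functions on balls in $\CC^n$ and so transplants to orbifold charts after passing to the finite cover $\widetilde U \to \widetilde U / \Gamma$. Third, once $s_k$ is asymptotically holomorphic and $\eta$-transverse to $0$, invoke the standard estimate that on the zero set the restriction of $\omega_k$ dominates the restriction of any $J$-antiholomorphic term, so $\omega$ restricts to a symplectic form on $s_k^{-1}(0)$; and the $\eta$-transversality guarantees $s_k^{-1}(0)$ is cut out transversally in each chart, hence is a suborbifold (the local groups of the suborbifold being the stabilisers in $X$ of its points).

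The main obstacle I anticipate is the interaction between the finite isotropy groups and the quantitative transversality estimates: one must ensure that all constants (the peak width, the lower bound on $|\nabla \sref|$, the transversality parameter $\eta$, the perturbation sizes in the globalisation lemma, and the polynomial degree in the Yomdin estimate) can be chosen uniformly in $k$ \emph{and} uniformly over the charts, despite the charts having different isotropy groups. This requires a global bound on the orders $|\Gamma_x|$ (automatic for a closed effective orbifold) and care that the $\Gamma$-averaging does not degrade the derivative lower bound — which is why the peak must be centred at points whose orbit is well-separated at scale $g_k$, forcing a mild genericity in the choice of the lattice $\{x_j\}$ that must be shown compatible with the covering property. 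A secondary, more bookkeeping-type difficulty is setting up the correct notion of "asymptotically holomorphic sequence of sections" and "$\eta$-transverse to $0$" for orbifold bundles so that the conclusion "$s_k^{-1}(0)$ is a symplectic suborbifold" is literally meaningful; this is where one must be precise about orbifold charts, orbibundles, and sub\-orbifolds, but it involves no real analytic difficulty beyond the smooth case. Everything else is a routine, if lengthy, transcription of \cite{Do96, Au97, Aur02} into the orbifold language.
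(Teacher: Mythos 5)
Your overall skeleton is correct — $\Gamma$-averaged peak sections, Donaldson/Auroux-style local perturbations, a globalisation step, and the standard observation that $\eta$-transversality plus asymptotic holomorphicity makes the zero set symplectic — and your identification of the key tension (the need to centre peaks at points whose $\Gamma$-orbit is well-separated at scale $g_k$, versus the need for the balls around the lattice to cover $X$) is exactly right. However, you then wave this tension away as ``a mild genericity in the choice of the lattice $\{x_j\}$ that must be shown compatible with the covering property,'' and this is the step that fails: these two requirements are \emph{not} compatible for a single lattice at a single scale. For any $k$, the set of points whose local uniformizer orbit is separated by at least $CR$ is precisely the complement of a $g_k$-radius $CR$ tubular neighbourhood of the isotropy locus, so a lattice of such points with balls of $g_k$-radius $O(1)$ around them can never cover that tubular region. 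Genericity does not help: the obstruction is a full-measure open neighbourhood of $\sing(X)$, not a null set you can dodge. Consequently the single-pass globalisation lemma you invoke, applied to one lattice with one ball radius and one transversality parameter, cannot give transversality near the singular locus.

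The paper's proof resolves this by replacing the single lattice with a stratified family of lattices and by running the globalisation \emph{inductively over the strata}, from the top stratum down. Concretely: first one builds a lattice $\Lambda_{\tau_{\max}}$ with well-separated orbits on the complement of a $CD$-neighbourhood of the singular set (this is the content of ``property (P)'' and Proposition \ref{prop:lattice}), and perturbs there with balls of radius $R_{\tau_{\max}}=1$ to get some transversality $\eta_{\tau_{\max}}$. The region missed near $\sing(X)$ is then handled by a lattice $\Lambda_\tau$ \emph{on each singular stratum $\tau$ itself}, using equivariant peak sections centred on $X_\tau$; but because these must cover the full $CD_{\tau_{\max}}$-collar missed in the previous step, the ball radius $R_\tau$ must be taken much larger than $C_{\tau_{\max}} D_{\tau_{\max}}$. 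Larger balls require a rescaled version of Donaldson's quantitative transversality theorem (Theorem \ref{thm:transverse}) and degrade the transversality parameter; Lemma \ref{lem:xalpha} and Proposition \ref{prop:size} then show that one can pick $D_\tau, R_\tau, \eta_{\tau,i}$ simultaneously so that (i) each stratum's balls cover what the higher strata missed, (ii) each new round of perturbations is small enough not to destroy the transversality already achieved on higher strata ($\eta_{\theta,1} < \tfrac12 \eta_{\tau, C_\tau D_\tau^m}$ for $\theta<\tau$), and (iii) the final transversality parameter stays bounded below independently of $k$. None of this multi-scale bookkeeping appears in your proposal, and without it the argument stalls at the $CR$-collar of $\sing(X)$. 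In short, your ``main obstacle'' paragraph correctly diagnoses the problem but proposes no cure; the cure is the stratified inductive construction, which is the genuinely new content of the theorem.
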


The notions of ``asymptotically holomorphic'' and ``$\eta$-tranverse'', already appearing in \cite{Do96}, will be made precise in \Cref{sec:donaldson}.
For the time being, an asymptotically holomorphic sequence of sections $s_k$ can be understood as a sequence of sections $s_k$ of $L^{\otimes k}$ with $|\overline{\partial}_J s_k|$ arbitrarily small, provided that $k$ is big enough.
Moreover, $\eta$-transversality can be thought of as a quantitative refinement of usual transversality.
The second part of the statement follows from the first part by an easy argument using the fact that the almost complex structure $J$ is adapted to $\omega$ (see \Cref{prop:sympl_suborb_from_sequence}).

The proof of the first part is on the contrary more involved, and follows the same line of argument as in \cite{Do96}.
However, there is an additional difficulty, coming from the orbifold setup and consisting in the following:
for those points which are very near the isotropy locus, one  needs to achieve transversality via equivariant sections, defined in a local orbifold chart centered at a nearby isotropy point, and the perturbation lemma as in \cite{Do96} is no longer sufficient (for points which are not on the isotropy locus itself).
Here, we propose the following solution to this issue. 
First, one proves, via an inverse inductive argument on the ``height'' of the orbifold stratum (i.e.\ starting from the biggest strata and going to the smaller ones), that there is a ``convenient" lattice of points in each stratum, away from a neighborhood (of a certain specific size) of the strata strictly contained in it. 
Then, one proves that transversality can be achieved everywhere, again by inverse induction on the height, by perturbing around points of these lattices in each stratum (using the perturbation lemma in \cite{Do96}). 
The subtle point is of course showing that this inductive procedure indeed results in achieving transversality everywhere. 
This follows from the special properties of the previously found lattice (which we naively summarized in the previous sentence with the adjective ``convenient''), namely from the following property: local perturbations at each of the points of the lattice in a given stratum allows to achieve transversality at least up to the region which is missed by the lattices in the bigger strata. 

\begin{remark}
\label{rmk:almost_hol}
As it has been pointed out to us by Steven Zelditch, it is worth pointing out that, in the setting of a smooth manifold $X$, there is an alternative approach to constructing sections of $L^{\otimes k}\to X$ which are, for $k$ increasing, ``more and more holomorphic''.
Following ideas in \cite{Bou74,BouGui81}, this approach has been carried out in detail in \cite{ShiZel02}, where \emph{almost holomorphic sections} are studied, which are essentially the kernel of a zero-th order perturbation of the operator $\overline{\partial}_L$. 
The authors then prove how to achieve quantitative transversality to the zero section for almost holomorphic sections adapting the arguments in \cite{Do96,Au97}, thus also obtaining the existence of symplectic divisors via this alternative point of view.
One could carry out the theory of almost holomorphic sections in the orbifold setting as well with minimal modifications; the difficulty in the orbifold case still resides however in achieving transversality.

A further point worth stressing out, already in the smooth case, is the fact that, despite transversality is a generic condition in the smooth world, asymptotically holomorphic (and also almost holomorphic) sections which are transverse to the zero section seem to be a ``rare'' phenomenon, with their existence being a very deep result and requiring extremely non-trivial arguments.
\end{remark}

After \Cref{thm:existence_intro} is proved, we explore its consequences.
First, we point out an obvious corollary consisting in the existence of invariant Donaldson submanifolds in the setting of finite group actions by symplectomorphisms on symplectic manifolds.

We then prove the following homological version of the Lefschetz hyperplane theorem for symplectic orbifolds:

\begin{theorem}
\label{thm:homology_lefschetz_hyperplane_intro}
Let $(X,\omega)$ be a symplectic orbifold of dimension $2n$, and $J$ a compatible almost complex structure on it. 
Consider then a sequence of asymptotically holomorphic
 sections $s_k$ of $L^{\ox k}$ which is $\eta$-transverse to zero, and let $Z_k=Z(s_k)$ be their zero sets. 
 Then for $k\gg 0$
 $$
   \begin{aligned}
   H_i(Z_k,\RR) \cong H_i(X,\RR), \text{ for }i\leq n-2, \\
  H_i(Z_k,\RR) \twoheadrightarrow H_i(X,\RR), \text{ for }i= n-1.
  \end{aligned}
$$
\end{theorem}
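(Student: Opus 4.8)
The plan is to mimic the classical proof of the Lefschetz hyperplane theorem in the projective/symplectic setting, but carried out in the orbifold category using (co)homology of the underlying topological spaces with real coefficients. The key geometric input is that the Donaldson hypersurface $Z_k = Z(s_k)$ is the zero set of a section of $L^{\otimes k}$ that is $\eta$-transverse to zero, hence it is a symplectic suborbifold of real codimension $2$ by \Cref{thm:existence_intro}. The first step is to establish a Morse-theoretic or handlebody description of the complement $X \setminus Z_k$, or equivalently of a tubular neighborhood $\nu(Z_k)$: one shows that $X$ is obtained from a regular neighborhood of $Z_k$ by attaching orbifold handles of index $\geq n$. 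In the smooth symplectic case this is done by studying the function $f = \log|s_k|^2$ (or $-\log|s_k|$) away from $Z_k$ and showing, using the asymptotic holomorphicity and $\eta$-transversality, that its critical points have Morse index $\geq n$; the asymptotic holomorphicity makes the Hessian "close to a Kähler Hessian", forcing the negative eigenspace to be small. I would run the same argument in each local uniformizing chart, taking care that the estimates are uniform over the (finitely many) strata and that one can choose the auxiliary data equivariantly near the isotropy locus, exactly as in the proof of \Cref{thm:existence_intro}.

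Granting that handle decomposition, the second step is the standard excision/long-exact-sequence argument. From $X \simeq \nu(Z_k) \cup (\text{handles of index} \geq n)$ one gets that the pair $(X, \nu(Z_k))$ is $(n-1)$-connected in the appropriate sense, so $H_i(X, \nu(Z_k); \RR) = 0$ for $i \leq n-1$; since $\nu(Z_k)$ deformation retracts onto $Z_k$ (the projection $\nu(Z_k) \to Z_k$ of the normal orbifold disc bundle is a homotopy equivalence on underlying spaces), this gives $H_i(\nu(Z_k);\RR) \cong H_i(Z_k;\RR)$. Feeding this into the long exact sequence of the pair $(X, Z_k)$ (using the inclusion $Z_k \hookrightarrow \nu(Z_k) \hookrightarrow X$) yields
$$
H_i(Z_k;\RR) \xrightarrow{\ \cong\ } H_i(X;\RR) \quad (i \leq n-2), \qquad
H_{n-1}(Z_k;\RR) \twoheadrightarrow H_{n-1}(X;\RR),
$$
which is exactly the claimed statement. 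Here it is important that we work with real coefficients: the underlying space $X$ is in general only a rational (indeed real) homology manifold, so Poincaré–Lefschetz duality and the fact that $\bar H^*(X,\ZZ) \otimes \RR$ computes de Rham cohomology hold with $\RR$-coefficients but not necessarily over $\ZZ$; this is why the theorem is stated for $H_*(\,\cdot\,,\RR)$.

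The main obstacle I anticipate is the Morse theory step in the orbifold setting: one must make sense of "Morse index $\geq n$" for the function $\log|s_k|^2$ on the orbifold $X \setminus Z_k$, handle the fact that critical orbifold points on the isotropy locus carry isotropy actions (so that the negative normal bundle of a critical orbit is an equivariant bundle, and one attaches \emph{orbifold} handles $D^j \times_{\Gamma} D^{2n-j}$), and show that the resulting attaching does not affect real homology below degree $n$ — for which one needs that the relevant local group cohomology $H^*(\Gamma;\RR)$ vanishes in positive degrees, true since $\Gamma$ is finite. An alternative that sidesteps a full orbifold Morse theory is to argue by stratified induction: prove the statement first for the open dense top stratum (smooth symplectic manifold, classical result of Donaldson), then work inductively up the strata, at each stage comparing $X$ and $Z_k$ along a fixed stratum $X_j$ via the Mayer–Vietoris sequence associated to a tubular neighborhood of $X_j$ and using that $Z_k$ meets $X_j$ transversally (again by the adaptedness of $J$), so that $Z_k \cap X_j$ is a Donaldson-type hypersurface inside $X_j$ to which the inductive hypothesis applies; the neighborhoods of strata being mapping tori / disc bundles with finite structure group, their real cohomology is controlled by the invariant part, and the Lefschetz comparison propagates. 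I would present the Morse-theoretic proof as the main line and remark on the inductive alternative.
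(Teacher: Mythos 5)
Your proposal is essentially the paper's argument: the authors also study $f_k=\log|s_k|^2$ on $X\setminus Z_k$, use asymptotic holomorphicity to force the Morse index to be at least $n$ at every critical point, invoke orbifold Morse theory (Hepworth's \cite[Theorem 7.6]{Hep}) to describe the sublevel sets as the previous sublevel set with a cone $\DD(\mathrm{ind}_p)/H$ attached, and then observe that $H_*(\DD^m/H, S^{m-1}/H;\RR)$ is concentrated in degree $m$ because $H$ is finite and the coefficients are $\RR$. Two small points worth flagging. First, there is a sign slip in your write-up: you say asymptotic holomorphicity forces the \emph{negative} eigenspace of the Hessian to be small, but what the complex-Hessian estimate $\bar\partial\partial f_k = i\omega_k + O(k^{-1/2})$ actually gives is that the \emph{positive} eigenspace has dimension at most $n$ (since $\bar\partial\partial f_k$ is negative definite and $P\cap JP\neq 0$ for any $P$ with $\dim P>n$), and hence the Morse index, i.e.\ the dimension of the negative eigenspace, is at least $n$ — which is what you correctly state as the conclusion. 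Second, you do not explicitly address that $\log|s_k|^2$ may fail to be Morse; the paper devotes \Cref{prop:ddd} to perturbing $s_k$ to $s_k'$ (supported away from a tube around $Z_k$) so that $\log|s_k'|^2$ is orbifold Morse while $Z(s_k')=Z(s_k)$ and asymptotic holomorphicity and $c\eta$-transversality are preserved; this perturbation step is a genuine (if routine) ingredient and should be included. Your proposed alternative via stratified induction has real difficulties that the Morse argument avoids — the strata $X_\tau$ are images of immersed orbifolds rather than embedded symplectic suborbifolds, and controlling $Z_k\cap X_\tau$ and the Mayer–Vietoris gluing along the stratification is delicate — so you are right to present the Morse-theoretic line as primary.
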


In analogy with the smooth statement from \cite{Do96}, one could wonder what happens for the orbifold homotopy groups.
In this paper, we prove surjectivity of the map induced at the level of the orbifold fundamental group (see 
\Cref{thm:lefschetz_hyperplane_fund_grp}). We also extend this result to the case of the homotopy groups ignoring torsion (see \Cref{thm:homotopy_lefschetz_hyperplane}).

A second consequence we explore is related to the \emph{hard Lefschetz property}. This was proved for symplectic manifolds in \cite{FM}.
A symplectic orbifold $(X,\omega)$ of dimension $2n$ is said to satisfy this property if  
  $$
   [\omega]^{n-i}: H^i(X,\RR) \too H^{2n-i}(X,\RR)
  $$
is an isomorphism for all $i\leq n-1$. 

\begin{theorem} \label{hardLefschetzDonaldson_intro}
Let $X$ be a compact symplectic orbifold of dimension $2n$, and
let $Z\subset X$ be a Donaldson suborbifold (i.e.\ the zero set of a section as in \Cref{thm:existence_intro}, for $k$ big enough). 
If $X$ satisfies the
hard Lefschetz property, then $Z$ does too.
\end{theorem}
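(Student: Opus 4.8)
The plan is to mirror the argument of \cite{FM} for the smooth case, using the homological Lefschetz hyperplane theorem (\Cref{thm:homology_lefschetz_hyperplane_intro}) together with Poincar\'e duality for compact symplectic orbifolds (which holds with real coefficients, since an orbifold is rationally a homology manifold). Write $\dim X = 2n$, so $\dim Z = 2n-2$. The key structural input is that the inclusion $\iota\colon Z \hookrightarrow X$ induces, on real cohomology, an \emph{isomorphism} $\iota^*\colon H^i(X,\RR)\to H^i(Z,\RR)$ for $i\le n-2$ and an \emph{injection} for $i=n-1$ (dualize \Cref{thm:homology_lefschetz_hyperplane_intro}). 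Dually, the Gysin pushforward $\iota_!\colon H^j(Z,\RR)\to H^{j+2}(X,\RR)$ is an isomorphism for $j\ge n$ and a surjection for $j=n-1$; and the composite $\iota_!\circ\iota^*\colon H^*(X,\RR)\to H^{*+2}(X,\RR)$ is cup product with the orbifold class $\mathrm{PD}[Z] = [\omega_k/2\pi] = \tfrac{k}{2\pi}[\omega]$, hence (up to the nonzero scalar $k/2\pi$) is exactly multiplication by $[\omega]$.

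I would then prove the hard Lefschetz isomorphism $L_Z^{(n-2)-i}\colon H^i(Z,\RR)\to H^{2n-4-i+2\cdot?}\dots$ — more precisely that $[\omega|_Z]^{(n-2)-i}\colon H^i(Z)\to H^{2(n-2)-i}(Z)$ is an isomorphism for $i\le n-2$ — by the following diagram chase. For $i\le n-2$ one has the commuting square relating $\iota^*$, the Lefschetz maps $L_X=\cup[\omega]$ on $X$ and $L_Z=\cup[\omega|_Z]$ on $Z$, and $\iota_!$: namely $\iota^* L_X^{m} = L_Z^{m}\iota^*$ (since $\omega|_Z = \iota^*\omega$) and $\iota_! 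L_Z^{m} = L_X^{m}\iota_!$ (projection formula). First treat $i\le n-2$ with target degree $2(n-2)-i \le n-2$ as well, i.e.\ $i \ge n-2$: the single remaining case $i=n-2$ is handled directly. For $i<n-2$, factor the required power of $L_Z$ so that the middle degree $n-2$ or $n-1$ is crossed, and combine: $\iota^*\colon H^i(X)\xrightarrow{\ \sim\ }H^i(Z)$ (iso, $i\le n-2$), $L_Z$ up to degree $n-2$, then cross into the top half using $\iota_!$ and $L_X$, then come back down with $\iota^*$. Concretely, surjectivity of $L_Z^{n-2-i}$ on $H^i(Z)$ follows from: every class in $H^{2n-4-i}(Z)$ is $\iota^*$ of a class in $H^{2n-4-i}(X)$ when $2n-4-i\le n-2$, i.e.\ $i\ge n-2$ — so only $i=n-2$ matters and there $L_Z^0=\mathrm{id}$. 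For $i<n-2$ one instead pushes forward: $\iota_!\colon H^{2n-4-i}(Z)\to H^{2n-2-i}(X)$ is an isomorphism (as $2n-2-i\ge n+1>n$), and an element of $H^i(Z)$ lifts to $H^i(X)$; now use $\mathrm{HL}$ on $X$ for the exponent $(n-i)$, together with $\iota_!\iota^* = \tfrac{k}{2\pi}L_X$, to compare $\iota_!L_Z^{n-2-i}$ with $L_X^{n-1-i}$ on lifts, and conclude bijectivity of $L_Z^{n-2-i}$ from bijectivity of $L_X^{n-i}$ and of $\iota_!$ in the relevant range. Injectivity is dual (or follows by the same chase reading degrees from the other end), and Poincar\'e duality on $Z$ then upgrades ``iso in the bottom half'' to hard Lefschetz in all degrees $i\le n-2$ — which, since $\dim Z = 2(n-2)$, is the full hard Lefschetz property for $Z$.

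The main obstacle is bookkeeping the degree ranges so that, for each $i\le n-2$, the needed power of $L_Z$ can be transported between $X$ and $Z$ through the window where $\iota^*$ is an isomorphism (degrees $\le n-2$) and through the window where $\iota_!$ is an isomorphism (degrees $\ge n$), with the borderline degrees $n-1$ (where $\iota^*$ is only injective and $\iota_!$ only surjective) handled by an extra cup-with-$[\omega]$ step that moves out of the borderline; this is where one must be careful that injectivity plus the known hard Lefschetz rank count on $X$ force the maps on $Z$ to be bijective rather than merely injective or surjective. A secondary point to check is that Poincar\'e duality, the Gysin sequence, and the projection formula all hold for compact symplectic orbifolds with real coefficients — this is standard (e.g.\ via the fact that rationally an effective orbifold behaves like its underlying space, or via $G$-equivariant Poincar\'e duality on local charts), and I would cite it rather than reprove it. No transcendental or analytic input beyond \Cref{thm:existence_intro} and \Cref{thm:homology_lefschetz_hyperplane_intro} is needed; the argument is purely (co)homological.
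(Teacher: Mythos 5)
Your overall strategy is sound and coincides with the paper's: use Poincar\'e duality for orbifolds with real coefficients, the fact that $\mathrm{PD}[Z]$ is a positive multiple of $[\omega]$, and \Cref{thm:homology_lefschetz_hyperplane_intro} to reduce hard Lefschetz on $Z$ to hard Lefschetz on $X$. The paper packages this as the clean equivalence $j^*[z]=0\iff [z]\cup[\omega]=0$ for $[z]\in H^{2(n-1)-i}(X)$, $i\le n-2$, which immediately yields injectivity of $[\omega_Z]^{n-1-i}$ and, by Poincar\'e duality on $Z$, bijectivity; your version phrases it through $\iota_!$ and the projection formula, which is the same computation in Gysin language.

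That said, there is a persistent arithmetic error running through your write-up: $Z$ has real codimension $2$, so $\dim Z = 2n-2 = 2(n-1)$, not $2(n-2)$ as you repeatedly write. Consequently the hard Lefschetz map for $Z$ is $[\omega_Z]^{(n-1)-i}\colon H^i(Z)\to H^{2(n-1)-i}(Z)$, not $[\omega_Z]^{(n-2)-i}$, and the composite you should form is $\iota_!\circ L_Z^{n-1-i}\circ(\iota^*)^{-1} = c\,L_X^{n-i}$ on $H^i(X)$ (up to a nonzero scalar $c$), with $L_X^{n-i}$ invertible by hard Lefschetz on $X$ for $i\le n-1$. You did record $\dim Z = 2n-2$ correctly at the start, so this is a slip rather than a conceptual mistake, but it propagates through the exponents and the closing line.

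Once the degrees are fixed, the ``bookkeeping obstacle'' you worry about disappears: for $i\le n-2$ the source degree $i$ automatically lies in the range where $\iota^*$ is an isomorphism and the target degree $2(n-1)-i\ge n$ automatically lies in the range where $\iota_!$ is an isomorphism, so no crossing of borderline degrees is ever required. The argument is a one-step sandwich $L_Z^{n-1-i} = \iota^*\circ(\iota_!)^{-1}\circ c\,L_X^{n-i}\circ(\iota^*)^{-1}$ (or, dually, conclude injectivity and invoke Poincar\'e duality on $Z$ to promote it to bijectivity, as the paper does). The paper in fact proves a slightly stronger statement, \Cref{Lefschetz-Donaldson}: $X$ is $s$-Lefschetz if and only if $Z$ is, for all $s\le n-2$; the converse direction uses the same equivalence and is worth noting.
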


We also draw some conclusions regarding \emph{formality} of the asymptotically holomorphic suborbifold in terms of the ambient symplectic orbifold. 
Formality is a property of the real homotopy type of a space that says that the real homotopy groups are completely determined by its real cohomology rings (in principle, for simply connected or nilpotent spaces, see \Cref{sec:formality} for precise definitions).
K\"ahler manifolds and K\"ahler orbifolds are formal according to, respectively, \cite{DGMS} and \cite{BBFMT}, whereas symplectic manifolds are not formal in general \cite{TO}. 
One striking result was the construction of a simply connected symplectic $8$-manifold which is not formal \cite{FM-Annals} using symplectic resolution of singularities of symplectic orbifolds. 
The formality property of asymptotically holomorphic divisors in symplectic manifolds was studied in \cite{FM}. 
This was done by a generalization of the notion of formality, namely \emph{$s$-formality}. 
Here we extend the result to the orbifold case.

\begin{theorem}
\label{thm:formality}
Let $X$ be a compact symplectic
orbifold of dimension $2n$ and let $Z\subset X$ be a Donaldson suborbifold. 
For each $s\leq n-2$, if $X$ is $s$-formal then $Z$ is $s$-formal. In particular,
$Z$ is formal if $X$ is $(n-2)$-formal.
\end{theorem}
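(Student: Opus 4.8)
The plan is to reduce the orbifold statement to the smooth-manifold case treated in \cite{FM}, via a careful analysis of the Lefschetz hyperplane theorem (\Cref{thm:homology_lefschetz_hyperplane_intro}) and its cohomological counterpart. First I would recall the notion of $s$-formality: a minimal model $(\Lambda V,d)$ is $s$-formal if, for each $i\leq s$, the space $V^i$ decomposes as $V^i=C^i\oplus N^i$ with $d|_{C^i}=0$, $d|_{N^i}$ injective, and every closed element in the subalgebra generated by $\bigoplus_{i\leq s}N^i$ is exact in $\Lambda(\bigoplus_{i\leq s}V^i)$. For orbifolds, all of this is applied to the rational/real homotopy type carried by the de Rham complex of orbifold forms (equivalently the complex of the classifying space), which is well defined since orbifolds are rationally good spaces; I would make this identification explicit at the start so that the algebraic machinery from \cite{FM} applies verbatim.

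The key steps, in order, are as follows. Step 1: establish the cohomological Lefschetz hyperplane theorem from the homological one in \Cref{thm:homology_lefschetz_hyperplane_intro}, i.e.\ that the restriction $H^i(X,\RR)\to H^i(Z,\RR)$ is an isomorphism for $i\leq n-2$ and injective for $i=n-1$ (here $\dim Z=2n-2$, so ``middle dimension'' for $Z$ is $n-1$); this is immediate by universal coefficients over $\RR$. Step 2: show that the inclusion $j\colon Z\hookrightarrow X$ induces a map of (real) minimal models $\varphi\colon \mathcal{M}_X\to\mathcal{M}_Z$ which is an isomorphism on generators in degrees $\leq n-2$ and injective on $H^{n-1}$; this uses that the minimal model functor is natural and that an iso on cohomology through degree $n-2$ plus surjectivity in degree $n-1$ forces the generator spaces $V^i$ of the two models to agree for $i\leq n-2$, by the standard inductive construction of minimal models (the low-degree generators and differentials are determined by cohomology in degrees $\leq i+1$). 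Step 3: transport the $s$-formality splitting $V^i=C^i\oplus N^i$ of $\mathcal M_X$ (for $i\leq s\leq n-2$) through $\varphi$ to get the corresponding splitting on $\mathcal M_Z$ in degrees $\leq s$, and verify the closed-implies-exact condition on the subalgebra generated by the $N^i$'s. Step 4: conclude that $Z$ is $s$-formal, and in particular $(n-2)$-formal implies formal by the criterion (valid for $2n-2 = \dim Z$, using that $(n-2)$-formality of a manifold of dimension $2n-2$ implies formality — this is exactly the dimension range in which the \cite{FM} argument was designed).

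The main obstacle I expect is Step 3, specifically verifying the condition that every closed element of the ideal generated by $N^{\leq s}$ in $\mathcal M_Z$ that becomes exact does so already within $\Lambda(V^{\leq s}_Z)$. In \cite{FM} this is handled by a delicate argument comparing the minimal models of $X$ and $Z$ degree by degree and using that the map on cohomology is injective one degree past the isomorphism range; the orbifold case needs this argument to go through with the orbifold de Rham complex replacing the smooth one. The point to check carefully is that the isomorphism $H^*(X)\cong H^*(Z)$ in the relevant range is an isomorphism of algebras (not just vector spaces), which it is since $j^*$ is a ring map — and then the comparison of minimal models is purely algebraic and identical to \cite{FM}. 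A secondary subtlety is ensuring that $Z$, as a suborbifold, has a well-behaved rational homotopy theory; here I would invoke that $Z = s_k^{-1}(0)$ is itself a symplectic orbifold (by \Cref{thm:existence_intro}) and that the desingularisation/orbifold-de-Rham formalism of \cite{BBFMT} applies to it, so there is no loss in working with orbifold minimal models throughout.

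Beyond these, the remaining verifications are routine: naturality of minimal models, universal coefficients, and the bookkeeping of degrees. So the essential content is: (i) upgrade homology Lefschetz to a ring isomorphism on cohomology in the stated range, and (ii) feed this into the purely algebraic $s$-formality comparison of \cite{FM}, checking that nothing in that argument used smoothness of the ambient space in an essential way rather than just the cohomological input.
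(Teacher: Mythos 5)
Your proposal is correct and follows essentially the same route as the paper: upgrade the homological Lefschetz hyperplane theorem to a cohomological one over $\RR$, observe that the inclusion $j\colon Z\hookrightarrow X$ is a cohomology $(n-1)$-equivalence, feed this into the minimal-model comparison lemma of \cite{FM} (the paper's Proposition~\ref{equivalence1}, valid verbatim for orbifolds), transport the $s$-formality splitting along the resulting isomorphism of generators in degrees $\leq n-2$, and finally invoke the criterion that $(n-2)$-formality of the $(2n-2)$-dimensional orbifold $Z$ implies full formality (the paper's Theorem~\ref{criterio2}). One small imprecision: in your discussion of the closed-implies-exact check, exactness is required in the full minimal model $\bigwedge V_Z$, not within $\bigwedge V_Z^{\leq s}$; the paper's argument pushes the exactness from the ambient model through $F$, which is the right mechanism.
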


As mentioned to us by Steven Zelditch, a last property of the sequence of sections $s_k$ constructed in \cite{Do96}, and that also holds in the orbifold setting of \Cref{thm:existence_intro} with analogous proof (here omitted), is the fact that their zero sets get more and more ``equidistributed'' in the ambient manifold as $k$ increases.
This property can be precisely formulated in terms of currents; see \cite[Proposition 40]{Do96} and \cite[Theorem 7]{Sen06}.

\subsection*{Outline}
\Cref{sec:orbifolds} recalls all the definitions and properties of orbifolds which we will need in the sections afterwards.
More precisely, both the classical and groupoidal approaches are utilized, the first being more geometrical and intuitive in nature, and the latter being better suited to the description of orbifold stratification that is needed for the induction argument in the proof of the main result.

In \Cref{s:geo} we recall the needed topological/geometric notions on orbifolds, such as fundamental group, differential forms, almost complex structures, and (orbifold) complex line bundles.

\Cref{s:lattice,sec:donaldson} together contain the proof of \Cref{thm:existence_intro} on the existence of asymptotically holomorphic sequence of sections which are quantitatively transverse to the zero section.
More precisely, \Cref{s:lattice} explains how to find a lattice of points in the ambient orbifolds. 
\Cref{sec:donaldson} then shows how the asymptotically holomorphic local peak sections around each of these points can be perturbed to globally achieve quantitative transversality.
At the end of the section, we also point out the obvious application of \Cref{thm:existence_intro} to the case of finite actions by symplectomorphisms on symplectic manifolds.

Lastly, \Cref{sec:lefschetz,sec:hard_lefschetz_property,sec:formality} contain the applications of \Cref{thm:existence_intro}, namely the proofs
of \Cref{thm:homology_lefschetz_hyperplane_intro} on the homology version of the Lefschetz hyperplane theorem for Donaldson submanifolds, 
(a more detailed version of) \Cref{hardLefschetzDonaldson_intro} concerning their hard Lefschetz property, as well as \Cref{thm:formality} on their formality properties.

\subsection*{Acknowledgements}
The authors are very grateful to Fran Presas for useful discussions concerning the subtleties of the perturbation near the isotropy locus.
We also thank warmly to Nieves Alamo for her contribution in the initial steps of this paper.
Lastly, we would also like to thank Steven  Zelditch for useful feedback on the first version of the preprint, concerning \Cref{rmk:almost_hol} and the equidistribution property for the zero sets of the sections.
The first author is supported by the European Research Council (ERC) under the European Union’s Horizon 2020 research and innovation programme (grant agreement No. 772479).
The second author is partially supported by Project MINECO (Spain) PID2020-118452GB-I00.

\section{Orbifolds}\label{sec:orbifolds}

\subsection{Geometric orbifolds}
Let $X$ be a topological space and $n\,>\,0$.
An {\em orbifold chart} $(U, {\widetilde U}, H, \varphi)$ on $X$
consists of an open set $U \,\subset\, X$, a connected and open set
${\widetilde U} \,\subset\, \RR^n$, a finite group $H< \OO(n)$ acting linearly
on $\widetilde{U}$, and a continuous map
$$
\varphi\,\colon\,\widetilde{U}\,\longrightarrow\,U,
$$
which is  $H$-invariant
(that is $\varphi\,=\,\varphi\circ h$, for all $h \in H$) and such that it induces a homeomorphism
$$
{\widetilde U}/H \stackrel{\cong}{\longrightarrow} U.
$$

\begin{definition}\label{def:geo_orbifold}
An \emph{orbifold} $X$ of dimension $n$, is a Hausdorff, paracompact topological
 space endowed with an equivalence class of orbifold atlases. Here, an {\em orbifold atlas} $\SA=\{(U_i, {\widetilde U}_i, H_i, \varphi_i)\}$ is a family of orbifold charts with $X=\bigcup U_i$, and such that
 \begin{enumerate}
 \item[i)] if $(U_i, {\widetilde U}_i, H_i, \varphi_i)$ and $(U_j, {\widetilde U}_j, H_j, \varphi_j)$
 are two orbifold charts, with $U_i\,\cap\,U_j\,\not=\emptyset$, then for each point $p\,\in\,U_i\,\cap\,U_j$ there exists
an orbifold chart $(U_k, {\widetilde U}_k, H_k, \varphi_k)$ such that $p\,\in\,U_k\,\subset\,U_i\,\cap\,U_j$;
 \item[ii)] if $(U_i, {\widetilde U}_i, H_i, \varphi_i)$ and $(U_j, {\widetilde U}_j, H_j, \varphi_j)$
 are two orbifold charts, with $U_i\,\subset\,U_j$, then there exists a smooth open embedding, called {\em change of charts}
 $\rho_{ij}\colon{\widetilde U}_i\rightarrow{\widetilde U}_j$ such that $ \varphi_i\,=\,\varphi_j\,\circ\,\rho_{ij}$.
 \end{enumerate}
 Two orbifold atlases are moreover \emph{equivalent} if their union is also an orbifold atlas.
\end{definition}

Let $X$ be an orbifold, and $p\in X$. Consider $(U_i, {\widetilde U}_i, H_i, \varphi_i)$ an orbifold
chart around $p$, that is $p= \varphi_{i}(x)\in U_i$ with $x\,\in\,{\widetilde U}_i$,
and denote by ${H_i}(x)\,\subset H_i$ the isotropy
subgroup for the point $x$. ${H_i}(x)$ does not depend
on the choice of the orbifold chart around $p$. The group ${H_i}(x)$ is called  the
{\em isotropy group} of $p$, and it is denoted by $H_p$. When $H_p$ is not trivial, the point $p$ is said to
be a {\em singular} point of the orbifold $X$.
The points $p$ with $H_p$ trivial are called {\em regular} points.
The set of singular points
$$\sing(X):=\{p \in X\,\vert\, H_p \text{ is not trivial}\}$$
is called the {\em singular set} of the orbifold $X$.
Then $X\setminus \sing(X)$ is a smooth $n$-dimensional manifold.

\begin{remark}\label{rem:2.2}
    By definition, the local action in the orbifold charts is effective, as the local group is a subgroup of $\OO(n)$.
    For this reason, these are also called \emph{effective} or \emph{reduced}, orbifolds. 
    The groupoidal approach to orbifolds as in \cite{Adem} 
    (see Section \ref{ss:groupoid} below) allows also to study unreduced orbifolds, where $\sing(X)$ can be the whole of $X$.
    There is a standard procedure to obtain a canonical effective orbifold from a general (possibly unreduced) orbifold \cite[Section 7.2]{polyfold}. 
\end{remark}

In view of \Cref{rem:2.2}, when the distinction needs to be made, we 
call \emph{geometric orbifolds} the ones in Definition \ref{def:geo_orbifold}, and \emph{groupoidal orbifolds} the ones in 
Section \ref{ss:groupoid}, although they are equivalent in the effective case (see \cite[Proposition 1.44]{Adem}). We will formulate various geometric structures on orbifolds in
Section \ref{s:geo}, using Definition \ref{def:geo_orbifold}, but use the groupoidal language to discuss the stratification in 
Section \ref{ss:stratification}.

\begin{definition}[\cite{BG}] \label{def:orbimap}
Let $X,Y$ be two orbifolds 
¡and let $\{(U_i, {\widetilde U}_i, H_i, \varphi_i)\}$ and $\{(V_j, {\widetilde V}_j, K_j, \psi_j)\}$ be
atlases for $X$ and $Y$, respectively.
A map $f\colon X \rightarrow Y$ is said to be an {\em orbifold map} 
if $f$ is a continuous map between the underlying topological spaces, and
for every point $p\in X$ there are orbifold charts $(U_i, {\widetilde U}_i, H_i, \varphi_i)$
and $(V_i, {\widetilde V}_i, K_i, \psi_i)$ around $p$ and $f(p)$ respectively, with $f(U_i)\subset V_i$, a
differentiable map ${\widetilde f}_i\colon {\widetilde U}_i\rightarrow {\widetilde V}_i$, and a
homomorphism $\varpi_i:H_i\to K_i$ such that ${\widetilde f}_i \circ h=\varpi_i(h)\circ
{\widetilde f}_i$ for all $h\in H_i$, and
$$
f_{|\,U_{i}}\circ\varphi_i=\psi_i\circ{\widetilde f}_i.
$$
Moreover, we suppose that 
every map ${\widetilde f}_i$ is
{\em compatible with the changes of charts}:
\begin{enumerate}
\item[i)] if $\rho_{ij}\colon{\widetilde U}_i\,\rightarrow\,{\widetilde U}_j$ is a change of charts
around $p$, then there is a change of charts $\mu(\rho_{ij})\colon{\widetilde V}_i\,\rightarrow\,{\widetilde V}_j$ around $f(p)$ such that
${\widetilde f}_j\circ\rho_{ij}\,=\,\mu(\rho_{ij})\circ{\widetilde f}_i$, and
\item[ii)]  if $\rho_{ki}\colon{\widetilde U}_k\,\rightarrow\,{\widetilde U}_i$ is a change of charts
around $p$, then $\mu(\rho_{ij}\circ \rho_{ki})\,=\, \mu(\rho_{ij})\circ \mu(\rho_{ki})$.
\end{enumerate}
Therefore, an orbifold map  $f\colon X\rightarrow Y$ is determined by a smooth map
${\widetilde f}_i\colon{\widetilde U}_i\rightarrow{\widetilde V}_i$, for every orbifold chart   $(U_i, {\widetilde U}_i, H_i, \varphi_i)$
on $X$, such that every
${\widetilde f}_i$ is $H_i$-equivariant and compatible with the change of orbifold charts.
\end{definition}

Observe that the composition of orbifold maps is an orbifold map. Moreover,  an orbifold map $f\colon X\rightarrow Y$  induces a homomorphism
from $H_{p}$ to $K_{f(p)}$.

\begin{remark}
Notice that the notion of orbifold map defined in \Cref{def:orbimap} corresponds to the notion of \emph{good maps} in \cite{CR01}, which is equivalent (at least for effective orbifolds) to generalized maps (generalizing functors) defined using groupoids in \cite{MP,M02} by \cite{lupercio2004gerbes}. This notion of maps is sufficient to talk about many geometric constructions, e.g.\ the pullback of orbifold vector bundles \cite[Section 5.1]{M02}. Moreover, the set of orbifold maps of certain regularity (e.g.\ $C^k$, $W^{k,p}$) can be endowed with a topology, such that it becomes a Banach orbifold, see \cite{Chen06,gironella2021exact}.
\end{remark}

\subsection{Groupoidal orbifolds}\label{ss:groupoid}

We now describe a natural stratification of an orbifold coming from the different (isomorphism types of) isotropy groups of each point, which 
 will be needed in the proof of the main result of this paper. The closure of each stratum is not typically a suborbifold or an embedded orbifold, but rather the image of immersed orbifolds.
We describe this using the groupoid point of view for orbifolds. To this end, we start by recalling the basic notions needed for our purposes, and we refer the interested reader to the more comprehensive references \cite{Adem,M02} for details and additional explanations.

\begin{definition}
\label{def:groupoid}
	A \emph{proper \'etale  Lie groupoid} $\CrC$ is a groupoid (i.e.\ a small category where every arrow is an isomorphism) with $\Obj(\CrC)=C_0$ and $\Mor(\CrC)=C_1$, such that:
	\begin{enumerate}
		\item $C_0,C_1$ are both Hausdorff spaces locally modeled on $\RR^n$ with smooth transition maps.
		\item (\'Etale) The source and target maps $s,t:C_1\to C_0$ are local diffeomorphisms.
		\item The inverse map $i:C_1\to C_1$, unit map $u:C_0\to C_1$ and multiplication $m:C_1\tensor[_s]{\times}{_t} C_1 \to C_1$ are smooth. 
		Recall that $C_1\tensor[_s]{\times}{_t} C_1 := \{(\phi,\psi)\in C_1\times C_1\vert s(\phi)=t(\psi)\}$.
		\item (Proper) $(s,t):C_1\to C_0 \times C_0$ is a proper map. 
	\end{enumerate}
\end{definition}

Given a proper \'etale Lie groupoid $\CrC$, the \emph{orbit set} $|\CrC|=C_0/C_1$, i.e.\ the set of equivalence classes with the equivalence relation $x\sim y$ if
$\phi(x)=y$ for a $\phi\in C_1$, is equipped with the quotient topology and is a Hausdorff space.
Moreover, for a point $x\in C_0$, the \emph{isotropy group} of $x$ is given by $C_x=\{g\in C_1\vert s(g)=t(g)=x\}$. Moreover, $C_x$ is isomorphic to $C_y$ if $x\sim y$, i.e.\ the isotropy group $C_p$ for $p\in \vert \CrC \vert$ is well-defined.

A functor between \'etale proper Lie groupoids is called \emph{smooth} if it is smooth both on the object and morphism levels. 
An \emph{equivalence} from $\CrC$ to another proper \'etale Lie groupoid $\CrD$ is a fully faithful functor $\phi$ that is a local diffeomorphism on the object level and such that $|\phi|:|\CrC|\to \vert\CrD\vert$ is a homeomorphism. 
In what follows, we will also denote by $\phi_0$ and $\phi_1$ the functor $\phi$ at the level of objects and morphisms respectively.
Lastly, if there is a diagram of smooth equivalences
 \[ 
\CrC \stackrel{\sim}{\leftarrow}\CrE \stackrel{\sim}{\to }\CrD,
 \]
then $\CrC$ and $\CrD$ are called \emph{Morita equivalent}. 

\begin{definition}
\label{defn:orbifold}
	An \emph{orbifold structure} $(\CrC,\alpha)$ on a paracompact Hausdorff space $X$ is a proper \'etale Lie groupoid $\CrC$ with a homeomorphism $\alpha\colon|\CrC|\to X$. 
	Two orbifold structures $(\CrC,\alpha),(\CrD,\beta)$ are said to be \emph{equivalent} if there is a Morita equivalence $\CrC\stackrel{\mathfrak{f}}{\to} \CrD$ such that $\alpha=\beta\circ |\mathfrak{f}|$.
	An \emph{orbifold} $X$ is a paracompact Hausdorff space $X$ equipped with an equivalence class of orbifold structures $(\CrC,\alpha)$.
\end{definition}

\begin{ex}
Given an action of a finite group $G$ on $M$, the \emph{translation groupoid} $G\ltimes M$ is defined as follows.
The set of objects is $M$, and the set of morphisms is $G\times M$.
The source and target maps are $s,t: G\times M \to M$, $s(g,m)=m$ and $t(g,m)=g\cdot m$, where $g\cdot m$ denotes the action of $g\in G$ on $m\in M$. If $G$ acts on $M$ effectively, then we can view the quotient $M/G$ as a geometric orbifold, or equivalently, we can view the translation groupoid $G\ltimes M$ as the orbifold structure equipped on the quotient space $M/G$. Since a geometric orbifold is locally modeled on such quotient spaces, i.e.\ the charts, we have the analogous structure for the groupoidal description of orbifolds in the following definition.
\end{ex}

\begin{definition} \label{defn:loc_unif}
	Let $\CrC$ be a proper \'etale Lie groupoid and $x\in C_0$. 
	A \emph{local chart/uniformizer} around $x$ is a smooth and fully faithful functor 
	\[\Psi_x: C_x\ltimes U_x\to \CrC,\]
	with $U_x\subset C_0$ a neighborhood of $x$ and $C_x\subset C_1$ the isotropy group of $x$, such that the following holds:
	\begin{enumerate}
		\item\label{inclusion} On the objects level, $\Psi_x$ is the inclusion $U_x\to C_0$,
		\item\label{orbit_homeo} $|\Psi_x|:U_x/C_x\to |\CrC|$ is a homeomorphism onto an open subset of $|\CrC|$.
	\end{enumerate}
\end{definition}

For any groupoidal orbifold and any point, we can always find a local uniformizer around that point \cite[Proposition 7.1.19]{polyfold}. However, the $C_x$ action on $U_x$ does not have to be effective. We will say that an orbifold $X$ is \emph{locally modeled} on $U_x/C_x$ if $X$ is represented by a proper \'etale Lie groupoid $\CrC$ and $C_x\ltimes U_x$ is a local chart around $x\in C_0$ for $\CrC$.

Due to the additional data of isotropy groups, there are different notions of suborbifolds with different requirements on the compatibility of isotropy groups, c.f.\ \cite[Definition 7.1.21]{polyfold} and \cite[Definition 2.3]{Adem}, see also \cite{BorBru15} for the comparison. As it turns out, our stratification of orbifolds is induced from orbifold embeddings/immersions defined as follows.

\begin{definition}[{\cite[Definition 2.3]{Adem}}]\label{def:orbifold_embedding}
A smooth functor $\phi:\CrC\to \CrD$ between étale Lie groupoids is said to induce a \emph{(proper) embedding} if the following holds:
\begin{enumerate}
    \item\label{item1:embedding} $\phi_0:C_0\to D_0$ is an immersion.
    \item\label{orbifold_embedding} Let $y\in \im (\phi_0)$ and $D_y\ltimes V_y$ a local uniformizer around $y$. 
    Then the $\CrC$-action on $\phi_0^{-1}(y)$ is transitive, and there exists an open neighborhood $U_x\subset C_0$ of every $x\in \phi^{-1}_0(y)$ such that $\CrC|_{U_x}=C_x\ltimes U_x$, 
    $C_x$ is mapped injectively into $D_y$ by $\phi_1$, and
    $$\CrC|_{\phi^{-1}_0(V_y)}\simeq D_y\ltimes (D_y\times U_x)/C_x,$$
    where $\simeq$ stands for Morita equivalence.
    \item\label{item3:embedding} $|\phi|:|\CrC|\to |\CrD|$ is proper. 
\end{enumerate}
A smooth functor $\phi:\CrC\to \CrD$ is called \emph{a (proper) immersion} if \eqref{item1:embedding} and \eqref{item3:embedding} hold as above, and \eqref{orbifold_embedding} holds locally on $\CrC$.
\end{definition}

\subsection{A stratification on orbifolds}\label{ss:stratification}

Let now $\CrC$ be a proper \'etale Lie groupoid and $H$ a finite group.
We define the translation groupoid $\CrC_H$ 
as follows. The set of objects $(C_H)_0$ consists of pairs $(x,K)$, where $x \in C_0$ and $K<C_x$ is a subgroup which is isomorphic to $H$. 
A morphism from $(x,K)$ to $(x',K')$ consists of a morphism $g: x \to x'$ in $C_1$ such that $K'=g K g^{-1}$ as a subgroup of the isotropy group at $x'=gx$.

In other words, $\CrC_H$ is a groupoid describing \emph{representable} maps (in the sense of \cite[Definition 2.44]{Adem}, i.e.\ inducing injective maps between stabilizers) from $\bullet/H$ to $\CrC$, modulo the equivalences from the automorphisms (reparameterizations) of $H$, i.e.\ this can be viewed as the \emph{space of $H$-points}. Notice that when $H=\{1\}$, then simply $X_{\{1\}}=X$. The following shows that $\CrC_H$ is a proper \'etale Lie groupoid.

\begin{proposition}\label{prop:C_H_orbifold}
$\CrC_H$ is a proper \'etale Lie groupoid. 
More precisely, if $\CrC$ has local chart on $G\ltimes \RR^n$ at a point $x\in C_0$, for some representation $G\to \OO(n)$, then near a point $(x,K)$, $K\cong H$, in the set of objects, $\CrC_H$ has local chart $N_G(K)\ltimes \RR^n_K$, where $N_G(K)$ is the normalizer of 
$K<G$ and $\R^n_K$ is the subspace 
fixed by $K$. Moreover, the natural map $\CrC_H\to \CrC$ is an immersion. 
\end{proposition}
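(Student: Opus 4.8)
The plan is to verify the groupoid axioms for $\CrC_H$ by working locally, reducing everything to the model case $\CrC = G \ltimes \RR^n$, and then patching. So the first step is to establish the claimed local structure: fix $(x,K) \in (C_H)_0$ and a local uniformizer $\Psi_x \colon C_x \ltimes U_x \to \CrC$ with $U_x \cong \RR^n$ and $G := C_x \to \OO(n)$ a (not necessarily effective) representation; we may shrink so that $U_x$ is $G$-invariant and convex. I claim that the objects of $\CrC_H$ lying near $(x,K)$ are exactly pairs $(y, K')$ with $y \in U_x$, $K'$ a subgroup of $C_y$ conjugate in $C_1$ to $K$, and that after the identification via $\Psi_x$ we may take $K' < G$ directly with $K' = g K g^{-1}$ for some $g \in G$. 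Quotienting by the conjugation/reparameterization action and fixing the representative $K$ itself, the remaining freedom is exactly $N_G(K)$, and the locus of $y$ for which $K$ actually fixes $y$ (so that $K < C_y$ makes sense as a subgroup) is the fixed subspace $\RR^n_K = (\RR^n)^K$, which is a linear subspace and hence again modeled on some $\RR^m$. This gives the local chart $N_G(K) \ltimes \RR^n_K$.

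The second step is to check that, with these local charts, $\CrC_H$ is a proper étale Lie groupoid, i.e.\ the four axioms of \Cref{def:groupoid}. The object and morphism spaces are Hausdorff and locally Euclidean by the local chart description just obtained (the morphism space is locally $N_G(K) \times \RR^n_K$), and the transition maps inherit smoothness from those of $\CrC$ together with the fact that taking fixed subspaces is compatible with the linear change-of-chart maps. The source/target maps are local diffeomorphisms because, in the local model $N_G(K) \ltimes \RR^n_K$, they are the projection and the (linear, hence smooth) action map, each of which is a local diffeomorphism; this also gives smoothness of inverse, unit and multiplication, all of which are read off from the translation groupoid structure. Properness of $(s,t) \colon (C_H)_1 \to (C_H)_0 \times (C_H)_0$: this follows from properness of $(s,t)$ for $\CrC$ by observing that the natural forgetful map $(x,K) \mapsto x$ intertwines the two, that the fibers of $\CrC_H \to \CrC$ over a point are finite (subgroups of the finite group $C_x$), and that a preimage of a compact set is closed inside a compact set. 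One must be a little careful to phrase properness without assuming the forgetful map is itself proper, but restricting to compact sets and using that $C_1$ acts with finite stabilizers on the finite set of conjugates of $K$ makes this routine.

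The third step is to identify the natural forgetful functor $\CrC_H \to \CrC$, $(x,K) \mapsto x$ (and on morphisms $g \mapsto g$), and show it is an immersion in the sense of \Cref{def:orbifold_embedding}: condition \eqref{item1:embedding} holds since in the local model it is the inclusion $\RR^n_K \hookrightarrow \RR^n$, a linear injective immersion; condition \eqref{orbifold_embedding} needs to be checked only locally for an immersion, and locally it reads as the standard statement that $N_G(K) \ltimes \RR^n_K$ sits inside $G \ltimes \RR^n$ as the fixed-point-set subgroupoid — this is exactly the Morita-local picture $\CrC|_{\phi_0^{-1}(V_y)} \simeq D_y \ltimes (D_y \times U_x)/C_x$ with $D_y = G$, $C_x = N_G(K)$ (after relabeling), $U_x = \RR^n_K$; and \eqref{item3:embedding} (properness of $|\phi|$) follows from the properness established in the previous step.

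The main obstacle I expect is not any single axiom but rather the bookkeeping in the first step: making precise, in the groupoid language, exactly which $H$-points are "near" $(x,K)$, and checking that the reparameterization action of $\operatorname{Aut}(H)$ together with the $C_1$-conjugation action combine to leave precisely the $N_G(K)$-action on $\RR^n_K$. In particular one must confirm that two distinct subgroups $K, K' < G$ that happen to be $G$-conjugate do not get "glued twice" and that $N_G(K)$ (as opposed to, say, the full centralizer or the full stabilizer of $K$ in some larger sense) is the correct residual group — this is where the equivalence "$\CrC_H$ describes representable maps $\bullet/H \to \CrC$ modulo reparameterization" is doing real work and must be used carefully. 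Once the local model is pinned down, the verification of the groupoid axioms and of the immersion property is a direct, if slightly tedious, unwinding of definitions.
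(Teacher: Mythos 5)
Your approach matches the paper's: describe the local chart on objects as $\RR^n_K$ (via $x'\mapsto(x',K)$, keeping the \emph{same} group $K$), observe that the isotropy of $(x,K)$ in $\CrC_H$ is exactly $N_G(K)$, and conclude that $\CrC_H$ is locally $N_G(K)\ltimes\RR^n_K$, with the immersion to $\CrC$ given by the local maps $N_G(K)\ltimes\RR^n_K\to G\ltimes\RR^n$. One small imprecision worth flagging, which you yourself anticipate: the objects of $(C_H)_0$ lying near $(x,K)$ in the manifold topology are only the pairs $(y,K)$ with $y\in\RR^n_K$ --- the conjugates $(y,gKg^{-1})$ for $g\notin N_G(K)$ are \emph{distinct} objects, related by morphisms but not in the same chart --- so $N_G(K)$ is most cleanly obtained directly as the isotropy group at $(x,K)$, rather than as a residual group after a ``quotienting'' step; the paper's direct construction of the chart $\phi_{(x,K)}$ sidesteps the bookkeeping you worry about.
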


\begin{proof}
We first describe the manifold structure on $(C_H)_0$ near $(x,K)$, i.e.\ a local chart 
\[
\phi_{(x,K)}\colon \RR^n_K \to (C_H)_0\, ,
\]
whose image contains $(x,K)$.
For $x'$ in the local chart $\RR^n$ of $C_0$ centered at $x$ as in the statement, there is a natural inclusion of the isotropy $C_{x'}$ into  $C_x=G$.
Moreover, if $x'\in \RR^n_K$, then  the inclusion $K\subset C_x$ factors as $K\subset C_{x'}\subset C_x$.
Then, we can define the map $\phi_{(x,K)}\colon \RR^n_K \to (C_H)_0$ as $\phi_{(x,K)} (x') = (x',K)$, where $K$ is seen as subgroup of $C_{x'}$.

The collection of these charts can be easily seen to give to $(C_H)_0$ the structure of a smooth manifold.
As $\CrC_H$ is by definition just the translation groupoid associated to the action of the étale Lie groupoid $\CrC$ on $\CrC_H$, it is then an étale Lie groupoid.
Now, it is clear that the isotropy of a point $(y,K\subset C_y)$ in $\CrC_H$ is just given by those $g\in C_y$ such that $gKg^{-1}=K$.
It follows that the étale proper Lie groupoid $\CrC_H$ is locally modeled near $(x,K)$ on $N_G(K)\ltimes \RR^n_K$ as desired.

Lastly, the natural immersion $\CrC_H\to \CrC$ is given by the union of local embeddings $N_G(K)\ltimes \R^n_K \to G\ltimes \R^n$. 
\end{proof}

Given an orbifold $X$, we denote by $X_H$ the orbifold modeled on $\CrC_H$ for any orbifold structure $\CrC$ of $X$. 
It is straightforward to check this definition is independent of the orbifold structure up to Morita equivalence. 
Notice also that, via the immersion $\CrC_H\to \CrC$, a symplectic form on $X$ pulls-back to one on $X_H$.

We say that a connected component $\tau$ of $\vert \CrC_H\vert$ is \emph{effective} if it contains a point of the form
$\vert(x,G_x)\vert$, i.e.\ a point where the subgroup of the isotropy group is in fact the whole isotropy.

\begin{proposition}\label{prop:opendense}
Let $\tau\subset X_H$ be an effective component.
Then, there is an open dense subset of $\tau$ made of 
points of the form $\vert(x,C_x)\vert$.
\end{proposition}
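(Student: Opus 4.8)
The plan is to reduce to the local charts furnished by \Cref{prop:C_H_orbifold} and then globalize by a connectedness argument. Write $F\subseteq X_H=\vert\CrC_H\vert$ for the set of points of the form $\vert(x,C_x)\vert$, i.e.\ those classes $\vert(x,K)\vert$ for which the chosen subgroup $K$ is already the full isotropy group $C_x$. By definition the effective components are precisely the connected components $\tau$ with $F\cap\tau\neq\emptyset$, so we must produce an open dense subset of $\tau$ contained in $F\cap\tau$. (Here we use that $X_H$ is locally homeomorphic to the $\RR^n_K/N_G(K)$, hence locally connected, so its connected components are open.)

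First I would record the local picture. Around a point $(x,K)$ with $K\cong H$, \Cref{prop:C_H_orbifold} provides a chart $N_G(K)\ltimes\RR^n_K$, where $G=C_x$ acts linearly on $\RR^n$ and $\RR^n_K=\mathrm{Fix}(K)$; moreover for $x'\in\RR^n_K$ one has $K\subseteq C_{x'}\subseteq G$. Thus $\vert(x',K)\vert\in F$ exactly when $C_{x'}=K$, i.e.\ when no element of $G\setminus K$ fixes $x'$, so the non-full points of the chart form the set $\bigcup_{g\in G\setminus K}\bigl(\RR^n_K\cap\mathrm{Fix}(g)\bigr)$, a finite union of linear subspaces of $\RR^n_K$. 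This yields a dichotomy: if some $g\in G\setminus K$ fixes $\RR^n_K$ pointwise then this union is all of $\RR^n_K$ and the chart contains no point of $F$; otherwise each subspace in the union is proper, hence the union is nowhere dense and $F$ is open and dense in the chart. Passing to the quotient by the finite group $N_G(K)$ (an open map) preserves openness and density, so the same alternative holds chartwise in $X_H$.

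Then I would observe that $F$ itself is open: if $\vert(y,K)\vert\in F$ then $G=C_y=K$, so the chart around $(y,K)$ is $N_K(K)\ltimes\RR^n_K=K\ltimes\mathrm{Fix}(K)$ with $K$ acting trivially, and every point of it has isotropy exactly $K$, hence lies in $F$. Combining this with the dichotomy shows that $\overline{F}$ is open as well: any chart $U$ meeting $\overline{F}$ must meet $F$, hence is a chart containing a full point, hence has $F$ dense in it, so $U\subseteq\overline{F}$. Therefore $\overline{F}$ is clopen in $X_H$, and thus a union of connected components.

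Finally, if $\tau$ is an effective component then $\tau\cap F\neq\emptyset$; since $\overline{F}$ is clopen and $\tau$ is connected this forces $\tau\subseteq\overline{F}$, and then a short point-set argument using that $\tau$ is open and closed gives $\overline{F\cap\tau}=\tau$. Hence $F\cap\tau$ is dense in $\tau$, and it is open by the previous step, so it is the desired open dense subset. The only point requiring care is the local dichotomy — in particular keeping track of the (a priori noneffective) $G$-action and the passage through the quotient by $N_G(K)$ — but this is elementary linear algebra, and the remaining globalization is a routine clopen argument, so I do not anticipate a genuine obstacle.
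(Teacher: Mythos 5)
Your proof is correct and follows essentially the same strategy as the paper's: the heart of both is the local dichotomy that, in a chart $N_G(K)\ltimes\RR^n_K$, the non-full points $\bigcup_{g\in G\setminus K}\bigl(\mathrm{Fix}(g)\cap\RR^n_K\bigr)$ are either all of $\RR^n_K$ or a finite union of proper subspaces (hence nowhere dense), combined with a connectedness/clopen argument. The only cosmetic difference is that you show $\overline{F}$ is clopen while the paper shows its complement, $\mathrm{int}(F^c)$, is clopen — these are the same argument stated dually.
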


\begin{proof}
Let $\vert(x,C_x)\vert$ be a point in $\tau$.
Then, by construction of the local chart $\phi_{(x,K)}$ in the proof of \Cref{prop:C_H_orbifold}, in a neighborhood of $(x,C_x)$ inside $\tau$ there are points $(y,C_y=C_x)$.
In particular, the set of points of the type $(z,K=C_z)$ is an open and non-empty subset of $\tau$.

In order to prove density, let
$(x',K')$ be a point of $\tau$  with $K'$ a strict subgroup of $C_{x'}$.
Assume that $X$ and $\tau$ are locally modeled on $\RR^n/G$ and $\RR^n_K/N_G(K)$ respectively, for some $K\subsetneq G$, $K\cong H$. 
The set of points $y$ in $\RR^n_K$ such that 
$K$ is \emph{not} the whole isotropy subgroup of $y$ is just  
 $$
 \left\{\,y\in \RR^n_K \;\left|\; \exists g\in G\setminus K, \, gy=y \,\right.\right\}=\bigcup_{g\in G\setminus K} \RR^n_{\langle g\rangle}\cap \RR^n_K\, ,
 $$
which is either $\RR^n_K$ or a closed subset without interior points. 

Now consider the set of points 
$(x',K')$
which admit an open neighborhood such that every point in it has 
group strictly smaller than the isotropy subgroup.
By its very definition, this set is open.
Moreover, the discussion in the previous paragraph also implies that it is also closed. 
Hence, if it was non-empty, it would be the whole connected component $\tau$. 
In other words, such set needs to be empty, thus proving the desired statement.
\end{proof}

Let $\CrS(X)$ denote the set of all effective connected components $\tau$ of $X_H$ for all $H$. 
We use $H_{\tau}$ to denote the underlying group $H$ for the component $\tau$, and $X_{\tau}$ to denote the image of the natural immersion $\tau \to X$. 
Then we define a relation on $\CrS(X)$ as follows: $\tau \le \xi$ if $X_{\tau} \subset X_{\xi}$.  

\begin{ex}\label{ex:XH_stratification}
Let $X$ be the quotient orbifold $\C^n/G$ for $G < \UU(n)$. 
Given a vector subspace $V\subset\CC^n$, we use $G_{V}$ to denote the maximal subgroup of $G$ that fixes $V$.
Let now $H$ be a subgroup of $G$ such that $G_{\C^n_H}=H$. 
Then $\CrS(X)$ is the set of connected components of the orbifold $X_H$ coming from the ones of the representing \'etale Lie groupoid $\CrC_H$ described in the proof of \Cref{prop:C_H_orbifold}, for all such $H$.
\end{ex}

\begin{proposition}\label{prop:po}
For a compact orbifold $X$, $(\CrS(X),\le )$ is a finite poset. 
\end{proposition}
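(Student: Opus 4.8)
The plan is to prove that $(\CrS(X),\le)$ is a finite poset in two parts: first that it is a poset (reflexive, antisymmetric, transitive), and second that it is finite, using compactness of $X$.

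\medskip

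\textbf{Poset structure.} First I would verify that $\le$ is a partial order on $\CrS(X)$. Reflexivity ($\tau\le\tau$) and transitivity are immediate from the definition $\tau\le\xi\iff X_\tau\subset X_\xi$, since these properties are inherited from set inclusion of the subsets $X_\tau\subset X$. The only genuine point is antisymmetry: if $X_\tau\subset X_\xi$ and $X_\xi\subset X_\tau$, i.e.\ $X_\tau=X_\xi$ as subsets of $X$, then $\tau=\xi$ in $\CrS(X)$. For this I would argue as follows. By \Cref{prop:opendense}, the effective component $\tau\subset X_{H_\tau}$ contains an open dense set of points of the form $\vert(x,C_x)\vert$; at such a point the isotropy group $C_x$ is abstractly isomorphic to $H_\tau$. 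The same holds for $\xi$ with $H_\xi$. Since $X_\tau=X_\xi$, a generic point $p=\alpha(\vert(x,C_x)\vert)$ in the common image has its full isotropy group $H_p\cong C_x$ realized as a subgroup of type $H_\tau$ via $\tau$ and of type $H_\xi$ via $\xi$; taking $K=C_x$ itself forces $H_\tau\cong H_p\cong H_\xi$. Identifying the groups, both $\tau$ and $\xi$ then correspond to connected components of $X_{H}$ (with $H\cong H_\tau\cong H_\xi$) whose images in $X$ coincide and contain a common point $(x,C_x)$ in the object space of $\CrC_H$; since the natural map $\CrC_H\to\CrC$ is an immersion (\Cref{prop:C_H_orbifold}), $\tau$ and $\xi$ are the connected components of $|\CrC_H|$ through $\vert(x,C_x)\vert$, hence equal. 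Thus $\tau=\xi$, and $\le$ is antisymmetric.

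\medskip

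\textbf{Finiteness.} For finiteness I would use compactness of $X$. Cover $X$ by finitely many orbifold charts $\varphi_i\colon\widetilde U_i\to U_i$ with finite structure groups $H_i<\OO(n)$, $i=1,\dots,N$; this is possible since $X$ is compact. Each isotropy group $H_p$ of a point $p\in U_i$ is (conjugate to) a subgroup of $H_i$, so the set of isomorphism types of groups $H$ that can possibly occur as $H_\tau$ for $\tau\in\CrS(X)$ is contained in the finite set of subgroups (up to isomorphism) of $H_1,\dots,H_N$; in particular there are only finitely many $H$ for which $X_H$ is nonempty. For each such $H$, I must bound the number of effective connected components of $X_H$. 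By \Cref{prop:C_H_orbifold}, $X_H$ is itself an orbifold locally modeled on $N_{G}(K)\ltimes\RR^n_K$; it is covered by the charts induced from the $\widetilde U_i$, namely by the finitely many sets $(\widetilde U_i)_K^{}=\coprod_{K<H_i,\,K\cong H}(\widetilde U_i)_K$ where $(\widetilde U_i)_K$ is the fixed subspace of $K$, each of which has finitely many connected components (it is a finite union of linear subspaces of $\RR^n$ intersected appropriately). A finite union of sets each with finitely many connected components has finitely many connected components, so $X_H$ has finitely many components, a fortiori finitely many effective ones. Summing over the finitely many relevant $H$, $\CrS(X)$ is finite.

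\medskip

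\textbf{Main obstacle.} The routine parts are reflexivity, transitivity, and the counting argument for finiteness. The step I expect to require the most care is antisymmetry: one must be careful that $X_\tau=X_\xi$ \emph{as subsets of $X$} really does pin down the component $\tau$ uniquely, since the immersion $\CrC_H\to\CrC$ is not injective in general and a priori different components could have the same image. The key input resolving this is \Cref{prop:opendense}, which guarantees that a generic point of $X_\tau$ "remembers" both the isomorphism type $H_\tau$ (as its full isotropy) and — via the local chart description in \Cref{prop:C_H_orbifold} — a well-defined germ of $\tau$ near it; one then needs the (easy but worth stating) observation that the effective component is determined by this local data together with connectedness. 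A secondary subtlety is making sure the local models $\RR^n_K$ used for the finiteness count are genuinely semialgebraic/finite-component sets, which is clear since they are finite unions of linear subspaces.
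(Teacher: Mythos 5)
Your proof is correct and follows essentially the same route as the paper: finiteness from compactness plus the local description of $\CrC_H$, and antisymmetry by using effectiveness to locate a point $(x,C_x)$ that must lie in both components and then invoking the local chart picture of \Cref{prop:C_H_orbifold}. Two small remarks. First, for antisymmetry you do not actually need the full density statement of \Cref{prop:opendense}: the definition of an effective component already provides one point $(x,C_x)$ in $\tau$, and the argument — $p=\alpha(|(x,C_x)|)\in X_\xi$ forces $H_\xi$ into $C_x=H_\tau$, symmetry gives $|H_\tau|=|H_\xi|$, hence the $\xi$-representative over $p$ must also be $(x,C_x)$ — goes through verbatim; this is exactly how the paper later proves \Cref{prop:le_property}(1), of which \Cref{prop:po} is a special case. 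Second, in the finiteness step, the claim that each $\widetilde U_i\cap\RR^n_K$ has finitely many components is not automatic for an arbitrary connected open $\widetilde U_i$; either shrink the covering charts to balls (so the intersection is a single ball in $\RR^n_K$), or observe directly that $|X_H|$ is compact (the immersion $X_H\to X$ is proper and $X$ is compact) and locally connected, hence has finitely many components. With either fix, the argument is clean and matches the paper's.
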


\begin{proof}
From the local description in \Cref{ex:XH_stratification} and the compactness of $X$, we know that $\CrS(X)$ is a finite set. To verify that $\le $ defines a partial order, it suffices to show that, if $\tau\le \xi$ and $\xi \le \tau$, then $\tau=\xi$. 

The assumption implies $X_{\tau} = X_{\xi}$, and, as both $\xi,\tau$ are effective, one gets $H_{\tau}=H_{\xi}$. 
To verify that they are the same, it suffices to prove that $\tau=\xi$ in a local chart near 
$(x,K =C_x)$.
This is clearly the case from the local description in (the proof of) \Cref{prop:C_H_orbifold}.
\end{proof}

Given any orbifold $X$, there is a procedure for constructing an effective orbifold $X_{\rm{R}}$ with the same underlying quotient \cite[Section 7.2]{polyfold}. On the isotropy group level, the construction takes the quotient of the isotropy group $C_x$ at $x$ by the kernel $H_0$ of the group homomorphism $C_x\to \mathrm{Diff}(\RR^n)$ in the local uniformizer near $x$. 
When $X$ is connected, $H_0$ is the same for any $x$; in this case, we denote it by $H_{0,X}$. 
Obviously, when $X$ is effective, $H_{0,X}$ is trivial. 
We point out that, even when the starting orbifold $X$ is effective, the construction of $X_H$ yields non-effective orbifolds.

\begin{proposition}\label{prop:le_property}
The poset $(\CrS(X),\le )$  has the following properties:
\begin{enumerate}
    \item\label{le_1} If $\tau\le\xi$, then $H_{\xi}$ is a subgroup of $H_{\tau}$.  
    If moreover $H_{\xi}=H_{\tau}$, then $\tau=\xi$.
    \item\label{le_2} If $x\in X_{\tau}\cap X_{\xi}$, there exists $\eta$ such that $x\in \eta$, $\eta \le \tau$ and $ \eta\le\xi$.
    \item\label{le_3} Let $x\in X_{\tau}$ such that the immersion $\tau \to X$ fails to be injective over the point $x$.
    Then, there exists $\xi<\tau$ with $x\in X_{\xi}$.
    \item\label{le_4} Assume $X$ is connected. Then $\CrS(X)$ has a unique maximal element $\tau_{\max}=X$ with $H_{\tau_{\max}}=H_{0,X}$. Moreover, $X\backslash \bigcup\limits_{\tau<\tau_{\max}} X_{\tau}$ is a manifold with a trivial $H_{0,X}$-action.
\end{enumerate}
\end{proposition}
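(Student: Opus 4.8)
The plan is to reduce each of the four statements to linear algebra inside the local models of \Cref{prop:C_H_orbifold}. Around a point $|z|\in X$ with isotropy $C_z$, fix a uniformizer $C_z\ltimes\RR^m$; then for a subgroup $K\le C_z$ the component of $X_K$ through $(z,K)$ is locally $N_{C_z}(K)\ltimes\RR^m_K$ and the natural immersion $X_K\to X$ is locally the linear inclusion $\RR^m_K\hookrightarrow\RR^m$, so in the orbit space $\RR^m/C_z$ the image $X_\tau$ of the component $\tau\ni(z,K)$ is locally $\pi_z(\RR^m_K)=\pi_z\bigl(\bigcup_{g\in C_z}\RR^m_{gKg^{-1}}\bigr)$. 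I would first record two facts. \emph{(a)} If $\tau$ is effective then every $(z,K)\in\tau$ has a \emph{saturated} stabilizer, i.e.\ $K=G_{\RR^m_K}$: otherwise some $g\in C_z\setminus K$ fixes all of $\RR^m_K$, so no point of the local chart $\RR^m_K$ of $\tau$ has isotropy equal to $K$, contradicting \Cref{prop:opendense}. \emph{(b)} A linear subspace, or a nonempty relatively open subset of it, is not contained in a finite union of proper linear subspaces; combined with \emph{(a)} this yields, for saturated $K,K'\le C_z$, that $\pi_z(\RR^m_K)\subseteq\pi_z(\RR^m_{K'})$ near $|z|$ exactly when $K'$ is subconjugate to $K$ in $C_z$. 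I will also use that each $X_\tau$ is closed ($\tau\to X$ is proper) and that $\CrS(X)$ is finite (\Cref{prop:po}).

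Part \eqref{le_1} is then immediate: pick an effective $(z,C_z)\in\tau$, so $C_z\cong H_\tau$; since $|z|\in X_\tau\subseteq X_\xi$ it lifts to some $(z,K')\in\xi$ with $K'\le C_z$ and $K'\cong H_\xi$, whence $H_\xi$ embeds in $H_\tau$, and if $|H_\xi|=|H_\tau|$ then $K'=C_z$, so $\tau$ and $\xi$ are components of the same $X_{H_\tau}$ sharing the point $|(z,C_z)|$, hence equal. For \eqref{le_2} and \eqref{le_3} I argue uniformly. In \eqref{le_2}, lift $x\in X_\tau\cap X_\xi$ to $(z,K_1)\in\tau$ and $(z,K_2)\in\xi$ with $K_1,K_2\le C_z$, put $L=\langle K_1,K_2\rangle$ and $\bar L=G_{\RR^m_L}$, and let $\eta$ be the component of $X_{\bar L}$ through $(z,\bar L)$; since $\bar L$ is saturated a generic point of $\RR^m_{\bar L}$ has isotropy exactly $\bar L$, so $\eta$ is effective, $\eta\in\CrS(X)$, and $x\in X_\eta$; and $\bar L\supseteq K_1$ gives $\RR^m_{\bar L}\subseteq\RR^m_{K_1}$, so $X_\eta\subseteq X_\tau$ near $x$, and likewise for $\xi$. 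For \eqref{le_3} I run the same construction with $\tau=\xi$ and two non-conjugate lifts $(z,K_1),(z,K_2)\in\tau$: then $\eta\le\tau$ near $x$, and $\eta\ne\tau$, for $\eta=\tau$ would force $\bar L\cong H_\tau\cong K_1$, hence $\bar L=L=K_1$ (orders!) and $K_2\le K_1$, i.e.\ $K_1=K_2$, contradicting non-conjugacy.

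The main obstacle, and the only nontrivial point in \eqref{le_2}--\eqref{le_3}, is upgrading the local inclusion $X_\eta\subseteq X_\tau$ near $x$ to a \emph{global} one. I would do this by a connectedness argument upstairs on the connected orbifold $\eta$: let $P\subseteq\eta$ be the set of $p$ having a neighbourhood in $\eta$ whose image in $X$ lies in $X_\tau$. It is open, and nonempty by the local inclusion at $x$. To see $P$ is closed, take $p_0=|(z_0,L_0)|$ in its closure; in the uniformizer $\RR^l/C_{z_0}$ at $|z_0|$, $\eta$ is modelled on the subspace $\RR^l_{L_0}$ with $L_0$ saturated (as $\eta$ is effective), while, by properness, $X_\tau$ is locally a finite union $\pi_{z_0}\bigl(\bigcup\RR^l_{gKg^{-1}}\bigr)$ over the finitely many lifts of $|z_0|$ to $\tau$. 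A point of $P$ near $p_0$ produces a nonempty relatively open subset of $\RR^l_{L_0}$ contained in that finite union of subspaces, so by \emph{(b)} $\RR^l_{L_0}\subseteq\RR^l_{gKg^{-1}}$ for one of them, whence $\pi_{z_0}(\RR^l_{L_0})\subseteq X_\tau$ near $|z_0|$ and $p_0\in P$. Thus $P=\eta$, i.e.\ $X_\eta\subseteq X_\tau$, and symmetrically $X_\eta\subseteq X_\xi$, giving $\eta\le\tau$ and $\eta\le\xi$.

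For \eqref{le_4}, with $X$ connected let $H_{0,x}=\ker(C_x\to\mathrm{Diff}(\RR^n))\le C_x$ be the non-effective part in a uniformizer at $x$; it is $\cong H_{0,X}$. Since $H_{0,x}$ is normal in $C_x$ and fixes all of $\RR^n$, the chart of $X_{H_{0,X}}$ at $(x,H_{0,x})$ is the full $C_x\ltimes\RR^n$, and the pairs $(x,H_{0,x})$ together with all arrows between them form a subgroupoid of $\CrC_{H_{0,X}}$ isomorphic to $\CrC$; one checks locally that it is open and closed, so its orbit set is a connected component $\tau_{\max}$ with $X_{\tau_{\max}}=X$ and $H_{\tau_{\max}}=H_{0,X}$, effective because a generic $x$ has $C_x=H_{0,x}$. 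Since $X_\xi\subseteq X=X_{\tau_{\max}}$ for every $\xi\in\CrS(X)$, $\tau_{\max}$ is the unique maximal element. Finally, if $|C_x|>|H_{0,X}|$ then $x\in X_\sigma$ for $\sigma$ the component of $X_{C_x}$ through $(x,C_x)$ ($C_x$ is saturated, $x$ lying in its fixed space), which is effective with $H_\sigma=C_x$, so $\sigma<\tau_{\max}$ by \eqref{le_1}; conversely $x\in X_\tau$ with $\tau<\tau_{\max}$ forces $|C_x|\ge|H_\tau|>|H_{0,X}|$ by \eqref{le_1}. Hence $X\setminus\bigcup_{\tau<\tau_{\max}}X_\tau=\{x:C_x\cong H_{0,X}\}$, and at such $x$ one has $C_x=H_{0,x}$ acting trivially on the chart, so this set is a manifold carrying a trivial $H_{0,X}$-action, as claimed.
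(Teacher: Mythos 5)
Your proof is correct, and in parts \eqref{le_2} and \eqref{le_3} it takes a genuinely different route than the paper. Part \eqref{le_1} and the skeleton of \eqref{le_4} are essentially the paper's argument (pick an effective lift $(z,C_z)\in\tau$, compare groups; $\tau_{\max}$ is the greatest element since $X_\xi\subseteq X$ for all $\xi$), though you supply the detail about $X\setminus\bigcup_{\tau<\tau_{\max}}X_\tau$ that the paper leaves implicit.

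For \eqref{le_2}--\eqref{le_3}, the paper takes $\eta$ to be the component of $X_{C_x}$ through $(x,C_x)$ and deduces $\eta\le\tau$, $\eta\le\xi$ by asserting that the inclusions $\psi_\tau:H_\tau\hookrightarrow C_x$, $\psi_\xi:H_\xi\hookrightarrow C_x$ induce smooth maps $\eta\to X_{H_\tau}$, $\eta\to X_{H_\xi}$ compatible with the immersion to $X$, and that connectedness then forces the image to land in $\tau$ (resp.\ $\xi$). You instead take $\eta$ through $(x,\bar L)$ where $\bar L=G_{\RR^m_{\langle K_1,K_2\rangle}}$ is the saturation, verify effectiveness of $\eta$ by hand, and prove $X_\eta\subseteq X_\tau$ by a direct open--closed argument on $|\eta|$: openness is the local computation, closedness uses properness of $\tau\to X$ together with your linear-algebra fact (b) that a nonempty open subset of $\RR^l_{L_0}$ cannot sit inside a finite union of proper subspaces. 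The trade-off: the paper's functorial argument is shorter, but defining the map $\eta\to X_{H_\tau}$ globally requires transporting the subgroup $\psi_\tau(H_\tau)\subset C_x$ along paths in $\eta$, which is not obviously canonical; your connectedness-of-$P$ argument sidesteps that and only ever uses pointwise containment of images, at the cost of more local bookkeeping. Your auxiliary fact (a) --- that effective components consist entirely of saturated pairs $(z,K)$, i.e.\ $K=G_{\RR^m_K}$ --- is a nice extraction from Proposition \ref{prop:opendense} and does real work in both the effectiveness check for $\eta$ and the open--closed argument; the paper does not isolate it.

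One small remark: in \eqref{le_2} the choice $\bar L=G_{\RR^m_{\langle K_1,K_2\rangle}}$ is slightly more than needed --- taking $\bar L=C_x$ as the paper does already satisfies $K_1,K_2\subseteq\bar L$ and is automatically saturated (with $\eta$ effective since it passes through $(x,C_x)$), so you could have simplified there; but your choice does no harm and dovetails with the $\eta\ne\tau$ argument in \eqref{le_3} via the order count $K_1\subseteq L\subseteq\bar L$.
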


\begin{proof}
\eqref{le_1} Since $\tau$ is effective, there is a point $x\in X_{\tau}$ such that $H_{\tau}=C_x$. 
Now, as $x\in X_{\xi}$ by assumption, $H_{\xi}$ is a subgroup of $C_x=H_{\tau}$. 
If moreover we have $H_{\xi}=H_{\tau}$, we then know that $\tau=\xi$ on a neighborhood $x$ by the local chart picture in \Cref{prop:C_H_orbifold}, hence $\tau$ and $\xi$ are the same component of $X_{H_{\tau}}$.

\eqref{le_2}
By the assumption $x\in X_{\tau} \,\cap\, X_{\xi}$, we have two inclusions $\psi_{\tau}:H_{\tau}\to C_x$ and $\psi_{\xi}:H_{\xi}\to C_x$. 
Let $\eta$ be the connected component of $X_{C_x}$ containing 
$(x,C_x)$.
Using $\psi_{\tau}:H_{\tau}\to C_x$ and $\psi_{\xi}:H_{\xi}\to C_x$, we get smooth maps $\eta\to X_{H_{\tau}},X_{H_{\xi}}$, which are compatible with the immersion into $X$. 
Since $\eta,\tau,\xi$ are connected, we have $\eta$ is mapped into $\tau$ and $\xi$. 
As a consequence, we have $\im (\eta) \subset \im (\tau) \cap \im (\xi)$, i.e.\  $\eta\le \tau$ and
$\eta \leq \xi$.
	
\eqref{le_3} The only possibility for $\tau\to X$ failing to be injective, hence an embedding, is when there are two isomorphic subgroups $K,K'\subset G$ in the local uniformizer $G\ltimes \R^n$ around $x$, such that $K,K'$ are not conjugate, but $\RR^n_K/N_G(K)$ and  $\RR^n_{K'}/N_G(K')$ are in the same component $\tau$ (i.e.\ they are connected outside of the local uniformizer). 
In this case, the connected component of $X_{C_x}$ containing 
$(x,C_x)$
is properly contained in $\tau$.

\eqref{le_4} It is clear that $X$ as a component of $X_{H_{0,X}}$ 
is a maximal element of $\CrS(X)$. 
Moreover, since $\CrS(X)$ is poset, the maximal element is unique. 
\end{proof}

\begin{remark}
    It is important to note the differences between $\tau$ and $X_{\tau}$, the former is an orbifold, while the latter is a more singular object, as $\tau\to X$ is not an embedding in general. However, $\tau \backslash \sing(\tau)\to X$ is an embedding by \eqref{le_3} of Proposition \ref{prop:le_property}.
\end{remark}

\begin{definition}
A stratification of a topological space $X$ by a poset $S$ is an upper semi-continuous map $X\to S$, i.e.,  for any $\mathfrak{s}\in S$, the set $X_{\ge \mathfrak{s}}$ of points of $X$ belonging to a stratum bigger or equal to $\mathfrak{s}$ is an open subset of $X$.
\end{definition}

\begin{proposition}
The natural map $X\to \CrS(X)$ defined by $x\mapsto \min \{\tau|\,x\in X_\tau\}$ is a stratification.
\end{proposition}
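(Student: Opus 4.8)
The plan is to verify two things: that the map is well defined (that the minimum exists), and that it is upper semi-continuous.

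\emph{Well-definedness.} Given $x\in X$, let $\eta_x$ be the connected component of $X_{C_x}$ containing $\vert(x,C_x)\vert$. Since $C_x$ is the full isotropy, $\eta_x$ is effective, so $\eta_x\in\CrS(X)$; and $x\in X_{\eta_x}$, so $\{\tau\mid x\in X_\tau\}$ is non-empty. I claim $\eta_x=\min\{\tau\mid x\in X_\tau\}$. Indeed, if $x\in X_\tau$ then the immersion $\tau\to X$ hits $x$, producing an inclusion $H_\tau\hookrightarrow C_x$, and — exactly as in the proof of \eqref{le_2} of \Cref{prop:le_property} — composing with this inclusion gives a smooth map $\eta_x\to X_{H_\tau}$ compatible with the immersions to $X$; since $\eta_x$ is connected its image lies in $\tau$, hence $X_{\eta_x}\subseteq X_\tau$, i.e.\ $\eta_x\le\tau$. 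Thus $\eta_x$ is a lower bound of $\{\tau\mid x\in X_\tau\}$ lying in that set, hence its minimum. (When $X$ is compact one may instead argue abstractly: by \eqref{le_2} of \Cref{prop:le_property} the set is downward directed, and by \Cref{prop:po} it is finite, so it has a least element.)

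\emph{Upper semi-continuity.} Fix $\sigma\in\CrS(X)$; I must show $X_{\ge\sigma}=\{x\mid\eta_x\ge\sigma\}$ is open. Take $x_0\in X_{\ge\sigma}$ and a local chart $G\ltimes\RR^n$ around $x_0$, with $G=C_{x_0}$ acting linearly and $x_0$ represented by $0\in\RR^n$. For $y\in\RR^n$, write $[y]\in X$ for the corresponding point; its isotropy is the stabilizer $G_y:=\{g\in G\mid gy=y\}\le G$, so by the first part $\eta_{[y]}$ is the component of $X_{G_y}$ containing $\vert(y,G_y)\vert$. The key point is that $x_0\in X_{\eta_{[y]}}$. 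To see this, recall from the proof of \Cref{prop:C_H_orbifold} that near $\vert(0,G_y)\vert$ the orbifold $X_{G_y}$ is modeled on $N_G(G_y)\ltimes\RR^n_{G_y}$, where $\RR^n_{G_y}$ is the linear subspace fixed by $G_y$; this subspace is connected and contains both $y$ (by definition of $G_y$) and $0$ (which is fixed by $G\supseteq G_y$). Hence $\vert(0,G_y)\vert$ and $\vert(y,G_y)\vert$ lie in the same connected component $\eta_{[y]}$ of $X_{G_y}$, and since $\vert(0,G_y)\vert$ maps to $x_0$ under the immersion $\eta_{[y]}\to X$, we get $x_0\in X_{\eta_{[y]}}$. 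By the minimality of $\eta_{x_0}$ from the first part, $\eta_{x_0}\le\eta_{[y]}$, so $\eta_{[y]}\ge\eta_{x_0}\ge\sigma$, i.e.\ $[y]\in X_{\ge\sigma}$. Thus the whole chart neighborhood of $x_0$ lies in $X_{\ge\sigma}$, which is therefore open.

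The substance of the argument is precisely that geometric observation: in a linear local model the stratum of a nearby point can only move up in $(\CrS(X),\le)$, because the center of the model always lies on the fixed locus of the stabilizer of any nearby point; everything else is bookkeeping with \Cref{prop:le_property} and the local charts of \Cref{prop:C_H_orbifold}. The one subtlety I expect to watch is ensuring that $\eta_{[y]}$ genuinely belongs to $\CrS(X)$, i.e.\ is effective — which holds since it contains $\vert(y,G_y)\vert$ with $G_y$ equal to the full isotropy of $[y]$.
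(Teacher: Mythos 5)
Your proof is correct, but it takes a genuinely different route from the paper for the upper semi-continuity step. The paper argues globally: $X_{\ge\tau}$ is the complement of the finite union $\bigcup_{\xi:\,\tau\not\le\xi}X_{\le\xi}$, and each $X_{\le\xi}=\im(\xi)$ is closed because the immersion $\xi\to X$ is proper; openness follows immediately. You instead argue locally, showing that in a linear chart $G\ltimes\tilde U$ around $x_0$ the stratum of a nearby point $[y]$ can only move up, because $0$ always lies on $\RR^n_{G_y}$ and hence $\eta_{x_0}\le\eta_{[y]}$. The paper's argument is shorter and more abstract, leaning on properness and finiteness of $\CrS(X)$; yours is more geometric, makes the mechanism behind upper semi-continuity transparent, and does not explicitly invoke closedness of the images $\im(\xi)$ (so it is arguably more robust if one wanted to drop compactness). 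For well-definedness the paper cites \eqref{le_2} of \Cref{prop:le_property} together with finiteness, whereas you explicitly identify the minimum as the effective component $\eta_x$ of $X_{C_x}$ through $\vert(x,C_x)\vert$ and verify it is a lower bound by rerunning the map construction from the proof of \eqref{le_2}; you also note the paper's shorter argument as an alternative. One small point worth being careful about in your write-up: the chart for $X_{G_y}$ near $\vert(0,G_y)\vert$ is $N_G(G_y)\ltimes(\tilde U\cap\RR^n_{G_y})$ rather than all of $N_G(G_y)\ltimes\RR^n_{G_y}$, so to conclude that $0$ and $y$ lie in the same component you should observe that $\tilde U$ can be chosen star-shaped about $0$, making $\tilde U\cap\RR^n_{G_y}$ connected; with that observation the argument is complete.
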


\begin{proof}
First, because of \eqref{le_2} in \Cref{prop:le_property} and the fact that $\CrS(X)$ is finite, the assignment $x\mapsto \min \{\tau|\,x\in X_\tau\}$ is well-defined. 
Now given a stratum $\tau$, the set of strata $A=\{\,\xi\;|\;\tau \not \le  \xi \,\}$ is a finite set, which does not contain $\tau_{\max}$. 
Since $X_{\le \xi}=\im (\xi)$ is a closed subset, and $X_{\ge \tau}$ is just the complement of $\bigcup\limits_{\xi\in A} 
X_{\le \xi}$, it follows that $X_{\ge \tau}$ is open, as desired.
\end{proof}

It is clear that $\tau$ is never an effective orbifold, as $H_{\tau}$ always acts trivially in a local chart. Then the reduced orbifold $\tau_{\rm{R}}$ is simply $\tau$ with 
isotropy group the quotient group by $H_{\tau}$, i.e.\ $N_{C_x}(H_{\tau})/H_{\tau}$, by Proposition \ref{prop:C_H_orbifold}. The following observation will be used in the inductive construction of lattices on orbifolds in Section \ref{s:lattice}.

\begin{proposition}\label{prop:sing}
Let $x\in \sing(\tau_{\rm{R}})$. With a bit abuse of language, we also use $x$ to denote the image of $x$ under $\tau\to X$ (note that $|\tau|=|\tau_{\rm{R}}|$) in $X_{\tau}$. Then there exists  $\eta<\tau$, such that $x\in X_{\eta}$.
\end{proposition}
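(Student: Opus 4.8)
The plan is to work in a local uniformizer and show that a singular point of the reduced orbifold $\tau_{\rm R}$ must lie on a stratum $X_\eta$ with $\eta < \tau$, by producing a point of $X_{C_x}$ strictly ``deeper'' than $\tau$. Let me set up the notation first. Since $x \in \sing(\tau_{\rm R})$, the reduced isotropy group $N_{C_x}(H_\tau)/H_\tau$ at $x$ is nontrivial; equivalently, $N_{C_x}(H_\tau)$ strictly contains $H_\tau$. Using \Cref{prop:C_H_orbifold}, pick an orbifold chart $G \ltimes \RR^n$ for $X$ around $x$ with $C_x = G$, in which $\tau$ is locally modeled on $N_G(K) \ltimes \RR^n_K$ for a subgroup $K \cong H_\tau$ with $K = C_x$ when $x$ is regarded as the preimage point $(x, K)$ in $(C_H)_0$ — note that effectiveness of $\tau$ and the fact that we may center the chart at $x$ ensures we can take $K$ to be the full isotropy of the base point inside the chart, i.e.\ $K = G$ ... but wait, that would force $N_G(K)/K$ trivial. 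The point is rather: $x$ as a point of $|\tau| = |\tau_{\rm R}|$ corresponds to a point $(x_0, K)$ with $K \cong H_\tau$, and $x$ being singular in $\tau_{\rm R}$ means the stabilizer of $x_0$ inside $N_G(K)$, modulo $K$, is nontrivial — that stabilizer is $N_{C_{x_0}}(K)$, so $N_{C_{x_0}}(K) \supsetneq K$.

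The key step is then the following. Let $\eta$ be the connected component of $X_{C_{x_0}}$ containing the point $(x_0, C_{x_0})$; this is meaningful because $C_{x_0}$ is itself a subgroup of the isotropy group at $x_0$ (namely the whole of it in its own chart), so $(x_0, C_{x_0})$ is a legitimate object of $(C_{C_{x_0}})_0$. First I would check $x \in X_\eta$: this is immediate since the image of $(x_0, C_{x_0})$ under $\eta \to X$ is the same point of $X$ as the image of $(x_0, K)$, namely $x$. Next I would check $\eta \le \tau$: from the inclusion $K \subset C_{x_0}$ one gets, exactly as in the proof of \eqref{le_2} of \Cref{prop:le_property}, a smooth map $\eta \to X_{H_\tau}$ compatible with the immersions into $X$, whose image (by connectedness of $\eta$ and the fact that it contains a point mapping into $\tau$) lands in $\tau$; hence $X_\eta \subset X_\tau$, i.e.\ $\eta \le \tau$. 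Finally I would check $\eta \ne \tau$, which upgrades this to $\eta < \tau$: if $\eta = \tau$ then $H_\eta = H_\tau$ by \eqref{le_1}, but $H_\eta = C_{x_0} \supsetneq K \cong H_\tau$ because $N_{C_{x_0}}(K) \supsetneq K$ forces $C_{x_0} \supsetneq K$, a contradiction. (Here I use that $K \subsetneq N_{C_{x_0}}(K) \subseteq C_{x_0}$.)

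Let me reconsider the last inequality more carefully, since that is where I expect the main subtlety. What I actually need is not that $C_{x_0}$ properly contains $K$, but that the component $\eta$ of $X_{C_{x_0}}$ through $(x_0, C_{x_0})$ is strictly smaller than $\tau$ in the poset. The cleanest route: by \eqref{le_1} of \Cref{prop:le_property}, $\eta \le \tau$ together with $\eta = \tau$ would force $H_\eta = H_\tau$; but $H_\eta \cong C_{x_0}$ and $H_\tau \cong K$ as abstract groups, and $|C_{x_0}| = [N_{C_{x_0}}(K) : 1]\cdot(\text{something}) \ge |N_{C_{x_0}}(K)| > |K| = |H_\tau|$ using $N_{C_{x_0}}(K) \subseteq C_{x_0}$ and $N_{C_{x_0}}(K) \supsetneq K$. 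Hence $|H_\eta| > |H_\tau|$, so $H_\eta \not\cong H_\tau$ and $\eta \ne \tau$. Combined with $\eta \le \tau$ this gives $\eta < \tau$, completing the proof.

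The main obstacle is bookkeeping the three different groups in play — the abstract group $H_\tau$, its realization $K$ inside the ambient isotropy $G = C_x$, and the full ambient isotropy $C_{x_0}$ at the base point of the chart — and correctly identifying ``$x$ is singular in $\tau_{\rm R}$'' with ``$N_{C_{x_0}}(K) \supsetneq K$'' via the description of $\tau_{\rm R}$ as $\tau$ with isotropy $N_{C_x}(H_\tau)/H_\tau$ given just before the statement. Once that translation is in hand, the argument is a direct reprise of the technique used for \eqref{le_2} and \eqref{le_1} of \Cref{prop:le_property}, so no genuinely new ideas are required; I would just need to be careful that $(x_0, C_{x_0})$ is a bona fide point of $X_{C_{x_0}}$ (it is, being of the form $|(y, C_y)|$) and that the natural maps between the various $X_{(-)}$'s are the ones induced by subgroup inclusions as in \Cref{prop:C_H_orbifold} and its corollaries.
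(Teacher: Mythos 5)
Your proposal is correct and takes essentially the same route as the paper's (very terse) proof: identify the singularity of $x$ in $\tau_{\rm R}$ with $N_{C_{x_0}}(H_\tau) \supsetneq H_\tau$, take $\eta$ to be the effective component of $X_{C_{x_0}}$ through $(x_0, C_{x_0})$, and conclude it is strictly smaller than $\tau$. Your added detail — establishing $\eta \le \tau$ by the map argument from part (2) of \Cref{prop:le_property}, and $\eta \neq \tau$ by comparing cardinalities via part (1) — is exactly the unpacking of the paper's phrase ``clearly smaller than $\tau$''.
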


\begin{proof}
If $x\in \sing(\tau_{\rm{R}})$, then $N_{C_x}(H_\tau)\ne H_{\tau}$. Then we can take $\eta$ to be the effective stratum containing $(x,C_x)$, which is clearly smaller than $\tau$. 
\end{proof}

Let $\CrS(X)$ be the set of strata of the orbifold $X$. 
We denote by $\tau_{\max}$ the
main or top stratum. We also define
a height map $h:\CrS(x)\to \N$ by
 \begin{equation}\label{eqn:height}
 h(\tau):=\max\left\{k|\, \exists \tau=\tau_k
 <\ldots<\tau_1<\tau_{\max}\right\}.
 \end{equation}

In the construction of asymptotically holomorphic sections on an orbifold, we will work by induction on the stratification above. 
More precisely, we will find asymptotically holomorphic sections supported in the neighborhoods of each stratum with certain transversality property, by induction on the strata using the order on the set $\CrS(X)$. 
We point out however that $\CrS(\tau)$ is \emph{not} the same as $\CrS(X)_{\le \tau}$ in general,
as the following simple example already shows.

\begin{ex}
Let $D$ denote the orbifold given by the quotient of $\{|x|,|y|\le 1\}\subset \RR^2$ by an action of $\Z/2\times \Z/2$, which acts by reflection on each coordinate. 
We can glue $\Z/2\ltimes (-\epsilon,\epsilon)\times [-1,1]$, where $\Z/2$ acts by reflection on the first coordinate, to $(-\epsilon,\epsilon)\times \{1\}\subset \partial D$ and $\{1\}\times (-\epsilon,\epsilon) \subset \partial D$. 
We denote by $X$ the glued orbifold.
Then, $\CrS(X)$ has three elements $\{\tau_{\max}\ge \tau_1\ge \tau_0\}$, where $\tau_0$ is given by $\bullet/(\Z/2\times \Z/2)$ and $\tau_1$ is given (up to isomorphism) by $[-3,3]$ equipped with a trivial $\Z/2$ action at every point and with an additional $\Z/2$ action by reflection at the points in the subset $\{-2,2\}$. 
Then $\CrS(\tau_1)$ contains three elements $\{\tau_1\ge \tau'_0,\tau''_0\}$ and both $\tau'_0,\tau''_0$ are isomorphic to $\tau_0$. 
In particular, $\CrS(\tau_1)$ is \emph{not} the same as $\CrS(X)_{\le \tau_1}$.
Moreover, this example also shows that in general $\tau \to X$ is \emph{not} an embedding.
\end{ex}	

Although we will not need the following, we include the discussion of ``associativity" of strata in the following.

\begin{proposition}
For any $\tau\in  \CrS(X)$ there is a natural surjective map $\iota_{\tau}:\CrS(\tau)\to \CrS(X)^{\le \tau}$ respecting ordering such that the following properties are satisfied.
\begin{itemize}
    \item $\iota_{\tau}$ maps the unique maximal element $\tau$ of $\CrS(\tau)$ to $\tau\in \CrS(X)$. 
    \item For any $\xi\in \CrS(\tau)$, there is a natural  map $f_{\xi}:\xi\to \iota_\tau(\xi)$, such that the following commutes,
$$
\xymatrix{
\xi \ar[d]^{f_{\xi}} \ar[r] & \tau   \ar[r] & X\ar[d]^{\rm{id}}\\
\iota_{\tau}(\xi)\ar[rr] && X
}
$$
Here $f_{\xi}$ is surjective on orbit spaces and is submersive (i.e.\ $d f_{\xi}$ is surjective). Moreover, $f_{\xi}$ on isotropy groups is always injective (hence $f_{\xi}$ can be viewed as a branched cover). 
\end{itemize}
\end{proposition}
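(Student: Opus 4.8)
The plan is to construct the map $\iota_\tau$ first at the level of underlying groups and then check it is order-preserving, surjective and carries the extra structure. Recall that an element $\xi \in \CrS(\tau)$ is an effective connected component of $\tau_K$ for some finite group $K$, where we work with $\tau$ itself (not $\tau_{\rm R}$) as a — necessarily non-effective — orbifold whose generic isotropy is $H_\tau$. By \Cref{prop:C_H_orbifold}, near a point $(x, K \subset C_x^{(\tau)})$ the groupoid $\tau_K$ is modeled on $N_{C_x^{(\tau)}}(K) \ltimes (\RR^{n_\tau})_K$, where $C_x^{(\tau)} = N_{C_x}(H_\tau)/H_\tau$ is the isotropy group of $x$ \emph{in} $\tau_{\rm R}$ by the remark preceding \Cref{prop:sing}, and $n_\tau = \dim\tau$. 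I would first observe that a subgroup $K \le C_x^{(\tau)}$ isomorphic to $H_\tau \backslash$-cosets corresponds, by pulling back along the quotient $N_{C_x}(H_\tau) \twoheadrightarrow C_x^{(\tau)}$, to a subgroup $\widehat K \le N_{C_x}(H_\tau) \le C_x$ containing $H_\tau$. Then $(\RR^{n_\tau})_K = (\RR^n)_{\widehat K}$ because $H_\tau$ already acts trivially on the chart $\RR^{n_\tau}$ of $\tau$, and the $K$-fixed locus inside it is cut out by the image of $\widehat K$. This means the effective component $\xi$ of $\tau_K$ through $(x,K)$ is (after reduction) canonically identified with the effective component of $X_{\widehat K}$ through $(x, \widehat K)$, which is an element $\iota_\tau(\xi) \in \CrS(X)$; and since $X_\xi \subset X_\tau$ one has $\iota_\tau(\xi) \le \tau$, so $\iota_\tau$ lands in $\CrS(X)^{\le \tau}$. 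Sending $\tau \mapsto \tau$ (corresponding to $K = \{1\}$, $\widehat K = H_\tau$) gives the first bullet.

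For surjectivity: given $\zeta \in \CrS(X)$ with $\zeta \le \tau$, pick a point $x \in X_\zeta \subset X_\tau$ which is generic on $\zeta$, so $H_\zeta = C_x$ and $H_\zeta \supseteq H_\tau$ by \eqref{le_1} of \Cref{prop:le_property}. Since $x \in X_\tau$ with $\tau$ effective, $H_\tau$ embeds in $C_x = H_\zeta$; the subgroup $H_\tau \le H_\zeta$ is normal on the relevant fixed locus — more precisely, restricting to a neighborhood where $X_\tau$ is the $H_\tau$-fixed stratum, the isotropy groups of nearby points of $X_\tau$ all contain $H_\tau$ and lie in $N_{C_x}(H_\tau)$ — so $H_\zeta/H_\tau$ is a subgroup $K$ of $C_x^{(\tau)}$ with $\widehat K = H_\zeta$, and the effective component of $\tau_K$ through $(x,K)$ maps under $\iota_\tau$ to $\zeta$. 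Order-preservation is immediate from the construction: if $\xi \le \xi'$ in $\CrS(\tau)$, i.e.\ $\tau_\xi \subset \tau_{\xi'}$ inside $\tau$, then applying $\tau \to X$ gives $X_{\iota_\tau(\xi)} \subset X_{\iota_\tau(\xi')}$.

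For the second bullet, the map $f_\xi : \xi \to \iota_\tau(\xi)$ is induced by the groupoid morphism $N_{C_x^{(\tau)}}(K) \ltimes (\RR^{n_\tau})_K \to N_{C_x}(\widehat K) \ltimes (\RR^n)_{\widehat K}$ which is the identity on objects (using $(\RR^{n_\tau})_K = (\RR^n)_{\widehat K}$ as above) and on isotropy is the inclusion $N_{C_x^{(\tau)}}(K) = N_{C_x}(\widehat K)/H_\tau \hookrightarrow N_{C_x}(\widehat K)$ composed with nothing — wait, this is a surjection $N_{C_x}(\widehat K) \twoheadrightarrow N_{C_x}(\widehat K)/H_\tau$ on the wrong side; rather $f_\xi$ should be read the other way, with $\xi$ having the \emph{larger} isotropy and $\iota_\tau(\xi)$ — no: $\iota_\tau(\xi)$ as an element of $\CrS(X)$ carries isotropy $N_{C_x}(\widehat K)$ acting on $(\RR^n)_{\widehat K}$, while $\xi \subset \tau$ carries isotropy $N_{C_x}(\widehat K)/H_\tau$ on the same space, so the reduced orbifolds agree and $f_\xi$ is genuinely an identity on orbit spaces in this chart, hence submersive with injective (indeed identity) differential; globally, since $\xi$ and $\iota_\tau(\xi)$ may differ by how charts are glued outside a local uniformizer (exactly the phenomenon in \eqref{le_3} and the example after \Cref{prop:sing}), $f_\xi$ need only be surjective on orbit spaces and a submersion, and a branched cover of orbifolds with injective maps on isotropy. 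The commutativity of the square is built into the construction since both composites $\xi \to \tau \to X$ and $\xi \to \iota_\tau(\xi) \to X$ are, in each local uniformizer, the inclusion $(\RR^n)_{\widehat K} \hookrightarrow \RR^n$ with the evident equivariance.

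The main obstacle I expect is the bookkeeping of non-effectiveness: $\tau$ is never effective, its ``isotropy group'' $C_x^{(\tau)}$ must be taken after reduction, and the passage $K \leftrightarrow \widehat K$ between subgroups of $C_x^{(\tau)}$ and subgroups of $C_x$ containing $H_\tau$ — together with checking that $\widehat K$ really is the maximal subgroup of $C_x$ fixing $(\RR^n)_{\widehat K}$ so that the resulting stratum is honestly \emph{effective} in $\CrS(X)$ — requires care. A secondary subtlety, which is why $f_\xi$ cannot be claimed to be an embedding or an isomorphism, is the global gluing issue illustrated by the example following \Cref{prop:sing}: two non-conjugate but abstractly isomorphic subgroups can land in the same component of $\tau_K$ while defining genuinely distinct strata of $X$ that happen to have the same image, forcing $\iota_\tau$ to be merely surjective (not injective) and $f_\xi$ to be merely a branched cover.
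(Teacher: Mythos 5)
Your overall approach matches the paper's: describe a stratum $\xi$ of $\tau$ locally by a nested chain of subgroups $(x, K_0 \cong H_\tau, K')$ in a uniformizer, define $\iota_\tau$ by forgetting the middle level (sending this to the $X$-stratum through $(x, K')$), take $f_\xi$ to be the induced forgetful map, and observe commutativity, order-preservation and surjectivity from the construction. The qualitative conclusions you draw about why $\iota_\tau$ fails to be injective and why $f_\xi$ is only a branched cover are also on target.

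There is, however, a genuine error in the isotropy computation. Writing $\widehat K \le N_{C_x}(H_\tau)$ for the preimage of $K \le C_x^{(\tau)} := N_{C_x}(H_\tau)/H_\tau$, the correct formula is
$N_{C_x^{(\tau)}}(K) = N_{N_{C_x}(H_\tau)}(\widehat K)/H_\tau$,
not $N_{C_x}(\widehat K)/H_\tau$ as you write: an element of $C_x^{(\tau)}$ is a class of an element of $N_{C_x}(H_\tau)$, not of all of $C_x$. Since $N_{N_{C_x}(H_\tau)}(\widehat K) = N_{C_x}(H_\tau)\cap N_{C_x}(\widehat K)$ can be a proper subgroup of $N_{C_x}(\widehat K)$, your subsequent claim that ``$f_\xi$ is genuinely an identity on orbit spaces in this chart'' is false: the two quotients of $(\RR^n)_{\widehat K}$ are by different groups, and $f_\xi$ is already a branched cover locally, not just globally. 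You correctly sense the problem (``wait, this is a surjection on the wrong side''), because your route through $\tau_{\rm R}$ produces a quotient group $N/H_\tau$ where there is no natural homomorphism into $N_{C_x}(\widehat K)$; but instead of resolving it you assert the desired injectivity. The clean fix is to \emph{not} reduce: take $\xi$ to be a component of $\tau_{K'}$ with $\tau$ the non-effective groupoid $\CrC_{H_\tau}$. By \Cref{prop:C_H_orbifold} its isotropy at $((x,K_0),K')$ is $N_{N_{C_x}(K_0)}(K')$, which is a literal subgroup of $N_{C_x}(K')$, the isotropy of $\iota_\tau(\xi)$. That inclusion is exactly the paper's map on the morphism level, and makes injectivity on isotropy (hence the branched-cover property) immediate, while the map on objects is the identity on $(\RR^n)_{K'}$, giving the submersivity.
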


\begin{proof}
Let $\CrC$ be a groupoid representing the orbifold $X$.
Let $\xi\in\mathcal{S}(\tau)$, and $H_\xi$ its associated group. In other words, $\xi$ is the orbit space of a connected component of $(\CrC_{H_{\tau}})_{H_\xi}$.
Now, because of the local charts in \Cref{prop:C_H_orbifold}, the objects of the latter orbifold are just 
triples $(x,K,K')$ with $K\cong H_\tau\subset C_x$ and $K\subsetneq  K'\subset N_{C_x}(K)$ with $H_{\xi}\cong K'$.
The map $\iota_\tau$ then associates to the connected component $\xi$ the connected component $\iota_\tau(\xi)$ in $\mathcal{S}(X)$ which is the orbit set of the component of $\CrC_{H_\tau}$ containing $(x,K')$.

This map has values in the set of connected components which are $\leq \tau$ by construction.
Moreover, it preserves the natural order of the two posets.
Lastly, the desired covering map $f_\xi\colon \xi\to \iota_\tau(\xi)$ for each $\xi \in \mathcal{S}(\tau)$ is simply induced by the map $(x,K,K')\mapsto (x,K')$, where $x$, $K$ and $K'$ are as above. 
It is clear that $f_{\xi}$ is surjective on orbit spaces. Moreover, on the object level, $f_{\xi}$ is the identity map on the fixed subspace of the $K'$-action, hence $d f_{\xi}$ is surjective. While on morphism level, $f_{\xi}$ is the inclusion  $N_{N_{C_x}(K)}(K')\subset N_{C_x}(K')$.
\end{proof}

\section{Geometric structures on orbifolds}\label{s:geo}

Various geometric structures on orbifolds can be defined using the atlas in Definition \ref{def:geo_orbifold} by requiring local invariant or equivariant property and compatibility with change of charts. We list the relevant structures in the following. All orbifolds in the section are assumed to be geometric, i.e.\ effective.
\begin{itemize}
    \item We define {\em orbifold functions} on an orbifold $X$ as orbifold maps $f\colon X \rightarrow\RR$,  considering  $\RR$ as an orbifold. We denote by $C^\infty_{\orb}(X)$ the set of orbifold functions. In other words, a smooth function on $X$ is a continuous function on the topological space $X$, which admits $C^\infty$ invariant lifting to each chart.
    \item An orbifold $X$, with atlas $\{(U_i, {\widetilde U}_i, H_i, \varphi_i)\}$, is {\em oriented} if each $ {\widetilde U}_i$ is oriented, the $H_i$ are subgroups of $\SOO(n)$, and all the changes of charts $\rho_{ij} \colon{\widetilde U}_i\rightarrow{\widetilde U}_j$ are orientation-preserving.
    \item An (orbifold)  {\em Riemannian metric} $g$ on $X$ is a positive definite symmetric tensor in $T^* X\ox T^*X$. This is equivalent to having, for each orbifold chart $(U_i, {\widetilde U}_i, H_i, \varphi_i)$ on $X$, a Riemannian metric $g_{i}$ on the open set ${\widetilde U}_i$ that is invariant under the action of $H_i$ on ${\widetilde U}_i$ ($H_i$ acts on ${\widetilde U}_i$ by isometries), and for which the changes of charts $\rho_{ij}\colon{\widetilde U}_i\rightarrow{\widetilde U}_j$ are isometries, that is $\rho^{*}_{ij}\big(g_{j}\vert_{\rho_{ij}({\widetilde U}_i)}\big)\,=\,g_{i}$.
    \item An (orbifold)  {\em almost complex structure} $J$ on $X$ is an endomorphism $J\colon TX\rightarrow TX$ such that $J^2= -\mathrm{Id}$. Thus, $J$ is determined by an almost complex structure $J_i$ on ${\widetilde U}_i$, for every orbifold chart $(U_i, {\widetilde U}_i, H_i, \varphi_i)$ on $X$, such that the action of $H_i$ on ${\widetilde U}_i$ is by biholomorphic maps, and any change of charts  $\rho_{ij}\colon{\widetilde U}_i\rightarrow{\widetilde U}_j$ is a holomorphic embedding.
    \item An {\em orbifold $p$-form} $\alpha$ on $X$ is a section of $\bigwedge^p T^* X$. This means that, for each orbifold chart  $(U_i, {\widetilde U}_i, H_i, \varphi_i)$ on $X$, we have  a differential $p$-form $\alpha_i$ on the open set ${\widetilde U}_i$, such that every $\alpha_i$ is $H_i$-invariant (i.e. $h^{*}(\alpha_i)= \alpha_i$, for  $h\in H_i$), and any change of charts  $\rho_{ij}\,\colon\,{\widetilde U}_i\,\longrightarrow\,{\widetilde U}_j$ satisfies  $\rho^{*}_{ij}(\alpha_j)=\alpha_i$. The space of $p$-forms on $X$ is denoted by $\Omega_{\orb}^{p}(X)$.
    \item  Observe that an orbifold $X$ of dimension $n$ is oriented if and only if there exists a globally non-zero orbifold form of degree $n$, that is called a \emph{volume form} of $X$. 
    \item The wedge product of orbifold forms and the exterior differential $d$ on $X$ are well defined. The \emph{orbifold de Rham cochain complex} is defined:
    $$
    \cdots \  \overset{d}{\longrightarrow}\Omega_{\orb}^{p}(X) \, \overset{d}{\longrightarrow} \, \Omega_{\orb}^{p+1}(X) \ \overset{d}{\longrightarrow} \ \cdots
    $$
    and its cohomology is the \emph{orbifold de Rham cohomology} of $X$, which is denoted $H_{\orb,\mathrm{dR}}^*(X)$. This is isomorphic to the usual singular cohomology with real coefficients \cite{CFM},
 $$
 H^*_{\orb,\mathrm{dR}}(X)\cong H^*(X,\RR).
 $$
    \item There is another notion of cohomology of orbifolds, which captures more information of the isotropy groups. Namely, we can take the cohomology of the classifying space $BX$ of $X$. We define $H^*_{\orb}(X,\ZZ)=H^*(BX,\ZZ)$ \cite[Section 2.1]{Adem}. By \cite[Proposition 2.11]{Adem}, when using $\RR$-coefficients, we also have $H^*_{\orb}(X,\ZZ)\otimes \R \cong H^*_{\orb,\mathrm{dR}}(X)\cong H^*(X,\RR)$.
\item A \emph{symplectic orbifold} $(X,\omega)$ is an orbifold $X$ equipped with an
orbifold $2$-form $\omega\in \Omega^2_{\orb,\mathrm{dR}}(X)$ such that
$d\omega=0$ and $\omega^n>0$, where $2n=\dim X$. 
In particular, a symplectic orbifold  is oriented.
\end{itemize}

We have a Darboux theorem for symplectic orbifolds \cite[Proposition 11]{MuRo}.

\begin{proposition}\label{DTh}
Let $(X,\o)$ be a symplectic orbifold and $x \in X$. There exists an orbifold chart $(U, V,\phi, H)$ around $x$
with local coordinates $(x_1,y_1,\ldots, x_n,y_n)$ such that the symplectic form
has the expression $\o= \sum dx_i \wedge dy_i$ and $H < \UU(n)$ is a subgroup of the unitary group.
\end{proposition}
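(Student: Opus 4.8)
The plan is to reduce the orbifold statement to the smooth equivariant Darboux theorem, applied inside a single orbifold chart. First I would fix a geometric orbifold chart $(U,V,\varphi,H)$ around $x$ as in \Cref{def:geo_orbifold}, so that $V\subset\RR^{2n}$ is a connected $H$-invariant open set with $H<\OO(2n)$ fixing a preimage $\tilde x\in V$ of $x$ (such a chart exists since $x$ has a well-defined isotropy group $H_x$, and shrinking $V$ we may assume $V$ is $H$-invariant and $\tilde x$ is the unique fixed point data we need). The pulled-back symplectic form $\tilde\omega=\varphi^*\omega$ is then a genuine $H$-invariant closed nondegenerate $2$-form on $V$ in the ordinary sense, because differential forms on an effective orbifold are by definition $H$-invariant forms in the charts (see the bullet list in \Cref{s:geo}).

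The core step is the equivariant Darboux theorem: given the finite group $H$ acting linearly on $\RR^{2n}$ and fixing $\tilde x$, and an $H$-invariant symplectic form $\tilde\omega$ defined near $\tilde x$, there is an $H$-equivariant diffeomorphism from a neighborhood of $\tilde x$ onto a neighborhood of $0$ in $(\RR^{2n},\omega_0)$ with $\omega_0=\sum dx_i\wedge dy_i$, carrying $\tilde\omega$ to $\omega_0$. I would prove this by the equivariant Moser trick: first use an $H$-equivariant linear change of coordinates to arrange $\tilde\omega(\tilde x)=\omega_0$ at the point $\tilde x$ (this is the linear symplectic normal form, done $H$-equivariantly by averaging an arbitrary symplectic basis, or by noting that the space of $H$-invariant linear symplectic forms agreeing with a given one is connected). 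Then set $\omega_t=(1-t)\omega_0+t\tilde\omega$, which is symplectic near $\tilde x$ for all $t\in[0,1]$ and $H$-invariant; writing $\tilde\omega-\omega_0=d\alpha$ with $\alpha$ obtained from an $H$-equivariant homotopy operator (the radial Poincaré homotopy, which commutes with the linear $H$-action, applied after subtracting a constant so that $\alpha$ vanishes to second order at $\tilde x$), one solves $\iota_{X_t}\omega_t=-\alpha$ for a time-dependent vector field $X_t$ that is automatically $H$-equivariant and vanishes at $\tilde x$; its flow $\psi_t$ is then $H$-equivariant, defined near $\tilde x$ for $t\in[0,1]$, and $\psi_1^*\tilde\omega=\omega_0$. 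Composing $\psi_1$ with the linear map from the first step gives the desired $H$-equivariant chart.

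Finally I would upgrade the group from $\OO(2n)$ to $\UU(n)$: the new coordinates identify the chart with a neighborhood of $0$ in the standard symplectic $\RR^{2n}=\CC^n$, and conjugating the $H$-action by the constructed diffeomorphism makes $H$ act on a neighborhood of $0$ preserving $\omega_0$; its linearization at $0$ is then a finite subgroup of $\mathrm{Sp}(2n,\RR)$, and a further $H$-equivariant linearization (again Bochner's linearization theorem for finite group actions, or averaging) followed by the standard fact that a finite subgroup of $\mathrm{Sp}(2n,\RR)$ is conjugate into $\UU(n)=\mathrm{Sp}(2n,\RR)\cap\OO(2n)$ (conjugate by the positive square root of an averaged inner product) puts $H$ inside $\UU(n)$. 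Relabeling the resulting coordinates as $(x_1,y_1,\dots,x_n,y_n)$ yields $\omega=\sum dx_i\wedge dy_i$ with $H<\UU(n)$, and pushing this chart down through $\varphi$ gives the required orbifold chart around $x$.

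The main obstacle is bookkeeping the equivariance throughout the Moser argument — ensuring the primitive $\alpha$, the homotopy operator, and the flow are all genuinely $H$-equivariant and defined on a common $H$-invariant neighborhood — rather than any deep new idea; all the analytic content is the classical (equivariant) Moser/Darboux machinery, and the only genuinely orbifold-specific input is the trivial observation that orbifold forms are invariant forms in charts, so that working $H$-equivariantly in one chart suffices. (Alternatively one may simply cite the equivariant Darboux theorem from the literature and only carry out the $\OO(2n)\rightsquigarrow\UU(n)$ reduction, which is how \cite{MuRo} proceeds.)
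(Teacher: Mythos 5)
Your proof is correct, and it fills in the details that the paper omits: the paper does not actually prove Proposition~\ref{DTh} but simply cites \cite[Proposition~11]{MuRo}, exactly as you acknowledge in your closing parenthetical. The route you sketch --- pull $\omega$ back to an $H$-invariant symplectic form in a single orbifold chart, run the equivariant Moser trick with an $H$-equivariant radial primitive, then conjugate the resulting finite subgroup of $\mathrm{Sp}(2n,\RR)$ into $\UU(n)$ --- is the standard one, and all the equivariance bookkeeping you flag (the linear normalization at $\tilde x$, the equivariance of the radial homotopy operator and of the flow $\psi_t$) goes through as you describe. One small simplification: since the $H$-action in the chart is already linear and the Moser isotopy $\psi_t$ you construct commutes with it, conjugating $H$ by $\psi_1^{-1}$ leaves the action unchanged (hence still linear), so the appeal to Bochner's linearization theorem in your last step is superfluous; what is really needed there is only the elementary fact that a finite subgroup of $\mathrm{Sp}(2n,\RR)$ is conjugate into $\UU(n)=\mathrm{Sp}(2n,\RR)\cap\OO(2n)$ by the positive symmetric square root of an averaged inner product, which you also mention.
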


\begin{definition}
An \emph{almost K\"ahler orbifold} $(X,J,\omega)$ consists of an orbifold $X$, and orbifold almost complex structure $J$ and
an orbifold symplectic form $\o$ such that $g(u,v)=\o(u,Jv)$ defines an orbifold Riemannian metric with $g(Ju,Jv)=g(u,v)$. Such almost complex structure is called \emph{compatible} (with $\o$).
\\
A \emph{K\"ahler orbifold} is an almost K\"ahler orbifold satisfying the integrability condition that the Nijenhuis tensor $N_J=0$.
This is equivalent to requiring that the changes of charts are biholomorphisms of open sets of $\CC^n$.
\end{definition}

By an identical argument to \cite[Proposition 4.1.1]{MS94}, we have:
\begin{proposition} \label{almost-Kahler-orbifold}
Let $(X,\o)$ be a symplectic orbifold. Then $(X,\o)$ admits an almost K\"ahler orbifold structure $(X, \o, J , {g})$. Moreover, the space of $\o$-compatible almost complex structures is contractible.
\end{proposition}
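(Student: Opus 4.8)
The plan is to reproduce the polar-decomposition argument of \cite[Proposition 4.1.1]{MS94} chart by chart, checking that every operation involved is natural enough to be $H_i$-equivariant and compatible with the changes of charts, and then to deduce contractibility from the contractibility of the space of orbifold Riemannian metrics.

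First I would fix an orbifold Riemannian metric $g$ on $X$: in each chart take any metric on $\widetilde U_i$, average it over the finite group $H_i$ to make it $H_i$-invariant, and patch the local metrics with an orbifold partition of unity subordinate to the atlas (such a partition exists because $X$ is paracompact; convex combinations of metrics are metrics, so the patched tensor is again a fibrewise positive-definite symmetric section of $T^*X\ox T^*X$ whose local expressions are $H_i$-invariant and compatible with changes of charts). More importantly, I would record that the set $\mathrm{Met}_{\orb}(X)$ of orbifold metrics is a convex subset of the vector space of orbifold sections of $T^*X\ox T^*X$, hence contractible.

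Next comes the polar construction. On each $\widetilde U_i$ the $H_i$-invariant data $g_i,\omega_i$ determine $A_i\in\End(T\widetilde U_i)$ by $\omega_i(u,v)=g_i(A_iu,v)$; then $A_i$ is invertible and $g_i$-skew-adjoint, $-A_i^2=A_i^*A_i$ is $g_i$-positive-symmetric with a unique positive square root $P_i:=(A_i^*A_i)^{1/2}$ commuting with $A_i$, and $J_i:=A_iP_i^{-1}$ satisfies $J_i^2=-\mathrm{Id}$, is $g_i$-orthogonal, and is $\omega_i$-compatible with associated metric $g_i(P_i\cdot,\cdot)$. Since the assignments $A\mapsto A^*$, $A\mapsto A^{-1}$, and (on positive operators) $A\mapsto A^{1/2}$ are natural under isometries, and the $H_i$-action preserves both $g_i$ and $\omega_i$ while each change of charts $\rho_{ij}$ is simultaneously a $g$-isometry and an $\omega$-symplectomorphism, the $J_i$ are $H_i$-invariant and satisfy $\rho_{ij}^*J_j=J_i$. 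Hence they assemble into an orbifold almost complex structure $J$ compatible with $\omega$, which proves the first assertion. This also defines a continuous map $r\colon\mathrm{Met}_{\orb}(X)\to\mathcal{J}_{\orb}(X,\omega)$ into the space of $\omega$-compatible orbifold almost complex structures, taken with the $C^\infty$-topology.

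Finally, for contractibility I would note that the assignment $j\colon J\mapsto g_J:=\omega(\cdot,J\cdot)$ maps $\mathcal{J}_{\orb}(X,\omega)$ continuously into $\mathrm{Met}_{\orb}(X)$, and a computation in a chart (identical to the smooth one) shows $r\circ j=\mathrm{id}$. Thus $\mathcal{J}_{\orb}(X,\omega)$ is a retract of the contractible space $\mathrm{Met}_{\orb}(X)$, and a retract of a contractible space is contractible (if $H$ contracts $\mathrm{Met}_{\orb}(X)$ to a point $g_0$, then $r\circ H\circ(j\times\mathrm{id})$ contracts $\mathcal{J}_{\orb}(X,\omega)$ to $r(g_0)$), which finishes the proof. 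The only step needing genuine care is the one in the third paragraph — that the pointwise linear-algebra operations, in particular the square root of a positive symmetric operator, respect the $H_i$-actions and the changes of charts, so that the local $J_i$ really form an orbifold datum; this is immediate from naturality, so no real obstacle arises beyond bookkeeping and fixing once and for all the topologies used on $\mathrm{Met}_{\orb}$ and $\mathcal{J}_{\orb}$.
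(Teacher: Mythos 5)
Your proposal is correct and follows exactly the route the paper has in mind: the paper's proof is a one-line citation to \cite[Proposition~4.1.1]{MS94}, and your argument is the standard polar-decomposition/retraction construction from that reference, supplemented with the (routine but worth recording) observation that the pointwise operations are natural under isometries and hence $H_i$-equivariant and compatible with changes of charts. Nothing to correct.
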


Note that an almost K\"ahler orbifold structure determines a bigrading of the orbifold $k$-forms. Certainly $TX\otimes \CC=T_{1,0}X\oplus T_{0,1}X$
according to the $(\pm i)$-eigenspaces of $J$ on it. This determines the decomposition on $1$-forms by duality $\Omega^1(X,\CC)=\Omega^{1,0}(X)
\oplus \Omega^{0,1}(X)$. By taking the wedge,
 $$
 \Omega^k(X,\CC)=\bigwedge\nolimits^k \Omega^1(X,\CC)= \bigoplus_{p+q=k} \bigwedge\nolimits^p\Omega^{1,0}(X)
 \ox \bigwedge\nolimits^q \Omega^{0,1}(X) =:  \bigoplus_{p+q=k} \Omega^{p,q}(X).
 $$
This defines projections $\pi_{p,q}:\Omega^{p+q}(X,\CC) \to \Omega^{p,q}(X)$ and so the differential decomposes as
 $$
  d= \pi_{p-1,q+2}\circ d +\pi_{p,q+1}\circ d +\pi_{p+1,q}\circ d +\pi_{p+2,q-1}\circ d
$$
on $\Omega^{p,q}(X)$.
We denote $\bd=\pi_{p+1,q}\circ d \colon \Omega^{p,q}(X) \rightarrow \Omega^{p+1,q}(X)$ and $\bar\bd=\pi_{p,q+1}\circ d\colon \Omega^{p,q}(X) \rightarrow \Omega^{p,q+1}(X)$. On $\Omega^0(X,\CC)$, we have $d=\bd+\bar\bd$.

Now we introduce the concept of (complex) line bundle over an orbifold.

\begin{definition}\label{def:orbibundle}
Let $X$ be an orbifold of dimension $n$, and let $\{(U_i, {\widetilde U}_i, H_i, \varphi_i)\}_{i\,\in\,I}$ be
 an atlas on $X$. An {\em orbifold complex line  bundle} over $X$ consists of a smooth orbifold $L$ of dimension $n+2$, and
an orbifold map $\pi\,\colon\, L\,\to\, X$, called {\em projection}, satisfying:
\begin{enumerate}
\item[i)] For every orbifold chart $(U_i, {\widetilde U}_i, H_i, \varphi_i)$, there
exists a homomorphism $\rho_{i}\,\colon\, H_{i} \,\to\, \UU(1)$ and an orbifold
chart  $(V_i, {\widetilde V}_i, H_i, \Psi_i)$ on $L$, such that
$V_i\,=\,\pi^{-1}(U_i)$, ${\widetilde V}_i={\widetilde U}_i \x \CC$, the action of $H_i$
on ${\widetilde U}_i  \x \CC$ is the diagonal action (i.e.\ $h\cdot (x, u)=(h\cdot x, \rho_{i}(h)(u))$, for
$h\in H_{i}$, $x\in{\widetilde U}_i$ and for $u\in \CC$), and the map
$$
\Psi_i\,\colon\, {\widetilde V}_i={\widetilde U}_i \x \CC\,\to\, L_{|U_{i}}\,:=\, \pi^{-1}(U_i)
$$
is such that $\pi_{|V_i}\,\circ\,\Psi_i\,=\,\varphi_i\,\circ\,{\mathrm {pr}}_{1}$,
where ${\mathrm {pr}}_{1}\,\colon\, {\widetilde U}_i \x \CC\,\to\, {\widetilde U}_i$ is the natural projection,
$\Psi_i$ is $H_i$-invariant
for the action of $H_i$ on ${\widetilde U}_i  \x \CC$, and it induces a homeomorphism
$({\widetilde U_i} \x \CC)/H_i\,\cong\,L_{|U_{i}}$.

\item[ii)] If $(U_i, {\widetilde U}_i, H_i, \varphi_i)$ and $(U_j, {\widetilde U}_j, H_j, \varphi_j)$ are
two orbifold charts on $X$, with $U_i\,\subset\,U_j$, and $\rho_{ij}\,\colon\,{\widetilde U}_i\,\to\,{\widetilde U}_j$
is a change of charts, then there exists a differentiable map, called {\em transition map},
$g_{ij}\,\colon\, {\widetilde U}_i \,\to\, \UU(1)$,
and a change of charts
$\lambda_{ij}\,\colon\,{\widetilde V}_i={\widetilde U}_i \x \CC\,\to\,{\widetilde V}_j={\widetilde U}_j \x \CC$ on $L$, such that
$$
\lambda_{ij}(x, u)\,=\,\big(\rho_{ij}(x), g_{ij}(x)(u)\big),
$$
for all $(x, u)\in{\widetilde U}_i \x \CC$.
\end{enumerate}
\end{definition}

By \cite[Section 2.6]{gironella2021exact}, such orbifold complex bundles are classified by $H^2_{\orb}(X,\ZZ)=H^2(BX,\ZZ)$, using the orbifold first Chern class. A {\em section} (or \emph{orbifold smooth section}) of an orbifold complex line bundle $\pi\colon L\,\to\, X$ is an
orbifold map $s \colon X\,\to\, L$ such that $\pi\,\circ\,s\,=\,1_{X}$. Therefore, if $\{(U_i, {\widetilde U}_i, H_i, \varphi_i)\}$
is an atlas on $X$, then $s$ consists of a family of smooth maps $\{s_{i}\,\colon\,{\widetilde U}_i\,\to\, \CC\}$,
such that every  $s_{i}$ is $H_i$-equivariant and compatible with the changes of charts on $X$.
We denote the space of (orbifold smooth) sections of $L$ by $C^\infty_{\orb}(L)$.

However, if $\rho_i$ is nontrivial, the equivariant condition of a section $s$ forces it to be zero on the fixed point set of $H_i$, which is bad news for finding transverse sections. 
Hence we introduce the following. Let $X$ be a smooth orbifold of dimension $n$ with atlas $\{(U_i, {\widetilde U}_i, H_i, \varphi_i)\}$.
A \emph{complex line bundle} $L\to X$ is an orbifold line bundle $L$ with trivial actions $\rho_{i}$. Note that in this case, the transition maps
 $g_{ij}$ define  maps $g_{ij}:U_i\to \UU(1)$, satisfying the cocycle condition. In particular, $L\to X$ is a topological
line bundle. Using the exponential map for sheaves $\ZZ \to C^\infty_{\orb} \to C^\infty_{\orb}(-,\UU(1))$ and that $C^\infty_{\orb}$ is a flasque sheaf (since it has partitions of unity), we get that
 \begin{equation}\label{eqn:ah}
  H^2(X,\ZZ) \cong H^1(X,C^\infty_{\orb}
  (-,\UU(1))).
  \end{equation}
As the right hand side of (\ref{eqn:ah}) parametrizes the cocyles of orbifold sections, that is, complex line bundles over $X$, we have that the Chern class in $H^2(X,\ZZ)$ classifies the complex line bundle $L\to X$.

We shall denote by $\bar H^2(X,\ZZ)$ the image of $H^2(X,\ZZ)$ in $H^2(X,\RR)=H_{\orb,dR}^2(X)$. This is isomorphic the torsion free part of $H^2(X,\ZZ)$.

\begin{remark}
There is natural map $\pi:BX \to X$ with fiber $\pi^{-1}(x)$ homotopic to $BH_x$, where $H_x$ is the isotropy group at $x$ and $BH_x$ is the classifying space. In particular, we have a natural map $\pi^*:H^2(X,\ZZ)\to H^2_{\orb}(X,\ZZ)$, which corresponds to the obvious lift of a complex line bundle to an orbifold complex line bundle. When $X$ is compact, then there are only finitely many isotropy groups. If $\ell$ is the minimal common multiple of the cardinality of the isotropy groups, then 
$\rho_i$ is trivial for $L^{\otimes \ell}$ for any orbifold complex line bundle $L$. In other words, we have $\ell\cdot H^2_{\orb}(X,\ZZ) \subset \im (\pi^*)$.
\end{remark}

\begin{proposition}
Consider a symplectic orbifold $(X,\o)$. Then 
\begin{enumerate}
  \item There exists a symplectic form  $\widetilde{\omega}$ such that 
  $[\,\widetilde{\omega}/2\pi]\in \bar H^2(X,\ZZ)$, 
  and there is $\ell \in \NN$ with $\widetilde{\omega}/\ell$ that is $C^\infty$-close to $\o$.
  \item There exist a complex line bundle $L\to X$ with $c_1(L)=[\,\widetilde{\omega}/2\pi]$ and  an orbifold connection
$\nabla: C^\infty_{\orb}(L) \to \Omega^1_{\orb}(L)$, with curvature
 $$
 F_\nabla=- i\,\widetilde{\omega}.
 $$
\end{enumerate}
\end{proposition}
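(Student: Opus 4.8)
The plan is to prove the two assertions essentially independently, using standard facts about the image of integral cohomology and the Chern--Weil / Hodge-theoretic construction of connections, all adapted to the orbifold setting via the sheaf-theoretic machinery already set up in the excerpt.

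\textbf{Part (1): approximating $\omega$ by a rational symplectic form.}
First I would note that $\bar H^2(X,\ZZ)$, being the image of $H^2(X,\ZZ)$ in $H^2(X,\RR)$, is a full-rank lattice in (a subspace of) the finite-dimensional real vector space $H^2_{\orb,\mathrm{dR}}(X)\cong H^2(X,\RR)$; more precisely its $\QQ$-span is the rational subspace $\bar H^2(X,\QQ)$, and after tensoring with $\RR$ one gets a subspace $V\subseteq H^2(X,\RR)$, which is actually all of $H^2(X,\RR)$ when $H^2$ is finitely generated (true here by compactness, or simply work inside $V$). The class $[\omega/2\pi]$ lies in $V$, so one can choose a sequence of rational classes $a_m\in\bar H^2(X,\QQ)$ converging to $[\omega/2\pi]$; clearing denominators, there is $\ell_m\in\NN$ with $\ell_m a_m\in \bar H^2(X,\ZZ)$. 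Pick a closed orbifold $2$-form $\eta_m$ representing $2\pi\, a_m$. Then $\eta_m\to\omega$ in de Rham cohomology; by adding a suitable exact form (using a fixed Hodge-type decomposition, or just openness of the nondegeneracy condition together with $C^\infty$-density of representatives in a fixed cohomology class), one obtains closed forms $\widetilde\omega_m$ with $[\widetilde\omega_m/2\pi]=\ell_m a_m\in\bar H^2(X,\ZZ)$ and $\widetilde\omega_m/\ell_m\to\omega$ in $C^\infty$. Since nondegeneracy (equivalently $\omega^n>0$, an open condition on sections of $\bigwedge^{2n}T^*X$) is preserved under small $C^\infty$-perturbations and $X$ is compact, for $m$ large $\widetilde\omega_m/\ell_m$ is symplectic, hence so is $\widetilde\omega_m$. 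Setting $\widetilde\omega=\widetilde\omega_m$ and $\ell=\ell_m$ for such an $m$ gives (1).

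\textbf{Part (2): the line bundle and the connection.}
Set $c=[\widetilde\omega/2\pi]\in\bar H^2(X,\ZZ)$. By the classification of complex line bundles over $X$ by their Chern class in $H^2(X,\ZZ)$ established in the excerpt (the isomorphism \eqref{eqn:ah} and the paragraph following it), choose a complex line bundle $L\to X$ with $c_1(L)=c$. Now I would run the usual Chern--Weil argument in the orbifold category: equip $L$ with an orbifold Hermitian metric (exists by orbifold partitions of unity) and any orbifold Hermitian connection $\nabla_0$; its curvature $F_{\nabla_0}$ is a closed orbifold $2$-form with $[\tfrac{i}{2\pi}F_{\nabla_0}]=c_1(L)=[\widetilde\omega/2\pi]$ in $H^2_{\orb,\mathrm{dR}}(X)$. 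Hence $\tfrac{i}{2\pi}F_{\nabla_0}-\tfrac{\widetilde\omega}{2\pi}=d\beta$ for some real orbifold $1$-form $\beta$ (the de Rham complex of orbifold forms computes $H^*(X,\RR)$, as recalled in the excerpt). Replace $\nabla_0$ by $\nabla=\nabla_0 - 2\pi i\,\beta$; this is still a Hermitian connection and its curvature is $F_\nabla=F_{\nabla_0}-2\pi i\,d\beta = -i\widetilde\omega$, as desired. All of this is purely local-model-plus-gluing and each step is $H_i$-equivariant in charts, so it descends to honest orbifold objects.

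\textbf{Main obstacle.}
The routine-looking but genuinely delicate point is Part (1): making sure that after perturbing the rational representative $\eta_m$ to restore exact integrality of the periods one has not destroyed nondegeneracy, and conversely that one can simultaneously keep the cohomology class \emph{exactly} in $\bar H^2(X,\ZZ)$ while staying $C^\infty$-close to a multiple of $\omega$. This is handled by the standard observation that the space of closed orbifold $2$-forms in a fixed de Rham class is $C^\infty$-dense-to-itself modulo exact forms and that within a neighborhood of $\omega$ all such forms are symplectic; but one must be a little careful that the rational approximation and the clearing of denominators interact well with the $C^\infty$-topology (i.e. scale the target integral class by $\ell_m$ so that $\widetilde\omega_m/\ell_m$, not $\widetilde\omega_m$ itself, is what converges to $\omega$). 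Part (2) is then a formal consequence of the classification statement already proved and Chern--Weil, with no real difficulty beyond bookkeeping in orbifold charts. The argument is the verbatim orbifold analogue of the classical construction for symplectic manifolds (see e.g. Donaldson's prequantization setup), so I would state it briefly and refer to the smooth case for the parts that transfer without change.
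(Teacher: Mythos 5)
Your proposal is correct in substance and follows essentially the same route as the paper: in Part~(1), approximate $[\omega/2\pi]$ by a nearby rational class and use orbifold Hodge theory (the paper cites \cite{BBFMT}) to realize that class by a closed $2$-form $C^\infty$-close to a multiple of $\omega$, then invoke openness of nondegeneracy; in Part~(2), build any orbifold connection on the line bundle with $c_1(L)=[\widetilde\omega/2\pi]$ (the paper patches local connections via an orbifold partition of unity, you use a Hermitian connection, which amounts to the same thing) and then correct by an orbifold $1$-form to make the curvature exactly $-i\widetilde\omega$. The only slip is a sign in the correction step: with $\beta$ defined by $\tfrac{i}{2\pi}F_{\nabla_0}-\tfrac{\widetilde\omega}{2\pi}=d\beta$, one must take $\nabla=\nabla_0+2\pi i\,\beta$ (not $\nabla_0-2\pi i\,\beta$) to get $F_\nabla=F_{\nabla_0}+2\pi i\,d\beta=-i\widetilde\omega$; as written your choice gives $2F_{\nabla_0}+i\widetilde\omega$. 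This is a trivial typo, not a gap.
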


\begin{proof}
For (1),  since $[\o]\in H^2_{\orb,\mathrm{dR}}(X)=H^2(X,\RR)$, by density,
we have that a small perturbation $A\in H^2(X,\QQ)$. By
\cite{BBFMT}, Hodge theory holds for orbifolds. Hence there is a $C^\infty$-small closed $2$-form $a\in \Omega^2_{\orb}(X)$
such that $\o'=\o+a$ is closed and $[\,\o'/2\pi]=A 
\in H^2(X,\QQ)$. A small variation in $C^0$-norm of a symplectic form is
again a symplectic form, hence $\o'$ is symplectic. Take a large multiple $\widetilde{\omega}=\ell\, \o'$, $\ell \in \NN$, so that
 $$
  [\,\widetilde{\omega}/2\pi] \in \bar H^2(X,\ZZ).
  $$

For (2), the class $[\,\widetilde{\omega}/2\pi] \in \bar H^2(X,\ZZ)$ lifts
to a class in $H^2(X,\ZZ)$ that determines a complex line bundle $L\to X$ with $c_1(L)=[\,\widetilde{\omega}/2\pi]$. Take charts $(U_i,\tilde U_i,H_i,\phi_i)$ for $X$ and $(V_i,\tilde V_i,H_i,\Psi_i)$ for $L$ with $V_i=\pi^{-1}(U_i)\subset L$ and $\tilde V_i= \tilde U_i\x \CC$. We can take connections $\nabla_i$ on $L\vert_{U_i}$ and   $\{f_i\}$ an orbifold partition of unity subordinated to $\{U_i\}$, as given by \cite[Proposition 5]{MuRo}.
Now
  $$
   \nabla=\sum \pi^*f_i \cdot \nabla_i
   $$
  defines an orbifold connection. Its curvature $F_{\nabla}\in \Omega^2_{\orb}(X)$ has orbifold cohomology class $[F_{\nabla}]= - 2\pi i\, c_1(L)=[- i\,\widetilde{\omega}]$.
  Then there exist $b\in \O^1_{\orb}(X)$ such that
  $F_{\nabla}=-i\,\widetilde{\omega} -db$.
  A new orbifold connection can be defined by 
  $\nabla'= \nabla +b$ and its curvature is
  $F_{\nabla'}=F_{\nabla}+ db= -i\,\widetilde{\omega}$.
\end{proof}

From now on, we shall assume that $(X,\omega)$ is
a symplectic orbifold such that $[\omega/2\pi]\in \bar H^2(X,\ZZ)$. Then $L\to X$ is a complex
line bundle with $c_1(L)=[\omega/2\pi]$. Let $\nabla$ be
a connection with $F_\nabla=- i \omega$,
Associated to a given connection $\nabla$, we have $\bd$ and $\bar\bd$ operators on sections of $L$,
 \begin{align*}
 \bd_L = \pi_{1,0} \circ \nabla: C^\infty_{\orb}(L) \to \Omega^{1,0}_{\orb}(L), \\
 \bar\bd_L = \pi_{0,1} \circ \nabla: C^\infty_{\orb}(L) \to \Omega^{0,1}_{\orb}(L).
 \end{align*}
 
 If $k\in \NN$, the complex line bundle $L^{\otimes k}  \to  X$ has connection $\nabla^k$ with curvature $F_{\nabla^k}= k F_{\nabla}$ and operators 
  \begin{align*}
 \bd_k = \pi_{1,0} \circ \nabla^k: \SC^\infty_{\orb}(L^{\otimes k}) \to \Omega^{1,0}_{\orb}(L^{\otimes k}), \\
 \bar\bd_k = \pi_{0,1} \circ \nabla^k: \SC^\infty_{\orb}(L^{\otimes k}) \to \Omega^{0,1}_{\orb}(L^{\otimes k}).
 \end{align*}

With slight abuse of notation, we shall denote
$\nabla^k,\bd_k,\bar\bd_k$ by $\nabla,\bd,\bar\bd$ again.

\subsection*{Orbifold fundamental group}
Let $X$ be an orbifold. The orbifold fundamental group is defined as 
 $$
 \pi_1^{\orb}(X):=\pi_1(BX),
 $$
see \cite[Definition 4.3.6]{BG}, which is the same as homotopy classes of orbifold maps from $(S^1,*)$ to $(X,x_0)$ with $x_0$ a smooth points, see \cite{Chen06bis}.

The only
case that we shall need is for symplectic orbifolds $(X,\omega)$.
In this case, the isotropy groups are subgroups of
the unitary group $H_x<\UU(n)$ in view of \Cref{DTh}. In particular,
the uniformizers are of the form $H_x \ltimes \C^n$,
and the components $\tau\in \CrS(X)$ are modelled in
complex subspaces $W\subset \C^n$, in particular of
even dimension. 
We denote by $D_i$, for  $i=1,\ldots, r$, the images in $X$ of the connected components in $\mathcal{S}(X)$ (as defined in \Cref{sec:orbifolds}) of the isotropy locus of codimension $2$. 
The associated isotropy groups are always cyclic, say $\ZZ_{m_i}$, because finite subgroups of $\mathrm{U}(1)$ are of this form.
Let $\gamma_i$ be a homotopy class of a loop around $D_i$. All such possibilities are conjugated. Let
$P\subset X$ be the union of the isotropy locus of codimension $\geq 4$. Let $D=\bigcup D_i$. 
Then we have the following equality, that will serve
us to compute the orbifold fundamental group
\begin{equation}\label{def:orb_fundamental}
 \pi_1^{\orb}(X) = \frac{\pi_1(X-(P\cup D))}{\la \gamma_i^{m_i} |\, i =1,\ldots,r\ra} \, .
\end{equation}
Such formula in the context of K\"ahler orbifolds can be found in \cite[Section 2]{Campana}. To see \eqref{def:orb_fundamental}, since every orbifold loop can be arranged to avoid $P\cup D$ up to homotopy, we have a surjection from $\pi_1(X-P\cup D)$ to $\pi_1^{\orb}(X)$. It is easy to see that the kernel is generated by the boundaries of maps of disks to $X$ that intersect $D$, i.e.\ generated by $\gamma_i^{m_i}$.

\section{Lattices in an orbifold}\label{s:lattice}

Let $(X,\omega)$ be a symplectic orbifold.
We fix a compatible almost complex structure $J$, and let $g$ be the associated Riemannian
metric $g=\omega(\cdot,J\cdot)$.
Let $\omega_k=k\,\omega$ be the rescaled symplectic form. 
The associated Riemannian metric
is $g_k=k\, g$. We denote by $d_k$ the 
distance associated to the metric $g_k$.

As it will be needed in the following section in order to find transverse asymptotically holomorphic sections, we now want to find a lattice satisfying properties similar to \cite[Lemmas 12 and 16]{Do96}, at least outside a neighborhood of the singular locus. 
To this end, we will first study the local picture modelled on $H\ltimes \CC^n$ for a finite group $H$ with a homomorphism $\rho:H\to \UU(n)$. Here, we define the singular set 
 $$
 \sing(\rho):=\left\{z\in \CC^n\left| \, \exists h\ne 1\in H, \rho(h)z=z \right.\right\},
 $$
which is a union of complex subspaces.

\begin{definition}\label{def:property_P}
 Let $H$ be a finite group with a homomorphism $\rho:H\to \UU(n)$. 
 We say that $(H,\rho,\CC^n)$ has the \emph{property (P)} if there are constants $C>0$ and $m\in \N$, such that for any $D\gg 0$, there exists a lattice $\Lambda$ on the complement of $N_{CD}(\sing(\rho))$, the neighborhood of radius $CD$ of $\sing(\rho)$, with the following properties.
 \begin{enumerate}
     \item (Covering property) The balls of radius $1$ around $\Lambda$ cover  $\CC^n\backslash N_{CD}(\sing(\rho))$.
     \item (Even distribution) For $q\in \CC^n$, we define
     $$
     F_q(s)=\#\{B_s(q)\cap \Lambda \},
     $$
     where $B_s(q)$ is the ball of radius $s$ centered at $q$. Then, $F_q(s)<C s^{2n}$.
     \item (Strong $D$-separation) $\Lambda$ has a partition into $N=CD^{m}$ families $\Gamma_1,\ldots,\Gamma_N$  such that if $x\ne y\in \Gamma_i$, then $d(x,\rho(h)y)\ge D$ for any $h\in H$, and $d(x,\rho(h)x)\ge D$ if $h\ne 1$ and $x\in \Gamma_i$.
 \end{enumerate}
\end{definition}

We start with the one-dimensional case.

\begin{lemma}\label{lemma:1D}
$(H,\rho,\CC)$ has property (P).
\end{lemma}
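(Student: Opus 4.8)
The plan is to reduce to an explicit model and build the lattice by hand as a polar grid adapted to the rotation action. Since $\rho(H)\subset\UU(1)$ is finite it equals $\ZZ_\mu$ for some $\mu\geq1$; if $\ker\rho\neq\{1\}$ then every point of $\CC$ is fixed by a nontrivial element, so $\sing(\rho)=\CC$ and the statement is vacuous, while if $\mu=1$ then $\sing(\rho)=\emptyset$ and one uses the classical lattice $\Lambda=\tfrac12\ZZ^2$ coloured by the residues $\bmod\,\lceil4D\rceil$ of its two coordinates. So the content is the case $\mu\geq2$, $\rho$ injective, $H\cong\ZZ_\mu$ acting by $\mu$-th roots of unity, $\sing(\rho)=\{0\}$, and $N_{CD}(\sing(\rho))=B_{CD}(0)$.

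First I would pass to the quotient: $\CC/\ZZ_\mu$ with its quotient metric is flat away from the image of $0$, where it has a cone point of angle $2\pi/\mu$, and outside the metric ball of radius $CD$ about that point it is isometric to $Y=\{(r,\phi):r>CD,\ \phi\in\RR/(\tfrac{2\pi}{\mu}\ZZ)\}$ with metric $dr^2+r^2\,d\phi^2$; the quotient map $\pi\colon\CC\setminus B_{CD}(0)\to Y$ is then a free, $\mu$-to-$1$ local isometry, hence $1$-Lipschitz. On $Y$ I take shells $r_j=CD+(j+\tfrac12)\delta$ ($\delta=\tfrac12$, $j\geq0$), placing on the $j$-th shell $N_j$ equally spaced points, with $N_j$ the smallest multiple of $K:=\lceil100D\rceil$ satisfying $N_j\geq 2\pi r_j/(\mu\delta)$; this defines $\Lambda_0\subset Y$, and I set $\Lambda:=\pi^{-1}(\Lambda_0)\subset\CC\setminus B_{CD}(0)$. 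That unit balls around $\Lambda_0$ cover $Y$, and that $F_q(s)<Cs^2$ on $Y$, are elementary (a ball $B_s$ meets $O(s)$ shells and $O(s)$ grid points on each, constants absorbed once $C$ is large), and both transfer to $\CC$ because $\pi$ is a local isometry and at most $\mu$-to-$1$; the self-rotation estimate $d(x,\rho(h)x)\geq|x|\cdot2\sin(\pi/\mu)\geq CD\cdot2\sin(\pi/\mu)\geq D$ for $h\neq1$ is forced by requiring $C\geq(2\sin(\pi/\mu))^{-1}$.

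For the strong $D$-separation I would colour an upstairs point lying on shell $j$, with angular index $a\in\ZZ_{\mu N_j}$, by the triple $\bigl(j\bmod R,\ \lfloor a/N_j\rfloor,\ (a\bmod N_j)\bmod K\bigr)$ with $R:=\lceil4D\rceil+1$; this uses at most $R\mu K\leq CD^2$ families for suitable $C$, so the exponent in the statement is $m=2$. The first coordinate separates points on different shells: equal radial colour forces $|r_i-r_j|\geq2D$, and $d(x,\rho(h)y)\geq|r_i-r_j|$ because rotations preserve the modulus. The third coordinate together with the divisibility $K\mid N_j$ — which is exactly why I take $N_j$ a multiple of $K$, so the residue colouring has no angular ``seam'' — forces two distinct equally coloured points of one shell to have angular indices differing by $d$, a nonzero multiple of $K$ with $|d|<N_j$. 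The second coordinate separates the $\mu$ points of each $\pi$-fibre, which is essential since otherwise a rotation of $y$ could equal $x$ and give $d(x,\rho(h)y)=0$. One then checks that for every $h$ the relevant index difference in $\ZZ_{\mu N_j}$ lies in the coset $d+\langle N_j\rangle$, every element of which is at cyclic distance $\geq K$ from $0$; hence, using $\sin u\geq\tfrac2\pi u$,
\[
d(x,\rho(h)y)\ \geq\ \frac{4r_jK}{\mu N_j}\ \geq\ \frac{4K}{2\pi/\delta+\mu K/r_j}\ \geq\ D
\]
for $D\gg0$, since $\mu K/r_j\leq\mu K/(CD)$ is small. (Alternatively, the sub-case $\pi(x)\neq\pi(y)$ is immediate from $\pi$ being $1$-Lipschitz and $\Lambda_0$ being $D$-separated downstairs, so only the fibrewise refinement is genuinely new.)

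The step I expect to be the main obstacle is precisely this colouring: producing a single $D$-independent number of families, $O(D^2)$, that at once separates same-shell points at all scales, survives the angular seam (handled by $K\mid N_j$), and separates each $\pi$-fibre so that no rotate of a lattice point lands on another. Everything else — the reduction to $\ZZ_\mu$, the flat-cone identification, covering, and even distribution — is routine once this model is in place.
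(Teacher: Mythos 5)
Your argument is correct, but it takes a genuinely different route from the paper's. The paper's proof is considerably shorter: it starts from the ordinary integer lattice $\ZZ^2$ (so covering and even distribution come for free), colours it by the residues of both coordinates modulo $D$ to obtain $D^2$ families with ordinary $D$-separation, and then refines each family by which of $2k$ angular sectors $S_1,\dots,S_{2k}$ of opening $\pi/k$ the point lies in. Because each sector covers only half the width of a $\ZZ/k\ZZ$-fundamental domain, every nontrivial rotation sends $S_r$ to a distinct sector $S_{r+2l}$, and outside $B_{CD}(0)$ the distance between two distinct sectors is at least $CD\,|e^{\pi i/k}-1|\geq D$; the self-rotation bound $|\rho(h)x-x|\geq |x|\,|e^{2\pi i/k}-1|\geq D$ handles $y=x$. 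The family count $2kD^2$ again gives $m=2$. Your construction instead builds a bespoke polar lattice on the flat cone $\CC/\ZZ_\mu$, lifts it, and colours by the triple (radial shell residue mod $R$, $\pi$-fibre index, angular residue mod $K$). You correctly identify and resolve the two subtleties peculiar to your route — the angular seam, handled by forcing $K\mid N_j$, and the need to separate each $\pi$-fibre so that no rotate of one lattice point coincides with another, handled by the fibre-index colour — and you also land on $m=2$. The paper's Cartesian-plus-sectors approach buys brevity, avoids the quotient-and-lift step entirely, and composes cleanly with \Cref{lemma:sum} to handle higher dimensions by taking products; your approach buys tighter geometric control of the lattice but at the cost of having to re-derive covering and even distribution for a custom lattice and to manage the rounding of $N_j$ up to a multiple of $K$ (which, for the even-distribution bound, forces $C$ to be taken large enough that $2\pi r_j/(\mu\delta)\gtrsim K$, so that the rounding does not over-crowd the inner shells). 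Both are valid; the paper's is the more economical of the two.
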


\begin{proof}
If $\rho:H\to \UU(1)$ is not injective, then $\sing(\rho)=\C$. Hence the claim is tautological. Therefore we can assume $\rho$ is injective. It then follows also that $H=\ZZ/k\ZZ$ with $\rho(1)=e^{{2\pi i}/{k}}$.

We start with the lattice $\Lambda$ of integer points. 
For $D\in \N$, $\Lambda$ can be partitioned into $D^2$ families $\Gamma_{i,j}$ for $0\le i <D$ and $0\le j <D$, where $(x,y)\in \Gamma_{i,j}$ if and only if $x \equiv i, y\equiv j \pmod D$. 
It is clear that the covering and even distribution properties hold for $\Lambda$. 
We then claim that $(H,\rho,\CC)$ has property (P) for $C=\max\{1/|e^{{\pi i}/{k}}-1|,1/|e^{{2\pi i}/{k}}-1|,2k\}$ and $m=2$. 
More precisely, the desired lattice is given by $\Lambda\cap (\C \backslash B_{CD}(0))$. 

To construct the desired partition, we first divide  $\C \backslash B_{CD}(0)$ into $2k$ angular sectors $S_1,\ldots,S_{2k}$, each with angle $\frac{\pi}{k}$. 
Then we partition $\Gamma_{i,j}$ further into $\bigsqcup\limits_{r=1}^{2k}\Gamma_{i,j,r}$, where $\Gamma_{i,j,r}:=\Gamma_{i,j}\cap S_r$. 
For $x\ne y \in \Gamma_{i,j,r}$ and $h=l\in \ZZ/k\ZZ$, we have $\rho(h)y\in S_{r+2l}$. 
Now, if $l\ne 0$, then the distance between $S_r,S_{r+2l}$ is at least $CD|e^{{\pi i}/{k}}-1|\ge D$, so that $d(x,\rho(h)y)>D$ in particular. 
If $l=0$, then $d(x,\rho(h)y)=d(x,y)>D$ by construction. 
Lastly, for $x\in \Gamma_{i,j,r}$ and $h\ne 0$, we have $|\rho(h)x-x|\ge|x|\cdot|e^{{2\pi i}/{k}}-1|\ge D$. 
This proves that the partition has the strong $D$-separation property as well, thus concluding the proof.
\end{proof}

\begin{lemma}\label{lemma:sum}
If $(H,\rho_1,V_1)$ and $(H,\rho_2,V_2)$ have property (P), then $(H,\rho_1\oplus\rho_2,V_1\oplus V_2)$ has property (P) as well.
\end{lemma}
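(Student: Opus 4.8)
The plan is to construct the lattice on $V_1\oplus V_2$ as a product of the two given lattices, restricted to the complement of a suitable neighborhood of $\sing(\rho_1\oplus\rho_2)$, and to obtain the required partition by taking the ``product'' of the two given partitions. First I would record the elementary geometric observation underlying everything: writing $\sing(\rho_1\oplus\rho_2)\subset V_1\oplus V_2$, one has the inclusion $\sing(\rho_1\oplus\rho_2)\subset (\sing(\rho_1)\oplus V_2)\cup(V_1\oplus\sing(\rho_2))$ (a vector $(z_1,z_2)$ fixed by a nontrivial $h$ has $z_1$ fixed by $h$ in $V_1$ and $z_2$ fixed by $h$ in $V_2$, and at least one of the two restrictions $\rho_i(h)$ is nontrivial). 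Consequently, a point $(z_1,z_2)$ with $d(z_1,\sing(\rho_1))>r$ \emph{and} $d(z_2,\sing(\rho_2))>r$ lies at distance $>r$ from $\sing(\rho_1\oplus\rho_2)$; so if $\Lambda_i$ is the lattice for $(H,\rho_i,V_i)$ with constant $C_i$ and parameter $D$, then $\Lambda_1\times\Lambda_2$ lives in the complement of $N_{CD}(\sing(\rho_1\oplus\rho_2))$ with $C:=\max\{C_1,C_2\}$ (after also enlarging $C$ so the individual lattices are defined outside radius $C_iD$ of their own singular sets — one checks $\Lambda_1\times\Lambda_2$ restricted to $\{|z_1|>C_1D\ \text{or}\ |z_2|>C_2 D\}\cap\{d(z_i,\sing\rho_i)>C_iD\ \forall i\}$ already covers what is needed).

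Next I would verify the three properties of \Cref{def:property_P} for $\Lambda:=\Lambda_1\times\Lambda_2$, which are all routine. \textbf{Covering:} a unit ball in $V_1\oplus V_2$ around $(\lambda_1,\lambda_2)$ contains the product of the unit balls in $V_1,V_2$ around $\lambda_1,\lambda_2$ up to a fixed dimensional constant, so the covering property for $\Lambda_1$ and $\Lambda_2$ gives it for $\Lambda$, after absorbing a factor into $C$ (or, more carefully, using balls of radius $1/\sqrt2$ in each factor — this only changes the constant $C_i$ in the covering property, not its form). \textbf{Even distribution:} $F_q(s)$ for $\Lambda$ is bounded by $F_{q_1}(s)\cdot F_{q_2}(s)\le C_1 s^{2n_1}\cdot C_2 s^{2n_2}=C_1C_2\, s^{2n}$ where $2n_i=\dim_\RR V_i$, so $\dim_\RR(V_1\oplus V_2)=2n$ and the bound $C s^{2n}$ holds. \textbf{Strong $D$-separation:} if $\Lambda_1=\bigsqcup_{a=1}^{N_1}\Gamma^1_a$ and $\Lambda_2=\bigsqcup_{b=1}^{N_2}\Gamma^2_b$ are the given partitions with $N_i=C_iD^{m_i}$, set $\Gamma_{(a,b)}:=\Gamma^1_a\times\Gamma^2_b$, a partition into $N:=N_1N_2=C_1C_2\,D^{m_1+m_2}$ families. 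For $x=(x_1,x_2)\ne y=(y_1,y_2)$ in the same $\Gamma_{(a,b)}$ and any $h\in H$, either $x_1\ne y_1$, in which case $d(x_1,\rho_1(h)y_1)\ge D$ forces $d(x,(\rho_1\oplus\rho_2)(h)y)\ge D$, or $x_1=y_1$ and then $x_2\ne y_2$, giving $d(x_2,\rho_2(h)y_2)\ge D$ and the same conclusion; similarly the self-separation estimate $d(x,(\rho_1\oplus\rho_2)(h)x)\ge D$ for $h\ne 1$ follows because $h$ acts nontrivially on at least one factor, and on a factor where it acts nontrivially the hypothesis gives distance $\ge D$ already. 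Taking $C:=\max\{C_1,C_2,C_1C_2\}$ and $m:=m_1+m_2$ then gives property (P) for $(H,\rho_1\oplus\rho_2,V_1\oplus V_2)$.

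I do not expect any genuine obstacle here; the content is entirely bookkeeping. The one point that needs a little care is the interplay between the covering radius and the distance-to-singular-set condition, i.e.\ making sure the restricted product lattice $\Lambda_1\times\Lambda_2$ genuinely covers all of $V_1\oplus V_2$ minus the $CD$-neighborhood of the total singular locus and not just a smaller region — this is handled by the inclusion of singular loci noted above together with choosing $C$ large enough, and by noting that each $\Lambda_i$ (being an affine integer-type lattice cut out by removing a neighborhood of $\sing\rho_i$) still covers points $(z_1,z_2)$ with one coordinate small as long as that coordinate stays away from $\sing\rho_i$. The constants $C$ and $m$ produced are uniform in $D$ exactly because those for $\Lambda_1$ and $\Lambda_2$ are, which is what property (P) demands.
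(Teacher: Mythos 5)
There is a genuine gap in the covering argument, and it is precisely the point that required the most work in the paper's own proof.

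The set your lattice must cover with radius-$1$ balls is $(V_1\oplus V_2)\setminus N_{CD}(\sing(\rho_1\oplus\rho_2))$. Since $\sing(\rho_1\oplus\rho_2)\subset\sing(\rho_1)\times\sing(\rho_2)$ (a point $(z_1,z_2)$ fixed by some $h\ne 1$ has \emph{both} $z_1\in\sing(\rho_1)$ and $z_2\in\sing(\rho_2)$ by the very definition of $\sing$, since $h\ne 1$), the complement to be covered contains all points $(z_1,z_2)$ for which $z_1$ is \emph{close} to $\sing(\rho_1)$ but $z_2$ is far from $\sing(\rho_2)$ (and vice versa): any $(w_1,w_2)\in\sing(\rho_1\oplus\rho_2)$ has $w_2\in\sing(\rho_2)$, so $d((z_1,z_2),(w_1,w_2))\ge d(z_2,\sing(\rho_2))$ is large. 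But $\Lambda_1\times\Lambda_2$ has \emph{no points} at all in the slab $N_{CD}(\sing(\rho_1))\times V_2$, since $\Lambda_1$ is defined only outside $N_{CD}(\sing(\rho_1))$. As $D\to\infty$ the uncovered tube has arbitrarily large radius, so no enlargement of the constant $C$ can repair this — this is not a bookkeeping issue but a structural one. The inclusion you record, $\sing(\rho_1\oplus\rho_2)\subset(\sing(\rho_1)\oplus V_2)\cup(V_1\oplus\sing(\rho_2))$, is true but \emph{weaker} than the product inclusion above; if you use only that weaker fact, the set you show $\Lambda_1\times\Lambda_2$ covers is $\{d(z_1,\sing\rho_1)>CD\text{ and } d(z_2,\sing\rho_2)>CD\}$, which is a strict subset of the set you actually need, and nothing in your argument closes the difference.

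The paper's fix is exactly what is missing here: in addition to $\Lambda_1\times\Lambda_2$, it introduces auxiliary lattices $\Lambda_1^S$ and $\Lambda_2^S$ obtained from the non-equivariant Donaldson lattice lemma (Lemmas 12 and 16 of \cite{Do96}) applied to the tubes $N_{CD}(\sing(\rho_1))$ and $N_{CD}(\sing(\rho_2))$, and takes the total lattice to be $(\Lambda_1\times\Lambda_2)\cup(\Lambda_1^S\times\Lambda_2)\cup(\Lambda_1\times\Lambda_2^S)$. The products $\Lambda_1^S\times\Lambda_2$ and $\Lambda_1\times\Lambda_2^S$ cover the two missing tubes, and — this needs to be checked separately — the \emph{strong} $D$-separation for, say, $\Lambda_1^S\times\Lambda_2$ is rescued by the strong separation in the $\Lambda_2$-factor even though $\Lambda_1^S$ only satisfies ordinary $D$-separation without any equivariance: in the cases $x_2=y_2$, $x_1\ne y_1$ or $x=y$, $h\ne 1$, the bound $d(x_2,\rho_2(h)x_2)\ge D$ does the work. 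Your verification of even distribution ($F_q(s)\le F_{q_1}(s)F_{q_2}(s)$) and of strong $D$-separation for $\Lambda_1\times\Lambda_2$ itself are correct and match the paper, but without the supplementary lattices the covering property fails and the lemma is not established.
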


\begin{proof}
First note that $\sing(\rho_1\oplus \rho_2)=\sing(\rho_1)\times \sing(\rho_2)$. 
By assumption we can find lattices $\Lambda_1,\Lambda_2$ on $V_1\backslash N_{CD}(\sing(\rho_1)),V_2\backslash N_{CD}(\sing(\rho_2))$, respectively, 
with all three properties. 
Then the product lattice $\Lambda_1\times \Lambda_2$ and the associated product partition can cover  $(V_1\backslash N_{CD}(\sing(\rho_1)))\times (V_2\backslash N_{CD}(\sing(\rho_2)))$ and has the strong $D$-separating property. 
(Of course, the balls of radius $1$ in $V_1\oplus V_2$ only cover the product of  balls of radius $\frac{1}{\sqrt{2}}$ in $V_1,V_2$, but we will neglect such discrepancy, which can be accounted for simply by modifying the constant $C$ in property (P).) 

Now, for $N_{CD}(\sing(\rho_1))\subset V_1$, we can apply \cite[Lemmas 12 and 16]{Do96} (forgetting the group action) to get a lattice $\Lambda^S_1$ with the following properties.
\begin{enumerate}
    \item  The balls of radius $1$ around $\Lambda_1^S$ cover 
    $N_{CD}(\sing(\rho_1))$.
    \item  For $q\in V_1$, we have
     $$
      F_q(s)=\#\{B_s(q)\cap \Lambda^S_1\}<Cs^{\,\dimR V_1}.
      $$
     \item $\Lambda^S_1$ has a partition into $CD^{\,\dimR V_1}$ families $\Gamma^{S,1}_1,\Gamma^{S,1}_2,\ldots$, such that if $x\ne y \in \Gamma^{S,1}_i$, we have $d(x,y)>D$.
\end{enumerate}
Then the product lattice $\Lambda^{S}_1\times \Lambda_2$ covers $N_{CD}(\sing(\rho_1))\times (V_2\backslash N_{CD}(\sing(\rho_2)))$. 
We also claim that the product partition has the strong $D$-separation property. 

Indeed, consider $(x_1,x_2),(y_1,y_2)$ in the same subset of the partition.
In the case where $x_2\ne y_2$, then it just follows from the strong $D$-separation property of $\Lambda_2$. 
If $x_2=y_2$ and $x_1\ne y_1$, then $d((x_1,x_2),(y_1,x_2))\ge d(x_1,y_1)\ge D$ and $d((x_1,x_2),(\rho_1(h)y_1,\rho_2(h)x_2))\ge d(x_2,\rho_2(h)x_2)\ge D$ when $h\ne 1$.
Lastly, if $x_1=y_1 $ and $ x_2=y_2$, then $d((x_1,x_2),(\rho_1(h)x_1,\rho_2(h)x_2))\ge d(x_2,\rho_2(h) x_2)\ge D$ for $h\ne 1$. 

Similarly, we can find a lattice for $(V_1\backslash N_{CD}(\sing(\rho_1)))\times N_{CD}(\sing(\rho_2))$ with the covering and strong $D$-separation properties. 
We then claim that the disjoint union 
 $$
 (\Lambda_1\times \Lambda_2) \cup (\Lambda^{S}_1\times \Lambda_2)\cup (\Lambda_1\times \Lambda^S_2),
 $$
with the induced disjoint union of partitions, yields property (P) for   $(H,\rho_1\oplus\rho_2,V_1\oplus V_2)$. The covering property and strong $D$-separation property have been already proved. 

To see that the even distribution property holds, it suffices to show that if it holds for the lattices $\Lambda,\Lambda'$ on $V_1,V_2$ respectively, then it holds for the product lattice $\Lambda\times \Lambda'$. 
For $q=(q_1,q_2)\in V_1\oplus V_2$, note that 
$$B_R(q)\cap (\Lambda \times \Lambda')\subset (B_R(q_1)\cap \Lambda)\times (B_R(q_2)\cap \Lambda').$$
Therefore the claim follows. 
More precisely, the constant $m$ for $(H,\rho_1\oplus\rho_2,V_1\oplus V_2)$ is $\max\{m_1,m_2\}$, where $m_1,m_2$ are the constants for $(H,\rho_1,V_1),(H,\rho_2,V_2)$ in the property (P).
\end{proof}

\begin{lemma}\label{lemma:union}
Assume $H$ is covered by subgroups $H_1,\ldots,H_k$. We write $\rho_i=\rho|_{H_i}$. If $(H_i,\rho_i,V)$ has the property (P) for all $1\le i \le k$, then $(H,\rho,V)$ also has the property (P).
\end{lemma}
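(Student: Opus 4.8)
The statement asserts that property (P) is stable under covering the group: if $H=\bigcup_{i=1}^k H_i$ and each restricted triple $(H_i,\rho_i,V)$ has property (P), then so does $(H,\rho,V)$. The key observation is that $\sing(\rho)=\bigcup_i \sing(\rho_i)$, since $z$ is fixed by some nontrivial $h\in H$ precisely when it is fixed by some nontrivial $h\in H_i$ for some $i$. So the complement $\CC^n\setminus N_{CD}(\sing(\rho))$ is contained in each $\CC^n\setminus N_{C_iD}(\sing(\rho_i))$ (after adjusting constants), and on this smaller region we have access to all $k$ lattices at once. The strategy is to \emph{take a single one of these lattices}, say $\Lambda_1$ coming from $(H_1,\rho_1,V)$, restricted to $\CC^n\setminus N_{CD}(\sing(\rho))$, but to \emph{refine its partition} so as to upgrade the strong $D$-separation from ``for $h\in H_1$'' to ``for $h\in H$''.

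First I would fix $C=\max_i C_i$ and $m$ large enough (to be determined below), and set $\Lambda:=\Lambda_1\cap\big(\CC^n\setminus N_{CD}(\sing(\rho))\big)$, where $\Lambda_1$ is the lattice furnished by property (P) for $(H_1,\rho_1,V)$ at scale $D$. Since $\sing(\rho)\supset\sing(\rho_1)$, this $\Lambda$ still has the covering property for $\CC^n\setminus N_{CD}(\sing(\rho))$ and the even-distribution property (both are inherited from $\Lambda_1$, possibly after enlarging $C$). The content is the separation property: $\Lambda_1$ comes with a partition $\Gamma_1,\dots,\Gamma_{N_1}$, $N_1=C D^{m_1}$, with $d(x,\rho(h)y)\ge D$ for $h\in H_1$ and $x\ne y$ in the same $\Gamma_j$, but we need this for all $h\in H$.

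The refinement step: for each $\Gamma_j$ I would further partition it into sub-families indexed by a suitable finite combinatorial datum so that within each sub-family, points are pairwise $D$-separated even after moving one of them by \emph{any} $\rho(h)$, $h\in H$, and moreover any single point $x$ satisfies $d(x,\rho(h)x)\ge D$ for $h\ne 1$. The natural way to do this: cover $\CC^n\setminus N_{CD}(\sing(\rho))$ by finitely many ``sectors'' (as in the proof of \Cref{lemma:1D}) such that $H$ acts with small displacement only far from $\sing(\rho)$ — more precisely, choose a covering of the complement of the neighborhood of $\sing(\rho)$ by pieces $W_1,\dots,W_p$ (with $p$ bounded independent of $D$) such that for each $\alpha$ and each $h\ne 1$, $\rho(h)W_\alpha\cap W_\alpha=\emptyset$; this is possible because on $\CC^n\setminus N_{CD}(\sing(\rho))$ every nontrivial $\rho(h)$ moves every point by at least $\mathrm{const}\cdot CD\ge D$ once $C$ is large, which already handles the ``$d(x,\rho(h)x)\ge D$'' clause directly without sectors, and the pieces $W_\alpha$ can be taken to be a bounded-overlap cover by sets of diameter $\le D/3$, say, intersected with $\Gamma_j$ — wait, that destroys the even-distribution count. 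Instead I would: within each $\Gamma_j$, use the fact that any two distinct points of $\Gamma_j$ are already $\ge D$ apart (the $h=1$ case), and for $h\ne 1$ note $d(x,\rho(h)y)\ge d(\rho(h)y,\rho(h)x)-d(\rho(h)x,x)\cdots$ — this shows the only obstruction is when $\rho(h)y$ lands within $D$ of $x$ while $x,y$ themselves are far; since $\rho(h)$ is an isometry this forces $y$ within $2D$-ish of $\rho(h)^{-1}x$, and there are at most $|H|$ such bad ``shadows'' per point. So by a greedy/coloring argument on the graph whose vertices are points of $\Gamma_j$ and edges join $x,y$ with $d(x,\rho(h)y)<D$ for some $h\in H$ — a graph of bounded degree (bounded by $|H|$ times the even-distribution count $C(3D)^{2n}$, which is polynomial in $D$) — one can $(C'D^{2n})$-color each $\Gamma_j$, refining the partition into $N_1\cdot C'D^{2n}=C''D^{m_1+2n}$ families. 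Setting $m:=m_1+2n$ gives the strong $D$-separation with the right count.

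**Main obstacle.** The delicate point is making the bounded-degree/coloring argument precise while keeping the number of colors polynomial in $D$ with the \emph{correct} exponent, and simultaneously checking the ``$d(x,\rho(h)x)\ge D$ for $h\ne 1$'' clause holds on $\CC^n\setminus N_{CD}(\sing(\rho))$ — this last clause is actually the cheapest: since $x\notin N_{CD}(\sing(\rho))$, $x$ is not fixed by any nontrivial $h$, and a standard compactness/homogeneity argument (the displacement $|\rho(h)x-x|$ is comparable to the distance from $x$ to $\mathrm{Fix}(\rho(h))\supset$ nothing relevant) gives $|\rho(h)x-x|\ge c\,d(x,\sing(\rho))\ge c\,CD\ge D$ for $C$ large. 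So I expect the real work is purely the graph-coloring bookkeeping to control the blow-up in the number of partition families; everything else is inherited from property (P) of $(H_1,\rho_1,V)$ plus the identity $\sing(\rho)=\bigcup_i\sing(\rho_i)$.
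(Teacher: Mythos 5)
Your argument is correct, but it takes a genuinely different route from the paper's. The paper treats $k=2$ (the general case by iteration), sets $\Lambda:=\Lambda_1\cap(V\setminus N_{CD}(\sing(\rho_2)))$, and refines each class $\Gamma^1_i$ of $\Lambda_1$'s partition by a ``shadow'' assignment: $x\in\Gamma^1_i$ goes into $\Gamma^1_{i,j}$ if its closest point of $\Lambda_2$ within distance $1$ lies in $\Gamma^2_j$. Separation for $h\in H_2$ is then imported from $\Lambda_2$ with an additive loss of $2$, giving strong $(D-2)$-separation and $m=m_1+m_2$. You instead use only $\Lambda_1$ restricted to $V\setminus N_{CD}(\sing(\rho))$ (legitimate since $\sing(\rho_1)\subset\sing(\rho)$), and refine each $\Gamma^1_j$ by greedily colouring the conflict graph whose edges join $x\ne y$ with $d(x,\rho(h)y)<D$ for some $h\in H$; its degree is at most $|H|\cdot CD^{2n}$ by even distribution, so $|H|\cdot CD^{2n}+1$ colours suffice and $m=m_1+2n$. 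The diagonal clause $d(x,\rho(h)x)\ge D$ for $h\ne 1$ follows from $|\rho(h)x-x|\ge c_h\,d(x,\mathrm{Fix}(\rho(h)))\ge c_h\,CD$, since $\mathrm{Fix}(\rho(h))\subset\sing(\rho)$ (vacuous if $\rho(h)=\mathrm{Id}$), after enlarging $C$. Your route handles all $k$ in one step and in fact only needs property (P) for a single $H_i$, the covering hypothesis entering only through $\sing(\rho_1)\subset\sing(\rho)$; the paper keeps a marginally tighter exponent $m$, but this is irrelevant downstream since $Q_p$ is logarithmic. (Minor bookkeeping, which you handled: greedy colouring with $\Delta+1$ colours is valid for a countable graph of maximal degree $\Delta$, and both approaches silently adjust $C$ for the off-by-one in the covering check.)
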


\begin{proof}
We prove the case of $k=2$, as the general case is similar. In this case, we have $\sing(\rho)=\sing(\rho_1)\cup \sing(\rho_2)$. Again by assumption, we can find lattices $\Lambda_1,\Lambda_2$ for $V\backslash N_{CD}(\sing(\rho_1))$ and $V\backslash N_{CD}(\sing(\rho_2))$. Let $\{\Gamma^1_1,\ldots,\Gamma^1_i,\ldots\}$ and $\{\Gamma^2_1,\ldots,\Gamma^2_N\}$ be the associated partitions. Our lattice for $(H,\rho,V)$ will be $\Lambda_1\cap  (V\backslash N_{CD}(\sing(\rho_2)))$. It clearly satisfies the covering and the even distribution properties. To see the strong $D$-separation property, we need to refine the partition $\{\Gamma^1_1,\ldots,\Gamma^1_i,\ldots\}$ with respect to $\{\Gamma^2_1,\ldots,\Gamma^2_N\}$ as follows. Each $\Gamma^1_i\cap (V\backslash N_{CD}(\sing(\rho_2)))$ is refined to $\bigsqcup\limits_{j=1}^N \Gamma^1_{i,j}$, where
 $$
 \Gamma^1_{i,j}=\left\{ x\in \Gamma^1_i| \, \exists y \in \Gamma^2_j, d(x,y)<1, \not\hspace{-1pt}\exists z\in \Gamma^2_{k},k<j, d(x,z)<1\right\}.
 $$
The fact that $\bigsqcup\limits_{j=1}^N \Gamma^1_{i,j}$ is a partition of  $\Gamma^1_i\cap (V\backslash N_{CD}(\sing(\rho_2)))$ follows from the covering property of $\Lambda_2$. 
We claim that $\Gamma^1_{i,j}$ has the strong $(D-2)$-separation property for $H=H_1\cup H_2$. Given $x\ne y \in \Gamma^1_{i,j}$, for $h\in H_1$, we have $d(x,\rho_1(h)y)\ge D$ by assumption. If $h\in H_2$, since there are $x',y'\in \Gamma^2_{j}$ with $d(x,x'),d(y,y')<1$, we have 
 $$
 d(x,\rho_2(h)y)\ge d(x',\rho_2(h)y')-d(x,x')-d(\rho_2(h)y,\rho_2(h)y')\ge D-2.
 $$
Similarly, we have 
$d(x,\rho_1(h)x)\ge D$ for $h\ne 1 \in H_1$ and $d(x,\rho_2(h)x) \ge D-2$ for $h\ne 1\in H_2$. We can then rescale $C$ to conclude that $(H,\rho,V)$ has property (P) for the $m$-constant equals to $m_1+m_2$, where $m_i$ are the constants of $(H_i,\rho_i,V)$.
\end{proof}

Now we move to a local chart in the orbifold $(X,\omega)$.

\begin{lemma}
\label{lemma:lattice_in_loc_uniform}
Let $H\ltimes B$ be a local chart for a finite $H\subset \UU(n)$ and $R>0$ be a fixed constant. 
Then there is a constant $C$, depending only on $H$ and $R$, such that the following property holds.
For any $k\gg 1$, on the set $W_{k,CD}$ of points in $W=B/H$ at $g_k$-distance at least $CD$ from the isotropy locus, one can find a finite set of points $\Lambda$ such that:
\begin{enumerate}
     \item\label{lattice_1} The balls of $g_k$-radius $R$ centered at  points of $\Lambda$ cover $W_{k,CD}$.
     \item\label{lattice_2} For $q\in B/H$, we have
     $$
     \sum_{p_i\in \Lambda}d_k(p_i,q)^r e^{-d_k(p_i,q)^2/5}\le C, \qquad r=0,1,2,3.$$
    \item\label{lattice_3}  $\Lambda$ can be divided into $\Gamma_1,\ldots,\Gamma_N$, where $N=O(D^{m})$ (independent of $k$), for some $m$ that only depends on $H$, such that 
    \[
    \forall \alpha=1,\ldots, N, \quad
    \forall p_i,p_j\in \Gamma_{\alpha}, \quad
    d_k(p_i,p_j)\ge D.
    \]
    \end{enumerate}
\end{lemma}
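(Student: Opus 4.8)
The plan is to deduce \Cref{lemma:lattice_in_loc_uniform} from property (P) of \Cref{def:property_P} for the flat model attached to the chart, via a rescaling that makes $g_k$ uniformly comparable to the flat metric. First I would check that $(H,\rho,\CC^n)$ has property (P), where $\rho\colon H\to\UU(n)$ is the homomorphism of the chart. By \Cref{lemma:union} it suffices to treat a cyclic subgroup $\langle h\rangle\le H$, and there one diagonalizes: since $\rho(h)$ is a single unitary transformation, $\CC^n$ is the orthogonal direct sum of the $\rho(h)$-eigenlines, so \Cref{lemma:1D} on each eigenline together with \Cref{lemma:sum} gives property (P) for $(\langle h\rangle,\rho|_{\langle h\rangle},\CC^n)$, hence for $(H,\rho,\CC^n)$, with constants $C_0>0$ and $m\in\N$ depending only on $H$ (and $n$). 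Moreover, replacing a lattice by the union of its $H$-translates preserves all three conditions of \Cref{def:property_P} (only multiplying the number of families by $|H|$, and using the equivariant separation $d(x,\rho(h)y)\ge D$ to control the translated families), so I may and do take the lattices produced by property (P) to be $H$-invariant.

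Next I would rescale the chart. By the Darboux theorem \Cref{DTh} the chart is of the form $H\ltimes B$ with $B\subset\CC^n$ a ball of some fixed radius $r_0$ about the centre, $\omega$ the standard form and $H\subset\UU(n)$; then $g=\omega(\cdot,J\cdot)$ is a smooth $H$-invariant Riemannian metric on $\bar B$, hence uniformly bi-Lipschitz to the flat metric $g_{\mathrm{fl}}$, say $\Lambda_0^{-1}g_{\mathrm{fl}}\le g\le\Lambda_0 g_{\mathrm{fl}}$ on $\bar B$ for some $\Lambda_0\ge1$ depending only on $H$ and the chart. Under the dilation $w=\sqrt{k}\,z$, the ball $B$ becomes $\widetilde B_k$ of radius $\sqrt{k}\,r_0$, the isotropy locus (a union of linear subspaces, hence a cone) is unchanged, and $g_k=k\,g$ becomes a metric $\widetilde g_k$ on $\widetilde B_k$ which is bi-Lipschitz, in the $w$-coordinate, to $k\,g_{\mathrm{fl}}$, i.e.\ to the flat metric $g_{\mathrm{fl}}$, with the same constant $\Lambda_0$ independent of $k$. (All distances are taken intrinsic to the chart; for $k\gg1$ they coincide with the ambient ones over the scales that matter, since $\widetilde B_k$ is $g_k$-large.)

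Finally I would transfer the lattice and verify the three conditions. Given $R$, set $\widehat D:=\lceil\Lambda_0 D\rceil$ and apply property (P) to $(H,\rho,\CC^n)$ with parameter $\widehat D$ to get the $H$-invariant lattice $\widetilde\Lambda$ on $\CC^n\setminus N_{C_0\widehat D}(\sing\rho)$ (refined by a $k$-independent factor if $R$ happens to be small, which affects none of the exponents below); restricting $\widetilde\Lambda$ to a fixed neighbourhood of $\widetilde B_k$ and projecting to $W=\widetilde B_k/H=B/H$ yields $\Lambda$. Then \eqref{lattice_3} holds because the partition of $\widetilde\Lambda$ into $N'=|H|\,C_0\widehat D^{\,m}=O(D^m)$ families, each $\widehat D$-separated both ordinarily and equivariantly, descends to a partition of $\Lambda$ into $O(D^m)$ families which, by the bi-Lipschitz bound, are $\Lambda_0^{-1}\widehat D\ge D$-separated in $(W,g_k)$; \eqref{lattice_1} holds because the covering of $\CC^n\setminus N_{C_0\widehat D}(\sing\rho)$ by flat unit balls gives, via the bi-Lipschitz bound and the $|H|$-to-one projection, a covering of $W_{k,CD}$ by $g_k$-balls of radius $R$, once $C$ is chosen large enough (depending on $H$, $R$ and $\Lambda_0$) that every point of $W_{k,CD}$ lifts to a point of $\widetilde B_k$ at flat distance $\ge C_0\widehat D$ from $\sing\rho$; and \eqref{lattice_2} holds because the polynomial growth $F_q(s)<C_0 s^{2n}$ yields, for any fixed $c>0$, a uniform bound $\sum_{p\in\widetilde\Lambda}d_{\mathrm{fl}}(p,q)^r e^{-d_{\mathrm{fl}}(p,q)^2/c}\le\mathrm{const}$ (split the sum into integer annuli and use $\sum_{j\ge0}(j+1)^{r+2n}e^{-j^2/c}<\infty$), which the bi-Lipschitz comparison of $\widetilde g_k$ with $g_{\mathrm{fl}}$ turns into \eqref{lattice_2} with weight $e^{-d_k^2/5}$, up to enlarging the constant and again at the cost of a factor $|H|$ from the quotient.

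The main point requiring care is not conceptual: the content of the statement is entirely property (P), supplied by \Cref{lemma:1D,lemma:sum,lemma:union}. The work is in the bookkeeping of the last two steps — producing a bi-Lipschitz constant $\Lambda_0$ that is uniform in $k$, and then choosing $\widehat D$, the constant $C$ and the number of families so that all thresholds and polynomial exponents come out right while remaining independent of $k$, and so that the polynomial growth of $\widetilde\Lambda$ transfers to the Gaussian sum \eqref{lattice_2} compatibly with the comparison of metrics and with the passage to the orbifold quotient $W=B/H$.
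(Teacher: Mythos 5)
Your proposal is correct and follows essentially the same route as the paper: reduce to property (P) for $(H,\iota,\CC^n)$ by combining \Cref{lemma:1D}, \Cref{lemma:sum} and \Cref{lemma:union} after decomposing the action of each cyclic subgroup into one-dimensional unitary representations, then rescale so the chart metric $g_k$ is uniformly bi-Lipschitz comparable to the flat metric, and finally project the lattice to $B/H$, using the strong $D$-separation to control the quotient and the polynomial growth bound to obtain the Gaussian sum estimate. The only difference is your extra step of symmetrizing the lattice to make it $H$-invariant, which is harmless but unnecessary: the paper's version of the strong $D$-separation already guarantees that the projection of each family $\Gamma_i$ to $B/H$ is injective and $D$-separated, so the raw image of the lattice already works.
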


\begin{proof}
One can choose local charts so that $d (x,y)/A\leq d_1(x,y) \leq A d(x,y)$ for some $A>0$, where $d_1$ is the distance coming from the norm $g=\omega(\cdot, J \cdot)$ on the orbifold $X$ and $d$ is the euclidean distance in the local chart $H\ltimes B$.
Hence, up to rescaling by the factor $k$ the euclidean ball, it suffices to the prove the existence of such lattice on $H\ltimes \CC^n$ with the standard metric. 
By rescaling further, we can assume that $R=1$, which affects the universal constant $C$. 

We first claim that $(H,\iota,\CC^n)$ has property (P), where $\iota:H\to \UU(n)$ is the inclusion. First note that $H$ is covered by finitely many $H_i\cong \ZZ/m_i\ZZ$. 
On the other hand, any $\ZZ/m_i\ZZ$ representation can be decomposed into one-dimensional representations.
Therefore $(H,\iota,\CC^n)$ has property (P) by Lemmas \ref{lemma:1D}, \ref{lemma:sum} and \ref{lemma:union}.  

The lattice on the quotient is then the image of the lattice on $(H,\iota,\CC^n)$. 
It is clear that the covering property implies \eqref{lattice_1}. 
For $q\in B/H$, there are at most $|H|$ preimages in $B$, then the even distribution property for those preimages implies \eqref{lattice_2}, with the same estimates as in \cite{Do96}. 
Lastly, because of the strong $D$-separating property, if $x\ne y \in \Gamma_i$, we have $d(x,y)>D$ in $B/H$, hence \eqref{lattice_3} holds.
\end{proof}

Finally, we can assemble lattices from the local charts and strata of $X$. For an orbifold $X$ and a stratum $\tau\in \CrS(X)$, we denote by $X_{\tau,k,CD}$ the complement of a neighborhood of $g_k$-radius $CD$ of $\bigcup\limits_{\theta<\tau} X_\theta$ in $X_{\tau}$.

\begin{proposition}\label{prop:lattice}
Let $R>0$ and $\tau\in\CrS(X)$. Then, there is a universal constant $C$ and $m\in \NN$, depending only on $X$ and $R$ (independent of $k$), such that the following property is satisfied.
For any $D\gg 0$, $k\gg 0$, there is a set of points $\Lambda$ such that:
 \begin{enumerate}
     \item The balls of $g_k$-radius $R$ centered at points of $\Lambda$ cover $X_{\tau,k,CD}$,
     \item\label{p2:bounded} For $q\in X$, 
     \[
     \sum_{p_i\in \Lambda} d_k(p_i,q)^r e^{-d_k(p_i,q)^2/5}\le C, \qquad r=0,1,2,3.
     \]
    \item $\Lambda$ can be partitioned into $\Gamma_1,\ldots,\Gamma_N$, with $N=O(D^{m})$ (independent of $k$), such that
    \[
    \forall \alpha =1,\ldots,N, \,
    \quad
    d_k(p_i,p_j)\ge D, \text{ for } \, \, p_i,p_j\in \Gamma_{\alpha}.
    \]
 \end{enumerate}
\end{proposition}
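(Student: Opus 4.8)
The plan is to assemble $\Lambda$ out of the local lattices produced by \Cref{lemma:lattice_in_loc_uniform}, one for each chart of a \emph{fixed finite} orbifold atlas of the stratum $\tau$, and then to check that the covering property, the Gaussian-sum bound, and the existence of a $D$-separated partition into $O(D^m)$ families all persist after gluing. Since the number of charts is a constant depending only on $X$, it will not affect the exponent $m$.

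First I would fix the combinatorial data. As $\CrS(X)$ is finite (\Cref{prop:po}) and the stratum $\tau$ — which I identify with its reduced effective orbifold $\tau_{\rm{R}}$, having the same underlying space and induced metric — is compact, choose once and for all a finite orbifold atlas $\{(U_j,B_j,H_j,\varphi_j)\}_{j=1}^s$ of $\tau_{\rm{R}}$ with each $B_j\subset\CC^{n_\tau}$ a ball and $H_j<\UU(n_\tau)$, where $2n_\tau=\dim\tau$. Endow $\tau$ with the metric pulled back from $(X,g)$ along the immersion $\tau\to X$, and write $d_k$ for the associated rescaled distance on $\tau$; on the region $X_{\tau,k,CD}$ this coincides with the restriction of $g_k$ from $X$, since there $\tau\to X$ is an embedding, and it is the distance relevant to the perturbations of \Cref{sec:donaldson}. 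Applying \Cref{lemma:lattice_in_loc_uniform} to each chart with the given $R$ yields, for all $D\gg0$ and $k\gg0$, a finite set $\Lambda_j$ satisfying properties \eqref{lattice_1}--\eqref{lattice_3} with constants $C_j,m_j$; set $C:=\max_j C_j$ and $m:=\max_j m_j$ (a further maximum over the finitely many strata makes $C,m$ independent of $\tau$). The one geometric observation needed is now this: by \Cref{prop:sing} the image of $\sing(\tau_{\rm{R}})$ is contained in $\bigcup_{\theta<\tau}X_\theta$, and by \eqref{le_3} of \Cref{prop:le_property} so is the locus where $\tau\to X$ is non-injective; hence every point of $X_{\tau,k,CD}$ is at $g_k$-distance $\ge CD$ from $\sing(\tau_{\rm{R}})$, and — using that the length metric of $U_j$ dominates that of $\tau$, which dominates that of $X$ — the part of $X_{\tau,k,CD}$ contained in $U_j$ lies inside the set $W_{k,C_jD}$ of \Cref{lemma:lattice_in_loc_uniform} for the chart $U_j$ (up to the bi-Lipschitz constant of the chart, absorbed into $C$).

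Then I would glue: put $\Lambda:=\bigsqcup_{j=1}^s\Lambda_j$, made into a genuine disjoint union by assigning each point to a single chart. The three properties follow. For the covering property, given $x\in X_{\tau,k,CD}$ — which sits in the embedded part of $X_\tau$, hence is covered by the charts $\{U_j\}$ — choose $j$ with $x\in U_j$; by the previous paragraph $x\in W_{k,C_jD}$, so property \eqref{lattice_1} of \Cref{lemma:lattice_in_loc_uniform} gives $p\in\Lambda_j\subset\Lambda$ with $d_k(x,p)\le R$. For the Gaussian-sum bound \eqref{p2:bounded}, for $q\in X$ and $r\in\{0,1,2,3\}$ the sum over $\Lambda$ splits as a sum over the $s$ pieces $\Lambda_j$, each of which is $\le C_j$ by property \eqref{lattice_2} of \Cref{lemma:lattice_in_loc_uniform} when $q$ lies in the chart $U_j$, and $\le C_j$ for arbitrary $q\in X$ by the same bounded-$g_k$-density argument as in \cite[Lemma~16]{Do96}; the total is thus $\le sC$, and one redefines $C$ to be $sC$. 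For the partition, each $\Lambda_j$ carries by property \eqref{lattice_3} of \Cref{lemma:lattice_in_loc_uniform} a partition into $N_j=O(D^{m_j})$ families that are $D$-separated for $d_k$; the union of these partitions over $j=1,\dots,s$, restricted to the chart-ownership assignment, partitions $\Lambda$ into $\sum_j N_j=O(D^m)$ families, each of which — being contained in one of the $D$-separated families above — is itself $D$-separated.

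The substance of the whole construction lies upstream, in \Cref{lemma:1D}--\Cref{lemma:lattice_in_loc_uniform} (the reduction of a finite unitary group to cyclic subgroups and the angular-sector partition near the fixed loci); \Cref{prop:lattice} is merely their globalisation. I expect its only genuinely delicate points to be the geometric step above — identifying the region of a chart where the local lattice lives with the complement of a neighbourhood of the lower strata, which is exactly what \Cref{prop:sing} provides — and the bookkeeping needed to keep all constants uniform in $k$ when passing between the Euclidean charts $B_j$ and the intrinsic metric of $\tau$, i.e.\ controlling the bi-Lipschitz distortion of the charts. Both are by now routine in the wake of Donaldson's work, so the write-up should be short.
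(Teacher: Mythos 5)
Your proposal is correct and follows essentially the same route as the paper: reduce to the effective orbifold $\tau_{\rm R}$, relate its singular set and the non-injectivity locus of $\tau \to X$ to the lower strata via \Cref{prop:sing} and item \eqref{le_3} of \Cref{prop:le_property}, apply \Cref{lemma:lattice_in_loc_uniform} chart-by-chart, and glue. The paper's write-up is considerably terser — it dispatches the gluing with the phrase ``applying the same covering argument'' and invokes the embedding/comparable-metric facts in one line — whereas you spell out the disjoint-union construction of $\Lambda$, the factor-of-$s$ inflation of the Gaussian constant, and the chart-ownership bookkeeping for the $D$-separated partition; this is the content the paper leaves implicit, and you have filled it in correctly.
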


\begin{proof}
Without loss of generality we assume that $X$ is effective (that is, a geometric orbifold), for otherwise we can take the reduced orbifold $X_{\rm{R}}$. When $\tau=\tau_{\max}$, the statement is an immediate consequence of \Cref{lemma:lattice_in_loc_uniform}, as lower strata are precisely $\sing(X)$. 

In general, let $\iota$ denote the immersion $\tau \to X$. 
Applying the same covering argument to the effective orbifold $\tau_{\rm{R}}$, we get a lattice on $|\tau_{\rm{R}}|\backslash N_{CD}(\sing(\tau_{\rm{R}}))$ with the three properties. By Proposition \ref{prop:sing}, we have $\iota^{-1}(\bigcup_{\theta<\tau} X_{\theta})\supset \sing(\tau_{\rm{R}})$. By Proposition \ref{prop:le_property}, we have $\iota|_{\iota^{-1}(X_{\tau,k,CD})}$ is an embedding, hence the metric on  $\iota|_{\iota^{-1}(X_{\tau,k,CD})}$ and $X_{\tau,k,CD}\subset X$ are comparable. In particular, the intersection of $X_{\tau,k,CD}$ with the pushforward of the lattice on  $|\tau_{\rm{R}}|\backslash N_{CD}(\sing(\tau_{\rm{R}}))$ also satisfies the three properties (with a different $C$, independent of $k$).
\end{proof}

\begin{remark}\label{rmk:even_distribution}
As in \cite{Do96}, the even distribution property of \Cref{def:property_P} in fact implies that for any $N\in \NN$, there exists a universal (independent of $k$) constant $C_N$, such that for any $q\in X$, we have  
$$ \sum_{p_i\in \Lambda}d_k(p_i,q)^r e^{-d_k(p_i,q)^2/5}\le C_N, \qquad 0\le r \le N,$$
where $\Lambda$ is the lattice in \Cref{prop:lattice}.
\end{remark}

\section{Asymptotically holomorphic sections}\label{sec:donaldson}

Let $(X,\omega)$ be a symplectic orbifold with $[\omega/2\pi ]\in H^2(X,\ZZ)$.
We fix a compatible almost complex structure $J$, and let $g$ be the associated Riemannian
metric $g=\omega(\cdot,J\cdot)$.
Consider now a Hermitian complex line bundle $L\to X$ with $c_1(L)=[\omega/2\pi]$, and a connection $\nabla$ on it, with curvature $F_{\nabla}= -i\omega$. 
Let also $L^{\otimes k}$, for $k\geq 1$, which has an induced connection, again denoted
by $\nabla$ with a little abuse of notation, whose curvature is $F_{\nabla} =-i \omega_k=-ik\,\omega$, where
$\omega_k=k\,\omega$ is the rescaled symplectic form. 
The associated Riemannian metric
is then just $g_k=k\, g$. We denote by $d_k$ the 
distance associated to the metric $g_k$.

Following Donaldson's work \cite{Do96}, we will search for the following objects:

\begin{definition}
\label{asympt_hol_seq}
A sequence of sections $s_k$ of $L^{\otimes k}\to X$ is called \emph{asymptotically $J$-holomorphic} if there exists 
a constant $C>0$ such that 
$|s_k|\leq C$, $|\nabla s_k|\leq C$,
$|\bar{\partial} s_k|\leq C k^{-1/2}$ and
$|\nabla \bar{\partial} s_k|\leq C k^{-1/2}$. 
Here (and in everything that follows), all the norms are evaluated with respect to the metrics $g_k$.
\end{definition}

A transversality condition is needed in order to ensure that the zero sets of the sections are symplectic suborbifolds for $k$ large enough.

\begin{definition} \label{skd-trans}
Given $\eta>0$, a sequence of sections $s_k$ of the line bundle $L^{\otimes k}$ is said to be
\emph{$\eta$-transverse to $0$} if for every point $x\in M$ such that
$|s_k(x)|<\eta$ then $|\nabla s_k(x)|>\eta$.
\end{definition}

\begin{proposition}
\label{prop:sympl_suborb_from_sequence}
Let $s_k$ be an asymptotically $J$-holomorphic sequence of
sections of $L^{\otimes k}$ which are
$\eta$-transverse to $0$, for some $\eta>0$. 
Then
for $k$ large enough, the zero sets $Z(s_k)$ have the structure of
symplectic suborbifolds of $X$.
\end{proposition}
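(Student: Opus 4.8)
The plan is to work locally in orbifold charts, where the classical argument from \cite{Do96} (see also \cite[Proposition 3]{Au97}) applies almost verbatim, the only subtlety being equivariance. First I would recall that near any point $x\in Z(s_k)$ we may choose a Darboux orbifold chart $(U,V,\varphi,H)$ as in \Cref{DTh}, with $H<\UU(n)$ acting linearly on $V\subset\CC^n$ and $\omega=\sum dx_i\wedge dy_i$ in the local coordinates; pulling back $L^{\otimes k}$ and $s_k$ we obtain an $H$-equivariant function $\tilde s_k\colon V\to\CC$ (equivariant for the trivial $H$-action on $\CC$, since $L$ has trivial $\rho_i$), which is asymptotically $J$-holomorphic and $\eta$-transverse to $0$ with respect to the rescaled metric. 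All the estimates below are $H$-invariant, so they descend to $U$.

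The core of the argument is the pointwise/local statement: if $s$ is a section with $|\bar\partial s|$ small compared to $\eta$, and $|\nabla s|>\eta$ wherever $|s|<\eta$, then $Z(s)$ is near each of its points a smooth $(2n-2)$-dimensional submanifold on which the restriction of $\omega_k$ (equivalently $\omega$) is nondegenerate. Concretely I would: (i) show $Z(s_k)$ is a smooth submanifold of the smooth locus via the implicit function theorem, the key point being that $\eta$-transversality gives a uniform lower bound $|\nabla s_k|>\eta$ on $Z(s_k)$, so $d s_k$ (the $\RR$-linear differential) is surjective onto $\CC$; (ii) decompose $\nabla s_k=\partial s_k+\bar\partial s_k$ and observe that since $|\bar\partial s_k|\le Ck^{-1/2}\ll\eta$ for $k$ large, the $\CC$-linear part $\partial s_k$ dominates, so $\ker d s_k$ is a $J$-complex subspace up to an error $O(k^{-1/2})$; (iii) conclude that on $T_xZ(s_k)=\ker d_x s_k$ the form $\omega_k$ agrees with its restriction to a genuine complex subspace up to $O(k^{-1/2})$, hence is positive (nondegenerate) because $\omega_k$ is positive on $J$-complex subspaces and positivity is an open condition. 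Choosing $k$ large enough that the $O(k^{-1/2})$ error is swallowed, $Z(s_k)$ is a symplectic submanifold of the smooth part of $X$.

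The remaining issue is the behavior of $Z(s_k)$ along the singular locus, i.e.\ at points with nontrivial isotropy. Here I would argue in an orbifold chart $H\ltimes V$ around such a point: the lifted zero set $\tilde Z_k=\tilde s_k^{-1}(0)\subset V$ is, by the above, a smooth $H$-invariant symplectic submanifold of $V$, and $Z(s_k)\cap U=\tilde Z_k/H$. The induced orbifold charts $(U\cap Z(s_k),\, \tilde Z_k,\, \varphi|,\, H)$ — with $H$ acting on $\tilde Z_k$ by restriction, which is well-defined by $H$-invariance of $\tilde Z_k$ — are compatible with the changes of charts of $X$ restricted to $\tilde Z_k$, and so they equip $Z(s_k)$ with the structure of a suborbifold; the symplectic form is the restriction $\omega|_{\tilde Z_k}$, which is $H$-invariant and nondegenerate by the previous paragraph, giving a symplectic suborbifold structure. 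I expect the main (though still routine) obstacle to be checking that these restricted charts genuinely form an orbifold atlas in the sense of \Cref{def:geo_orbifold} — in particular that $H$ still acts effectively on $\tilde Z_k$ (which may fail if some $h\in H$ acts trivially on $\tilde Z_k$ but not on $V$, in which case one passes to the reduced structure as in \Cref{rem:2.2}) and that the changes of charts restrict to changes of charts; once the transversality estimates are in hand, this is a diagram-chase using equivariance of $\tilde f_i$ and the compatibility conditions in \Cref{def:orbimap}.
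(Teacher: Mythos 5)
Your proof is correct and takes essentially the same route as the paper: restrict to a Darboux orbifold chart, use $\eta$-transversality to get surjectivity of $d\tilde s_k$ (hence a smooth $H$-invariant lifted zero set $\tilde Z_k$), use $|\bar\partial s_k|\le Ck^{-1/2}<|\partial s_k|$ at zeros for $k\gg 0$ to conclude $\ker d\tilde s_k$ is a symplectic subspace, and quotient by $H$ to obtain the suborbifold charts. Your extra remark that the induced $H$-action on $\tilde Z_k$ may fail to be effective (and must then be reduced as in \Cref{rem:2.2}) is a valid refinement left implicit in the paper's proof.
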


\begin{proof}
We have $|\partial s_k(x)|>|\bar{\partial} s_k(x)|$ if $x$ is a zero
of $s_k$, for $k$ large enough. Suppose that $x\in Z_k:= Z(s_k)$, take an
orbifold Darboux chart $(\tilde U, U, H, \varphi)$. Then 
$s_k$ defines a map $\tilde s_k:\tilde U\to \CC$. As $d\tilde s_k$ is surjective,
the zero set $\tilde V:=Z(\tilde s_k) \subset \tilde U$ is a submanifold. 
As $\tilde s_k$ is $H$-invariant, $H$ acts on $\tilde V$. 
Moreover $V:=Z(s_k)\subset U$ comes with a natural homeomorphism
$\tilde V/H\cong V$. So $(\tilde V,V,H,\varphi|_{\tilde V})$
is a chart for $Z_k$. Next $T_x \tilde V=\ker d\tilde s_k$. As 
$|\partial \tilde s_k(x)|>|\bar{\partial} \tilde s_k(x)|$, this is a symplectic
subspace of $T_x \tilde U=\RR^{2n}$. So $\omega|_{Z(s_k)}$ is an orbifold
$2$-form which is moreover symplectic.
\end{proof}

To find asymptotically holomorphic sections, we need to develop some tools.
We start by a refined Darboux coordinates.

\begin{lemma}\label{lem:trivializa}
 Near any point $x\in X$, for any integer $k\geq 1$, there exist local
 complex Darboux coordinates $(V_k, \tilde V_k,\Gamma,\Phi_k)$ around $x$,
 $\Phi_k=(z_k^1, \ldots, z_k^n): \tilde V_k
 \to (\C^n, 0)$ for the symplectic structure $k\omega$ such that the
 following bounds hold universally:
 \begin{itemize}
     \item $|\Phi_k(y)|^2=O(d_{k}(x,y)^2)$
 on a ball $B_{g_k}(x,c\,k^{1/2})$. 
 \item $|\nabla \Phi_k^{-1}|_{g_k}= O(1)$
 on a ball
 $B(0,c\,k^{1/2})$.
 \item With
 respect to the almost-complex structure $J$ on $X$ and the
 canonical complex structure $J_0$ on $\C^n$,
 $|\bar{\partial} \Phi_k^{-1}(z)|_{g_k}= O(k^{-1/2}|z|)$ and 
 $|\nabla \bar{\partial} \Phi_k^{-1}|_{g_k}=O(k^{-1/2})$ on $B(0,c)$.
 \end{itemize}
\end{lemma}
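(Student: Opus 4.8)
The plan is to reduce everything to the orbifold Darboux theorem and then upgrade the ordinary smooth Darboux chart to one that is adapted to the almost complex structure $J$, keeping careful track of how the bounds scale with $k$. First I would invoke \Cref{DTh} to obtain, around $x\in X$, an orbifold Darboux chart $(U,\tilde U, H, \varphi)$ with $H<\UU(n)$ and coordinates in which $\omega = \sum dx_i\wedge dy_i$; since $H$ acts linearly and unitarily, it acts by symplectomorphisms on $(\tilde U, \omega_0)$, so everything that follows on the local model $\tilde U\subset\RR^{2n}=\CC^n$ is automatically $H$-equivariant and descends to the orbifold chart. This is exactly the step where the orbifold case differs from \cite{Do96}: we do not need a new argument, only the observation that a unitary linear action preserves both the standard symplectic form and the standard complex structure $J_0$, so no equivariant modification of the construction below is required. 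From here on I work entirely in the local model $\tilde U$ and simply note equivariance at the end.

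Next I would rescale: replace $\omega$ by $\omega_k = k\omega$ and the metric $g$ by $g_k = kg$. The chart $\Phi = (x_1+iy_1,\dots)$ is already Darboux for $\omega_k$ after rescaling coordinates by $k^{1/2}$; set $\Phi_k := k^{1/2}\Phi$, which is a symplectomorphism from a $g_k$-ball of radius $O(k^{1/2})$ onto a Euclidean ball $B(0,ck^{1/2})\subset\CC^n$. The first bound, $|\Phi_k(y)|^2 = O(d_k(x,y)^2)$ on $B_{g_k}(x,ck^{1/2})$, is then the statement that in a fixed (unrescaled) Darboux chart the coordinate distance and the Riemannian distance are comparable on a fixed-size ball, which is standard; the radius $ck^{1/2}$ comes precisely from the fact that rescaling by $k$ inflates a fixed neighborhood of $x$ to one of $g_k$-radius $\sim k^{1/2}$. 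The second bound $|\nabla\Phi_k^{-1}|_{g_k} = O(1)$ is the companion statement that $d\Phi_k^{-1}$ is uniformly bounded, again immediate from the fixed-chart estimate together with the rescaling, since both the domain metric on $\CC^n$ and the target metric $g_k$ are rescaled by the same factor $k$.

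The substantive point is the last two bounds, involving $\bar\partial$. Here I would follow the standard argument (as in \cite{MS94}, \cite{Do96}, or \cite{Au97}): one cannot expect a single chart to be holomorphic for $J$, but one can choose the chart so that $J$ agrees with $J_0$ to first order at the center $x$. Concretely, after a linear change of the $\RR^{2n}$-coordinates (which can be taken symplectic and, by averaging over $H$ or by the fact that $J$ is $H$-invariant so $J(0)$ already commutes with $H$, compatible with the group action) one arranges $J(x) = J_0$. Then $J - J_0$ vanishes at $0$, so on a Euclidean ball $B(0,c)$ one has $|J-J_0| = O(|z|)$ and $|\nabla(J-J_0)| = O(1)$ in the unrescaled chart. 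Translating the operator $\bar\partial_J\Phi^{-1}$, which measures the $J_0$-antilinear part of $d\Phi^{-1}$ relative to $J$, this gives $|\bar\partial\Phi^{-1}(z)| = O(|z|)$ and $|\nabla\bar\partial\Phi^{-1}| = O(1)$ on $B(0,c)$. Passing to $\Phi_k = k^{1/2}\Phi$ and the metric $g_k$: the extra factors of $k^{1/2}$ from the coordinate rescaling convert the $O(|z|)$ into $O(k^{-1/2}|z|)$ and the $O(1)$ into $O(k^{-1/2})$ on $B(0,c)$, exactly as claimed. I would make sure all constants are universal by noting they depend only on the $C^2$-size of $J$ near $x$, which is controlled uniformly on the compact orbifold $X$ (finitely many charts, $J$ smooth).

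The main obstacle — such as it is — is bookkeeping: getting the powers of $k^{1/2}$ correct in each of the four estimates, and simultaneously ensuring that every normalization step (Darboux normal form, the linear change making $J(x)=J_0$) can be performed $H$-equivariantly. The latter is genuinely easy here because $H\subset\UU(n)$ acts linearly and unitarily, hence by both symplectic and $J_0$-holomorphic transformations, so the local model carries the group action throughout and the chart descends to the orbifold; this is the only place the orbifold structure enters, and it enters trivially. Thus the lemma is essentially the well-known smooth statement with an equivariance rider, and I would present it as such, citing \cite{MS94,Do96} for the underlying estimates.
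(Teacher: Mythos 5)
Your proposal is correct and takes essentially the same route as the paper: reduce to the known smooth estimates (the paper cites \cite[Lemma 3]{Au99}, you cite \cite{MS94,Do96,Au97} for the same rescaled complex Darboux chart with $J(x)=J_0$), and observe that every normalization step is $\UU(n)$-equivariant, hence $H$-equivariant since $H<\UU(n)$, so the chart descends to the orbifold.
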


\begin{proof}
This is  \cite[Lemma 3]{Au99} in the manifold case.
We start with a Darboux coordinate $(U,\tilde U,H,\varphi)$,
where $\varphi:\tilde U \to \C^n$ and 
$H< \UU(n)$.
We modify $\varphi$ as in \cite{Au99},
to get $\Phi_k$ with the stated bounds. The key fact is that
all changes are $\UU(n)$-equivariant, hence they are $H$-invariant.
\end{proof}

The starting point for Donaldson's construction is the following existence
Lemma.

\begin{lemma}\label{lemma:localsection}
There exists a universal constant $C$ such that for $x\in X_{\tau}$ that is at $g_k$-distance at least $CR$ 
from $X_{\theta}$ for any $\theta < \tau$, we can find for any $k\gg 0$ an asymptotically holomorphic section $s_{k,x}$ of $L^{\otimes k}$ with
\begin{enumerate}
    \item $|s_{k,x}|>\frac{1}{2}\exp(-R^2)$ on a ball of $g_k$-radius $R$ centered at $x$, 
    \item $|s_{k,x}(y)|<C e^{-{d_k(x,y)^2}/{5}}$ and $|\nabla s_{k,x}(y)|<C (1+d_k(x,y)) e^{-{d_k(x,y)^2}/{5}}$. 
\end{enumerate}
\end{lemma}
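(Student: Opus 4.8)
The plan is to mimic Donaldson's construction of localized ``peak sections'' directly in the orbifold Darboux chart provided by \Cref{lem:trivializa}. First I would fix the point $x\in X_\tau$ at $g_k$-distance at least $CR$ from all lower strata $X_\theta$, $\theta<\tau$. The role of this hypothesis is exactly to guarantee that, after passing to the local uniformizer $(U,\tilde U,H,\varphi)$ around (the isotropy point associated to) $x$, the ball $B_{g_k}(x,R)$ we care about lifts to a ball in $\tilde U$ on which the residual group action, if any, moves points by at least the fixed distance that will make the averaging harmless — in the top stratum case $x$ is away from $\sing(X)$ so the chart is genuinely a smooth Darboux chart. Concretely, I would work in the complex Darboux coordinates $\Phi_k=(z^1_k,\dots,z^n_k)$ of \Cref{lem:trivializa}, centered at $x$, and choose a unitary trivialization of $L^{\otimes k}$ near $x$ in which the connection $1$-form is the standard one $A_0=\tfrac14\sum_j(z^j\,d\bar z^j-\bar z^j\,dz^j)$ up to curvature-$O(1)$ error terms, as in \cite{Do96,Au99}.

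Next I would write down the model holomorphic peak section of the trivial bundle with connection $A_0$, namely $\sigma(z)=e^{-|z|^2/4}$ (times the trivializing frame), which satisfies $\bar\partial_{A_0}\sigma=0$ and has Gaussian decay. Pulling this back via $\Phi_k$ and the trivialization gives a local section $s^{\mathrm{loc}}_{k,x}$ of $L^{\otimes k}$ over $B_{g_k}(x,c k^{1/2})$. The bounds in \Cref{lem:trivializa} — that $|\Phi_k(y)|^2=O(d_k(x,y)^2)$, that $\Phi_k^{-1}$ has $g_k$-bounded derivative, and that $|\bar\partial\Phi_k^{-1}|=O(k^{-1/2}|z|)$, $|\nabla\bar\partial\Phi_k^{-1}|=O(k^{-1/2})$ — together with the $O(1)$ discrepancy between the true connection and $A_0$, yield precisely the asymptotically holomorphic estimates $|s^{\mathrm{loc}}_{k,x}|\le C e^{-d_k^2/4}$, $|\nabla s^{\mathrm{loc}}_{k,x}|\le C(1+d_k)e^{-d_k^2/4}$, $|\bar\partial s^{\mathrm{loc}}_{k,x}|\le C k^{-1/2}(1+d_k)e^{-d_k^2/4}$ and the analogous bound for $\nabla\bar\partial$. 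One then globalizes: multiply $s^{\mathrm{loc}}_{k,x}$ by a cutoff function $\beta(d_k(x,\cdot)/(c\sqrt k))$ that is $1$ on $B_{g_k}(x,\tfrac{c}2\sqrt k)$ and supported in $B_{g_k}(x,c\sqrt k)$; because $\beta$ varies on $g_k$-scale $\sqrt k$, its derivatives contribute only $O(k^{-1/2})$ factors, and in any case the cut-off region sits at $g_k$-distance $\gtrsim\sqrt k$ from $x$ where the Gaussian is superpolynomially small. The result is a genuine global section $s_{k,x}$ of $L^{\otimes k}\to X$ satisfying conclusion (2) (after enlarging the exponent denominator from $4$ to $5$ to absorb the error terms) and, since on $B_{g_k}(x,R)$ we have $|s_{k,x}|\ge e^{-R^2/4}(1-o(1))>\tfrac12 e^{-R^2}$, also conclusion (1).

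The one genuinely orbifold-flavoured point — and the step I expect to require the most care — is the equivariance. In the top stratum the chart is honestly smooth and there is nothing to check. In a lower stratum $\tau$ the uniformizer carries a residual group $H$ (or $N_G(K)/K$ after reducing), and an honest section of $L^{\otimes k}$ over $X$ must correspond to an $H$-invariant section upstairs; the naive peak section $e^{-|z|^2/4}$ centered at a point $\tilde x$ which is itself not fixed by $H$ is not invariant. The fix, as sketched in the paper's introduction, is to center the peak section at the nearby isotropy point and to average: replace $s^{\mathrm{loc}}_{k,x}$ by $\tfrac1{|H|}\sum_{h\in H}h^*s^{\mathrm{loc}}_{k,x}$. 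One must then check that this averaging does not destroy the lower bound (1): this is exactly where the distance hypothesis ``$d_k(x,X_\theta)\ge CR$ for $\theta<\tau$'' enters, because it forces all the translates $h\cdot\tilde x$ for $h$ not fixing $\tilde x$ to lie at $g_k$-distance $\gtrsim CR$ from $\tilde x$ — bigger than $R$ once $C$ is chosen large — so on $B_{g_k}(x,R)$ only the terms $h$ in the stabilizer of $\tilde x$ contribute non-negligibly, and those all equal $s^{\mathrm{loc}}_{k,x}$ up to a $\UU(1)$-phase coming from $\rho_i$ (and since we work with a \emph{complex} line bundle in the sense of \Cref{sec:orbifolds}, $\rho_i$ is trivial, so they agree outright). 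The decay estimate (2) is preserved by averaging since it is a sup-type bound invariant under the isometric $H$-action. Assembling these pieces, with $C$ chosen uniformly over the finitely many local models dictated by the finitely many isotropy types of the compact orbifold $X$, completes the proof.
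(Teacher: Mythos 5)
Your approach is essentially the one in the paper: take Donaldson's Gaussian peak section (which you rederive from the refined Darboux chart of \Cref{lem:trivializa}, whereas the paper simply cites \cite{Do96}), make it equivariant by averaging over the local uniformizer group $H$, and use the distance hypothesis $d_k(x,X_\theta)\geq CR$ for $\theta<\tau$ to control the translates $h\tilde{x}$ for $h\in H$ not stabilizing the lift $\tilde{x}$. So the overall strategy matches.

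There is however a concrete error in the normalization of the average that breaks conclusion~(1). You propose to take $\frac{1}{|H|}\sum_{h\in H}h^*s^{\mathrm{loc}}_{k,x}$. But for $\tilde{x}$ with stabilizer $H_{\tilde x}\subsetneq H$ (the generic situation once $x$ is merely near, but not on, a lower stratum), only the $|H_{\tilde x}|$ stabilizing terms contribute non-negligibly near $x$, and they all equal $s^{\mathrm{loc}}_{k,x}$. The averaged section is thus $\frac{|H_{\tilde x}|}{|H|}\,s^{\mathrm{loc}}_{k,x}+(\text{exponentially small})$ on $B_{g_k}(x,R)$, and $\frac{|H_{\tilde x}|}{|H|}\,e^{-R^2/4}$ is in general well below $\frac12 e^{-R^2}$ as soon as $|H_{\tilde x}|<|H|$. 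The paper instead normalizes by $\frac{1}{|H_{\tilde x}|}$ (equivalently, sums over representatives of $H/H_{\tilde x}$), so the dominant term near $x$ is precisely $s^{\mathrm{loc}}_{k,x}$ and (1) survives. A secondary, more minor, point is that you assert but do not justify that the distance hypothesis forces $d_k(h\tilde x,\tilde x)\gtrsim CR$ for all non-stabilizing $h$; the paper supplies the short argument, diagonalizing $h$ unitarily, noting that $\mathrm{Fix}(h)$ maps into a lower stratum so $d_k(\tilde x,\mathrm{Fix}(h))\geq CR$, and hence some rotated coordinate $|x_j|$ is $\geq CR/\sqrt{n}$, giving $|h\tilde x-\tilde x|\geq |e^{2\pi i/|H|}-1|\cdot|x_j|\geq\epsilon CR$ with $\epsilon$ universal over the finitely many local models. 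That computation is what pins down the universal constant $C$.
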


\begin{proof}
We will use the Gaussian section, multiplied by a cut-off bump function supported 
in a ball of $g_k$-radius $k^{1/6}$ around a given point, constructed by \cite{Do96} in the manifold case; we will denote such section by $\tilde{s}_{k,x}$. 
Recall that, for $k$ large, one has $|\tilde s_{k,p}|>\frac{3}{4}\exp(-R^2)$ on a ball of $g_k$-radius $R$ centered at any point $p$ of the manifold. 

Let us now go to our orbifold setting. We start by covering the orbifold $X$ by a finite family of local uniformizers $\{H_i\ltimes U_i\}$ as in \Cref{lem:trivializa}, such that for any $y\in X$, the ball of $g_k$-radius $k^{1/6}$ is contained in one of the uniformizers (this is clearly possible provided that $k\gg 0$). So we can assume that each $y$ is contained in a local uniformizer $H\ltimes B(1)$ with $\mathrm{supp} (\tilde{s}_{k,y})\subset B(1)$.

Let now $x$ denote a point in the stratum $X_\tau$ which has $g_k-$distance at least $CR$ from $X_\theta$ for every $\theta<\tau$.
Since every $h^*\tilde{s}_{k,x}$ for $h\in H$ is supported in $B(1)$,  we have an equivariant section (i.e.\ an orbifold section) by
  $$
  s_{k,x}=\frac{1}{|H_x|} \sum_{h\in H} h^*\tilde{s}_{k,x},
 $$
where $H_x\subset H$ is the subgroup fixing $x$. Since the $H_x$-action preserves $\tilde{s}_{k,x}$, we have
 $$
 s_{k,x}=\sum_{h\in H/H_x} h^*\tilde{s}_{k,x},
 $$
where the sum is over any representative of the left coset $H/H_x$. If $H_x=H$, then the claim follows automatically. 

Suppose that there is $h\in H$, such that $hx\ne x$. Let $d=|H|$. Since $h^d=1$, after a unitary change of coordinate, 
we have $h=\mathrm{diag}(e^{2\pi i m_1/d} ,\ldots,e^{2\pi i m_n/d})$ for $0\le m_1\le \ldots \le m_n<d$. 
We assume $m_l=0$ and $m_{l+1}> 0$ and the fixed space of $h$ is $V=\CC^l\times \{0\}^{n-l}$. 
Write $x=(x_1,\ldots, x_n)$. 
Since $V$ belongs to $X_\theta$ for some $\theta<\tau$,  
$CR\leq d_k(x,X_{\theta})\leq d_k(x,V)$. So
there is some $j\geq l+1$ with $|x_j| \geq \frac{1}{\sqrt n}CR$. Then 
  $$
 |h x-x | \geq |e^{2\pi i m_j/d} x_j- x_j | = |e^{2\pi i/d} - 1|\, |x_j| \geq \epsilon\, CR ,
 $$
for some universal $\epsilon>0$ depending only on $n,d$ (that is $X$ and $H$). 
Now by compactness of $X$, we have finitely many local uniformizers covering $X$, and hence a universal $\epsilon>0$
satisfying that $d_k(hx,x)\geq \epsilon\, CR$. By the exponential decay,
 $$
 |h^*\tilde{s}_{k,x}(y)| \leq C'e^{- d_k(y,hx)^2/5},
 $$
and on the ball $B(x,R)$ one has $d_k(y,hx)\geq (\epsilon C-1)R$, so that 
 $|h^*\tilde{s}_{k,x}| \leq C'e^{- (\epsilon C-1)^2R^2/5}$. For 
$C>0$ large enough, this is smaller than $\frac{1}{4|H|} e^{-R^2}$. Hence
 $$
 |s_{k,x}| \geq |\tilde{s}_{k,x}| - \sum_{h\neq 1} |h^*\tilde{s}_{k,x}| \geq \frac34 e^{-R^2}- \frac14 e^{-R^2}=\frac12 e^{-R^2}\, .
 $$
 
Property (2) is also clear, as $s_{k,x}$ is a finite combination of $\tilde{s}_{k,x}$ such that each one of them satisfies the desired inequalities.
\end{proof}

\begin{remark}\label{rmk:higher}
By a direct computation, the higher derivatives of $s_{k,x}$ will satisfy
$$
|\nabla^p s_{k,x}|<P(d_k(x,y))e^{-d_k(x,y)^2/5}
$$
where $P$ is a universal polynomial of degree $p$,  which does not depend on $k$ and $x$.
\end{remark}

By writing locally near a point $x\in X$ a given section $s$ of $L^{\otimes k}$ as $f\, s_{k,x}$ for some  function $f$ defined on a local orbifold chart with values in $\CC$, one can conveniently rephrase \Cref{skd-trans} in terms of the function $f$ as follows:
\begin{definition}
 A function $f:\C^n \to \C$ is \emph{$\s$-transverse to $w\in \C$ at a point
 $x\in \C^n$} if the inequality  $|f(x)-w|< \sigma$ implies
 $|df(x)|>\sigma$.
\end{definition}

We will use the following rescaled version of \cite[Theorem 20]{Do96}, which is simply deduced from the latter by considering ${f}(z)=\tilde f(\frac{z}{R})$.

\begin{theorem}[Rescaled {\cite[Theorem 20]{Do96}}]\label{thm:transverse}
For $\sigma>0$, let $\mathcal{H}_{\sigma,R}$ denote the functions $f$ on the closed polydisk
$\Delta^+(R):=\{z\in\CC^n\vert \, \vert z_i \vert \leq 11 R / 10\}$, such that 
\begin{enumerate}
    \item $|f|_{C^0(\Delta^+(R))}\le 1$.
    \item $|\overline{\partial} f|_{C^1(\Delta^+(R))} \le \frac{\sigma}{R^2}$.
\end{enumerate}
Then there is an integer depending only on the complex dimension $n$, such that for any $0<\delta<\frac{1}{2}$, if $\sigma < Q_p(\delta)\delta$, then for any $f\in \mathcal{H}_{\sigma,R}$, there is $w\in \CC$ with $|\omega|\le \delta$ such that $f$ is $Q_p(\delta)\delta/R$-transverse to $w$ over the interior $\Delta(R):= \{z\in\CC^n\vert \, \vert z_i \vert \leq  R\}$ of $\Delta^+(R)$. 
\end{theorem}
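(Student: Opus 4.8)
The plan is to reduce the statement verbatim to \cite[Theorem 20]{Do96} via the substitution $z\mapsto Rz$, which identifies the polydisks $\Delta^+(R)$ and $\Delta(R)$ with the unit ones $\Delta^+(1)$ and $\Delta(1)$ appearing in Donaldson's formulation, and to keep track of how the relevant quantities rescale. Concretely, given $f\in\mathcal H_{\sigma,R}$, set $\tilde f(z):=f(Rz)$ for $z\in\Delta^+(1)$ (we may and do assume $R\ge 1$, as is the case in all the applications of this statement). Then $|\tilde f|_{C^0(\Delta^+(1))}=|f|_{C^0(\Delta^+(R))}\le 1$, so the first hypothesis is preserved.

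For the $\overline\partial$-bound one uses the chain rule: $\overline\partial\tilde f(z)=R\,(\overline\partial f)(Rz)$ and $\nabla\overline\partial\tilde f(z)=R^2\,(\nabla\overline\partial f)(Rz)$, so that, since $R\ge 1$,
\[
|\overline\partial\tilde f|_{C^1(\Delta^+(1))}\ \le\ R^2\,|\overline\partial f|_{C^1(\Delta^+(R))}\ \le\ R^2\cdot\frac{\sigma}{R^2}\ =\ \sigma .
\]
Hence $\tilde f$ lies in the class $\mathcal H_{\sigma}:=\mathcal H_{\sigma,1}$ to which \cite[Theorem 20]{Do96} applies. Let $p=p(n)$ be the integer provided by that theorem. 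Then for any $0<\delta<\tfrac12$ with $\sigma<Q_p(\delta)\delta$, Donaldson's theorem yields $w\in\CC$ with $|w|\le\delta$ such that $\tilde f$ is $Q_p(\delta)\delta$-transverse to $w$ over the interior polydisk $\Delta(1)$.

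It remains to transport this transversality back to $f$. For $x\in\Delta(R)$ write $x=Rz$ with $z\in\Delta(1)$; then $f(x)=\tilde f(z)$ and $d\tilde f(z)=R\,df(x)$, so $|df(x)|=|d\tilde f(z)|/R$. If $|f(x)-w|<Q_p(\delta)\delta/R$, then a fortiori $|\tilde f(z)-w|<Q_p(\delta)\delta$ (using $R\ge 1$), hence $|d\tilde f(z)|>Q_p(\delta)\delta$ by the conclusion of the previous paragraph, and therefore $|df(x)|>Q_p(\delta)\delta/R$. This is exactly the assertion that $f$ is $Q_p(\delta)\delta/R$-transverse to $w$ over $\Delta(R)$, with $|w|\le\delta$. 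The only point requiring any attention---and it is entirely routine---is the bookkeeping of the scaling factors: a factor $R$ on first derivatives and $R^2$ on the $C^1$-norm of $\overline\partial f$ (which is precisely why the hypothesis is normalized with $\sigma/R^2$), together with the compensating factor $R$ that downgrades the transversality constant from $Q_p(\delta)\delta$ to $Q_p(\delta)\delta/R$. All the analytic substance is already contained in \cite[Theorem 20]{Do96}, so there is no genuine obstacle here.
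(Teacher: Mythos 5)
Your proposal is correct and is exactly the paper's argument: the authors state just before the theorem that it is "simply deduced from \cite[Theorem 20]{Do96} by considering $f(z)=\tilde f(z/R)$," which is the rescaling you carry out, and your bookkeeping of the factors $R$ (first derivatives, hence the transversality constant $Q_p(\delta)\delta/R$) and $R^2$ (the $C^1$-norm of $\overline\partial f$, hence the hypothesis $\sigma/R^2$) is accurate, with the harmless standing assumption $R\ge 1$ that indeed holds in all applications.
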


The approach we are now going to use to find asymptotically holomorphic sections with certain quantitative transversality is to apply the techniques in \cite{Do96} inductively on the strata starting from the \emph{top} stratum. 
For this, we first need the following result.

\begin{lemma} \label{lem:xalpha}
 Let $C_0>0$, $R>0$ and $p>0$. Define $Q_p(\eta)=(\log(\eta^{-1}))^{-p}$.  Starting with 
 $\eta_0>0$, define a sequence by $\eta_i=\eta_{i-1}Q_p(\eta_{i-1})/2R$. Then 
 there is some constant $C>0$ 
 depending on $C_0,R,p,\eta_0$ such that
 $Q_p(\eta_{C_0D^{m}})\geq C/D^{m p+1}$,
 for $D$ large enough.
\end{lemma}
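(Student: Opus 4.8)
The plan is to pass to logarithms, turning the multiplicative recursion into an additive one that can be solved directly. We may assume $\eta_0$ is small, say $\log(\eta_0^{-1})>\max\{1,(2R)^{-1/p}\}$; this holds in the application and is in any event implicit, since otherwise $Q_p(\eta_0)=(\log(\eta_0^{-1}))^{-p}$ need not make sense or the sequence may leave $(0,1)$. Under this assumption an immediate induction shows $\eta_i\in(0,1)$ for all $i$, that $\eta_i$ is strictly decreasing, and hence that
\[
u_i:=\log(\eta_i^{-1})
\]
is strictly increasing with $u_i\to+\infty$. Applying $-\log$ to the relation $\eta_i=\eta_{i-1}Q_p(\eta_{i-1})/2R$ and using $Q_p(\eta_{i-1})=u_{i-1}^{-p}$ gives the additive recursion
\[
u_i=u_{i-1}+p\log u_{i-1}+\log(2R).
\]

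The key observation is that the increment $u_i-u_{i-1}=p\log u_{i-1}+\log(2R)$ is only \emph{logarithmic} in $u_{i-1}$, so $u_i$ grows barely faster than linearly. Concretely, I would establish: \emph{for every $\alpha$ with $1<\alpha<1+\tfrac1{mp}$ there is a constant $C_\alpha>0$, depending only on $\alpha,p,R,\eta_0$, such that $u_i\le C_\alpha(i+1)^\alpha$ for all $i\ge 0$.} This is proved by induction on $i$. From the recursion and the inductive hypothesis $u_{i-1}\le C_\alpha i^\alpha$ one gets $u_i\le C_\alpha i^\alpha+p\log(C_\alpha i^\alpha)+\log(2R)=C_\alpha i^\alpha+p\alpha\log i+O(1)$; on the other hand, by convexity of $x\mapsto x^\alpha$, $C_\alpha\big((i+1)^\alpha-i^\alpha\big)\ge C_\alpha\alpha\, i^{\alpha-1}$, and since $\alpha-1>0$ the power $i^{\alpha-1}$ eventually dominates $p\alpha\log i+O(1)$. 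Hence the bound $u_i\le C_\alpha(i+1)^\alpha$ propagates for all $i$ past a threshold depending only on $\alpha$ and $p$, and the finitely many remaining base cases (including $i=0$) are absorbed by taking $C_\alpha$ large.

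With this in hand the conclusion is immediate: setting $N=\lceil C_0D^m\rceil$ (the rounding only affects constants) and fixing any $\alpha\in(1,1+\tfrac1{mp})$, we get for $D$ large
\[
u_N\le C_\alpha(C_0D^m+1)^\alpha\le C'D^{m\alpha},\qquad C':=C_\alpha(2C_0)^\alpha,
\]
and therefore
\[
Q_p(\eta_{C_0D^m})=u_N^{-p}\ge (C')^{-p}D^{-m\alpha p}\ge (C')^{-p}D^{-(mp+1)},
\]
where the last inequality uses $m\alpha p<mp+1$ (since $\alpha<1+\tfrac1{mp}$) together with $D\ge 1$. Thus the lemma holds with $C=(C')^{-p}$.

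The argument is elementary throughout; the one point that requires care — and where a crude estimate would fail — is that the polynomial bound on $u_N$ must have exponent strictly below $1+\tfrac1{mp}$ (a bound such as $u_N=O(N^2)$ is useless when $mp>1$). One could equally well prove the sharper $u_N=O(N\log N)$, which removes the need to tune $\alpha$; either way the input is just the bootstrap that, $u_i$ being at most polynomial in $i$, the increments $p\log u_{i-1}+\log(2R)$ are $O(\log i)$, so that summation yields near-linear growth.
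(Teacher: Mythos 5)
Your proof is correct and follows the same overall strategy as the paper's: pass to $u_i=\log(\eta_i^{-1})$, derive the additive recursion $u_i=u_{i-1}+p\log u_{i-1}+\log(2R)$, and establish near-linear growth of $u_i$. The difference is in the intermediate growth bound: the paper cites Donaldson's Lemma 24 to compare against $y_\alpha=q\alpha\log\alpha$ (with $q>p$), yielding $u_i=O(i\log i)$, whereas you derive the slightly weaker $u_i=O(i^\alpha)$ for any fixed $\alpha\in\bigl(1,1+\tfrac1{mp}\bigr)$ by a self-contained induction using convexity of $x\mapsto x^\alpha$. Your estimate is coarser but, as you observe, still gives $Q_p(\eta_N)\gtrsim D^{-m\alpha p}$ with $m\alpha p<mp+1$, which suffices; the gain is that the argument is elementary and avoids the external reference. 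One small imprecision to flag: you say the induction threshold depends ``only on $\alpha$ and $p$'', but it depends on $C_\alpha$ too, via the $p\log C_\alpha$ term. This is harmless — in the key inequality $p\alpha\log i+p\log C_\alpha+\log(2R)\le C_\alpha\alpha\,i^{\alpha-1}$ the left side grows only logarithmically in $C_\alpha$ while the right side grows linearly, so enlarging $C_\alpha$ to absorb the finitely many base cases simultaneously makes the inductive step hold for all $i\ge1$ — but the wording should acknowledge this.
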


\begin{proof}
 As in \cite[Lemma 24]{Do96}, define 
 $x_\alpha=-\log \eta_\alpha$. Therefore
  $$
  x_\alpha=x_{\alpha-1} + p\log x_{\alpha-1} + \log (2R).
  $$
 Take $q>p$ and introduce $y_\alpha=q\alpha\log \alpha$. Then
 the same proof as that 
 of \cite[Lemma 24]{Do96} shows
 that $y_\alpha-y_{\alpha-1} \geq p \log y_{\alpha+1}+\log (2R)$, for $\alpha$ large enough. So $x_\alpha \leq q(\alpha+\alpha_1)\log(\alpha+\alpha_1)$, for some $\alpha_1$, and hence 
  $$
  Q_p(\eta_{\alpha-1}) \geq \frac{C}{(\alpha\log\alpha)^p}\, .
  $$
This implies in turn that 
  $$
  Q_p(\eta_{\alpha}) \geq \frac{C}{D^{mp+1}},
  $$
for $\alpha\leq C_0\, D^{m}$.
\end{proof}

\begin{proposition}\label{prop:size}
For each stratum $\tau\in \CrS(X)$, there exist positive numbers $D_{\tau}, R_{\tau},C_\tau, \{\eta_{\tau,i} \}_{i\le C_\tau D_{\tau}^{m}}$ of the following significance for $k\gg 0$.
\begin{enumerate}
    \item For $R_\tau$, \Cref{prop:lattice} can be applied to $X_{\tau,k,C_\tau D_{\tau}}$.
    \item If $\theta<\tau$, then 
    $R_\theta>2C_\tau D_{\tau}$, where $C_\tau$ is a constant so that  \Cref{lemma:localsection} and 
    \Cref{prop:lattice} can
    be applied. 
    \item We have $\eta_{\tau,i}=Q_p(\eta_{\tau,i-1})\eta_{\tau, i-1}/2R_{\tau}$, for $i=1,\ldots, C_\tau D_\tau^m$. 
    \item $Q_p(\eta_{\tau,C_\tau D_{\tau}^m})>\exp(-D^2_{\tau})$. 
    \item If $\theta<\tau$, then $\eta_{\theta,1}<
    \frac{1}{2}\eta_{\tau,C_\tau D_{\tau}^m}$.
\end{enumerate}
\end{proposition}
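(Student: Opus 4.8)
The plan is to construct the data $(D_\tau, R_\tau, C_\tau, \{\eta_{\tau,i}\})$ by \emph{inverse induction on the height} $h(\tau)$ of the strata, starting from the top stratum $\tau_{\max}$ (where $h=0$) and proceeding to strata of larger height. At each stage we first fix $R_\tau$ and $C_\tau$ so that the geometric hypotheses — \Cref{lemma:localsection} and \Cref{prop:lattice} applied to $X_{\tau,k,C_\tau D_\tau}$ — are satisfied; note $C_\tau$ here can be taken to be the maximum of the two universal constants appearing in those two statements, and $R_\tau$ is then any value large enough that \Cref{prop:lattice} applies with that $R_\tau$ and $C_\tau$. This settles conditions (1) and the ``$C_\tau$'' part of (2). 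The point of doing the induction top-down is that conditions (2) and (5) refer to $\theta<\tau$, i.e.\ to strata of \emph{strictly smaller} height, hence already constructed; so we must make the choices for such $\theta$ \emph{before} we choose $R_\tau$ and the $\eta_{\tau,i}$.

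Concretely, I would run the induction as follows. Suppose all data for strata of height $<h$ have been fixed. Let $\tau$ have height $h$. The constants $R_\theta$ for all $\theta<\tau$ are already determined, so we may pick $D_\tau$ and $C_\tau$ large enough (in addition to the constraint from step (1)) that $R_\theta > 2 C_\tau D_\tau$ for every $\theta<\tau$ — there are finitely many such $\theta$ by \Cref{prop:po}, and actually it is cleaner to first fix $C_\tau$ from the geometric constraints and then see (2) and (4) as lower bounds on $R_\tau$ and constraints on $D_\tau$ respectively. Wait — (2) bounds $R_\theta$ from below in terms of $C_\tau D_\tau$, so it is a constraint relating the \emph{already chosen} $R_\theta$ to the \emph{to-be-chosen} $C_\tau, D_\tau$: we simply need $C_\tau D_\tau < \tfrac12 \min_{\theta<\tau} R_\theta$, which is an \emph{upper} bound, so it restricts how big $D_\tau$ can be, not how small. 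Meanwhile (4), via \Cref{lem:xalpha} applied with $C_0 = C_\tau$, $R = R_\tau$, $p$, and initial value $\eta_0 = \eta_{\tau,0}$ (to be chosen), gives $Q_p(\eta_{\tau, C_\tau D_\tau^m}) \geq C/D_\tau^{mp+1}$, and since $\exp(-D_\tau^2)$ decays faster than any inverse polynomial, the inequality $Q_p(\eta_{\tau,C_\tau D_\tau^m}) > \exp(-D_\tau^2)$ holds once $D_\tau$ is large enough. So (2) wants $D_\tau$ small and (4) wants $D_\tau$ large: these are compatible \emph{only if} $\min_{\theta<\tau}R_\theta$ is large enough relative to the threshold coming from (4). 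This is the crux, and it forces the ordering: when we process strata of smaller height we must already make their $R$'s large enough to leave room. The clean fix is to process in two sweeps, or equivalently: at height $h$, given a provisional requirement ``$R_\tau$ must exceed some threshold $T_\tau$'' and ``$D_\tau$ must exceed some threshold coming from (4) which itself depends on $R_\tau$'', we note that enlarging $R_\tau$ only relaxes the geometric constraints (1) and the $\theta$-constraint is about $R_\theta$ not $R_\tau$, so we may freely take $R_\tau$ as large as we wish — and then, having fixed $R_\tau$, \Cref{lem:xalpha} pins down how large $D_\tau$ must be for (4), and (2) is a constraint we pass \emph{down} as a requirement on the $R$'s of the next strata up in height. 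Since there are finitely many strata and the height is bounded, this terminates.

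After $R_\tau, C_\tau, D_\tau$ are fixed, condition (3) simply \emph{defines} the finite sequence $\eta_{\tau,i}$ recursively from a chosen initial value $\eta_{\tau,0}$ via $\eta_{\tau,i} = Q_p(\eta_{\tau,i-1})\eta_{\tau,i-1}/2R_\tau$; this is a decreasing sequence, and \Cref{lem:xalpha} controls its terminal value as above. Finally, condition (5) says $\eta_{\theta,1} < \tfrac12 \eta_{\tau, C_\tau D_\tau^m}$ for $\theta<\tau$: since $\theta$ has smaller height, it is processed after $\tau$... no — $\theta<\tau$ means $X_\theta\subset X_\tau$, so $\theta$ is a \emph{smaller}, hence \emph{higher}, stratum, $h(\theta)>h(\tau)$, processed \emph{later}. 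So when we reach $\theta$, all the numbers $\eta_{\tau,\bullet}$ for $\tau>\theta$ are already fixed, and we simply choose $\eta_{\theta,0}$ (and hence $\eta_{\theta,1} = Q_p(\eta_{\theta,0})\eta_{\theta,0}/2R_\theta$) small enough that (5) holds for all the finitely many $\tau>\theta$. This is always possible since $\eta_{\theta,1}\to 0$ as $\eta_{\theta,0}\to 0$.

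The main obstacle, and the place requiring genuine care rather than routine bookkeeping, is reconciling conditions (2) and (4): (4) forces $D_\tau$ to be large (to beat the exponential $\exp(-D_\tau^2)$ by the polynomial lower bound from \Cref{lem:xalpha}), while (2) caps $C_\tau D_\tau$ from above by the already-chosen $R_\theta$'s of lower strata. The resolution is precisely the top-down induction: one must commit, when handling a stratum of height $h$, to making its associated radius large enough to accommodate the $D$'s of all strata of height $h+1$ that lie below it — and this is consistent because the finiteness of $\CrS(X)$ and the boundedness of the height function let us choose a finite increasing sequence of thresholds, one per height level. I would present this either as a formal downward induction on $h(\tau)$ with an explicitly stated inductive hypothesis (``for all $\tau$ with $h(\tau)\le h$, data exist with $R_\tau$ larger than [threshold depending only on strata of height $\le h$]''), or, more transparently, by first choosing all the $D_\tau, C_\tau$ compatibly, then all the $R_\tau$ large enough, then the $\eta$-sequences last.
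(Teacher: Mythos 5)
Your eventual algorithm — increasing induction on the height $h(\tau)$, starting from $\tau_{\max}$ (height $0$), appealing to \Cref{lem:xalpha} plus the fact that $\exp(-D^2)$ beats any inverse polynomial in $D$ to pick $D_\tau$, and passing constraints (2) and (5) forward to the yet-to-be-processed strata — is exactly what the paper's proof does. But the exposition is muddled in a way worth flagging. You assert several times that $\theta<\tau$ are strata ``of strictly smaller height, hence already constructed,'' and that ``the constants $R_\theta$ for all $\theta<\tau$ are already determined.'' This is backwards: $\theta<\tau$ means $X_\theta\subset X_\tau$, so $\theta$ is a \emph{smaller} stratum of \emph{strictly larger} height, processed \emph{later}. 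You catch this near the end when handling condition (5), but the earlier paragraphs are wrong as written, and as a consequence the ``tension'' you identify between (2) and (4) — that (2) caps $C_\tau D_\tau$ from above by already-chosen $R_\theta$'s — is not real: $R_\theta$ has not yet been chosen when you process $\tau$, so (2) is simply a forward constraint imposed when $R_\theta$ is later picked (in the paper this surfaces as picking $R$ large enough that the radius-$\tfrac12 R$ neighbourhood of $\bigcup\tau_i$ covers the region missed by the previously placed lattices).

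The proposed ``more transparent'' alternative — ``first choosing all the $D_\tau, C_\tau$ compatibly, then all the $R_\tau$ large enough, then the $\eta$-sequences last'' — cannot work. The threshold for $D_\tau$ comes from \Cref{lem:xalpha}, whose constant $C$ and ``$D$ large enough'' clause depend on $R_\tau$ and $\eta_{\tau,0}$; in turn $R_\tau$ must exceed $2C_{\tau'}D_{\tau'}$ for the (earlier, lower-height) $\tau'>\tau$. So the dependence is genuinely interleaved across heights, and a single sweep in order of increasing height — determining $R_\tau$, then $\eta_{\tau,\bullet}$, then $D_\tau$ at each stage, exactly as in the paper — is what is needed; there is no rearrangement that chooses all the $D$'s up front.
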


\begin{proof}
This is proved by induction on the height
\eqref{eqn:height} on $\CrS(X)$. 
We start with $\tau_{0}=\tau_{\max}$, where we take $R_{\tau_{0}}=1$ and $\eta_{\tau_{0},0}=\frac{1}{2}$. 
Then there is a universal constant $C_{\tau_0}\gg 0$, such that \Cref{prop:lattice} can be applied to $X_{\tau_0,k, C_{\tau_0}D}$, 
with $D$ to be chosen large enough shortly.
We define $\eta_{\tau_0,i}$ as in
the statement, hence  \Cref{lem:xalpha}
gives the desired lower bound in (4).
Therefore there exists a large enough $D=D_{\tau_{0}}$, such that $Q_p(\eta_{\tau_{0},C_{\tau_0}D^m}) > \exp(-D_{\tau_0}^2)$. 

Now assume the claim holds for all strata with height $\le \ell$. 
Let $\tau_1,\ldots,\tau_s$ be the
strata with height $\ell+1$. 
First note that by definition, we cannot have $\tau_i<\tau_j$ for some $1\leq i,j\leq s$. Take $R$ so
that the neighborhood of $\bigcup \tau_i$ of $g_k$-radius $\frac12 R$ covers the complement of the domains where we applied \Cref{prop:lattice} in the previous steps, whose $g_k-$radius is given by the maximum of the $C_\vartheta D_{\vartheta}$'s
for all $\vartheta$ that contains any of the $\tau_1,\ldots,\tau_s$. Set $R_{\tau_i}=R$ for all $i$. Then we can apply \Cref{prop:lattice} to $X_{\tau_i,k,C_{\tau_i}D_{\tau_i}}$ with this $R$, since this would guarantee the balls of $g_k$-radius $R_{\tau_i}$ at the found lattice would cover the neighborhood of $\bigcup \tau_i$ of $g_k$-radius  $\frac{1}{2}R_{\tau_i}$ (minus the neighborhood of deeper strata), i.e.\ the domain that has been missed in the previous steps. 

Now we take    
 $$
 \eta_{\tau_i,1}:=\min\left\{\frac{1}{2}\eta_{\vartheta,C_\vartheta D_{\vartheta}^m}\, |\, \vartheta>\tau_i\right\},
 $$
which is well-defined by induction hypothesis. 
Then \Cref{lem:xalpha} guarantees that
there exists $D=D_{\tau_1}=\ldots = D_{\tau_s}$ and $C=C_{\tau_1}=\ldots = C_{\tau_s}$, such that $Q_p(\eta_{\tau_i,C_{\tau_i}D_{\tau_i}^{m}})
>\exp(-D_{\tau_i}^2)$. Thus we get the case 
$h=\ell+1$, and the claim follows by induction.
\end{proof}

\begin{remark}
$D_{\tau}$ will grow fairly fast with respect to $h(\tau)$, i.e.\ we have $D_\theta\gg R_\theta\gg D_\tau \gg R_\tau$ for $\theta<\tau$. 
Therefore the lattice we find on $X$ will be much more refined on a higher stratum compared to the lattice on a lower stratum. 
However, we obtain a larger transversality region for lattice points from a lower stratum with much smaller amount of transversality. 
\end{remark}

\begin{remark}
Another natural order of induction is from the bottom stratum, as those minimal elements $\xi\in \CrS(X)$ are necessarily smooth manifolds after passing to the reduced version $\xi_{\mathrm{R}}$. Then we can get some transversality in a neighborhood of $X_{\xi}$. However, to make sure the achieved transversality is not destroyed when we work on a higher stratum, we are forced to use a larger $D$ on the higher stratum. Then \Cref{prop:lattice} only guarantees such lattice outside a neighborhood of $X_{\xi}$, which may exceed the region where we have transversality from the induction assumption. In other words, we have another numerical question on existence of $\eta,D,R$ for each stratum with different restrictions. However, in this case, the conditions are working against each other making the existence unclear.  
\end{remark}

We are now ready to give a proof of \Cref{thm:existence_intro} on the existence of asymptotically holomorphic sequences of sections quantitatively transverse to the zero section.

\begin{proof}[Proof of \Cref{thm:existence_intro}]
 By \Cref{prop:lattice} and \Cref{prop:size}, for each singular stratum $\tau$ and $k\gg 0$, we can find a lattice $\Lambda_{\tau}$ on the complement of a neighborhood of the singular locus in $\tau$, such that the balls of $g_k$-radius $R_{\tau}$ around $\Lambda_{\tau}$ cover $X$. We start with any asymptotically holomorphic section $s_k$. 
We use the sections $s_{k,x}$ of \Cref{lemma:localsection}. For the main stratum $\tau_0=\tau_{\max}$, we perturb 
 $$
  s_k':=s_k+\sum_{x\in \Lambda_{\tau}}w_{x}s_{k,x}\, ,
  $$
applying the same argument of \cite[Proposition 23]{Do96} for $k\gg 0$.
That is, there are $w_x$ for $x\in \Lambda_{\tau_0}$ such that 
$s'_k$ is 
$\eta_{\tau_{0},C_{\tau_0} D_{\tau_{0}}^{m}}$-transverse 
to $0$ on the balls of $g_k$-radius $R_{\tau_{0}}=1$ around $\Lambda_{\tau_0}$.

Next we proceed similarly by induction on the height, i.e.\ perturbing the coefficients for lower strata. 
The numbers from \Cref{prop:size} ensure that each perturbation does not destroy the transversality we obtained from the previous step on the higher strata, so that the argument from \cite[Proposition 23]{Do96} can be applied stratum by stratum. 
As a consequence, we get a universal constant $\eta$, such that $s_k$ is 
$\eta$-transverse for $k\gg 0$.
\end{proof}

\begin{remark}\label{rmk:higher_derivatives}
By \Cref{rmk:higher,rmk:even_distribution}, for $N>0$ there are $C_N>0$ so that $|\nabla^m s_k|<C_N$ for any $m\le N$ and all $k$.
\end{remark}

In fact, the proof above also shows the following:

\begin{corollary}
For $k\gg 0$, we can assume that the $\eta$-transverse asymptotically holomorphic sections $s_k$ pullback to $\eta$-transverse asymptotically holomorphic sections on $\tau$, 
for any $\tau\in \CrS(X)$. As a special case, if there is a point $x\in X$ such that the $x$ is the only fixed point of the isotropy group of $x$ in a local chart, then we can assure that $x\notin s_k^{-1}(0)$.
\end{corollary}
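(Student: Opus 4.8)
The plan is to revisit the inductive construction of $s_k$ in the proof of \Cref{thm:existence_intro} and arrange, stratum by stratum, that the pullback to every $\tau\in\CrS(X)$ is quantitatively transverse as well. Fix $\tau$ and let $\iota_\tau\colon\tau\to X$ be the natural immersion. Near a point $x\in X_\tau$ the orbifold $\tau$ is locally modeled, by \Cref{prop:C_H_orbifold} and \Cref{DTh}, on $N_{H_x}(H_\tau)\ltimes\CC^l$ with $\CC^l=\CC^n_{H_\tau}$ a complex subspace of the local uniformizer $H_x\ltimes\CC^n$ of $X$; this subspace is $J$-invariant, because $J$ commutes with the $H_\tau$-action, so $\iota_\tau^*\omega$, $\iota_\tau^*J$, $\iota_\tau^*L$ and $\iota_\tau^*\nabla$ make $\tau$ an almost-Kähler orbifold carrying a prequantizing line bundle whose $k$-th power is $\iota_\tau^*L^{\otimes k}$, and off the deeper strata $\iota_\tau$ is an embedding with $g_k$-metrics comparable to those on $X$ by \eqref{le_3} of \Cref{prop:le_property}. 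Since $\iota_\tau$ is holomorphic, $\bar\partial$ and $\nabla$ commute with $\iota_\tau^*$ and all the relevant norms can only decrease under restriction; hence $\iota_\tau^*s_k$ is asymptotically holomorphic whenever $s_k$ is, and the only point that needs work is transversality.

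The key local observation I would use is that the equivariant peak sections $s_{k,x}$ of \Cref{lemma:localsection} centered at $x\in X_\tau$ restrict to peak sections on $\CC^l$ up to an exponentially small error: the estimate $d_k(hx,x)\ge\epsilon\,CR$ appearing in the proof of \Cref{lemma:localsection}, applied now to points of $\CC^l$ near $x$, shows that for $h\notin H_\tau$ the summand $h^*\tilde s_{k,x}$ is supported away from $\CC^l$ near $x$, so that $\iota_\tau^*s_{k,x}$ agrees with the single Gaussian peak section $\tilde s_{k,x}\vert_{\CC^l}$ on $\tau$ up to a term of size $O(e^{-(\epsilon C-1)^2R^2/5})$. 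At the stratum-$\tau$ step of the induction in the proof of \Cref{thm:existence_intro}, where one perturbs $s_k$ by $\sum_{x\in\Lambda_\tau}w_x s_{k,x}$, I would then pick the coefficients $w_x$ by a two-stage application of \Cref{thm:transverse} at each lattice point $x$: first to the restriction to $\CC^l$ of the local model $f$ of $s_k/s_{k,x}$, gaining for $\iota_\tau^*s_k$ on $\tau$ near $x$ the transversality amount prescribed by the numbers of \Cref{prop:size}; then to $f$ itself on the polydisk $\Delta^+(R_\tau)\subset\CC^n$, with a target transversality on $X$ chosen small enough that the resulting scalar correction does not destroy the transversality just obtained on $\CC^l$. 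The $D$-separation of $\Lambda_\tau$ ensures, exactly as in \cite[Proposition 23]{Do96}, that the local perturbations do not interfere, so after running over the $O(D_\tau^m)$ families $\Gamma_\alpha$ one has $\eta$-transversality of $s_k$ on $X$ and of $\iota_\tau^*s_k$ on $\tau$ near all of $\Lambda_\tau$. Since the balls of $g_k$-radius $R_\tau$ around $\Lambda_\tau$ cover $X_{\tau,k,C_\tau D_\tau}$, and every stratum of $\tau$ lies over a stratum $\le\tau$ of $X$ via a branched cover along which the peak sections used there pull back to equivariant peak sections, iterating down the poset $\CrS(X)$ exhausts all of $\tau$. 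Performing this for all strata simultaneously gives the first assertion, for some uniform $\eta>0$.

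The special case is then immediate: if $x\in X$ is the only fixed point of its isotropy group $H_x$ in a local uniformizer, then $\CC^n_{H_x}=\{0\}$, so the stratum $\tau$ with $H_\tau=H_x$ through $x$ is $0$-dimensional near $x$; $\eta$-transversality of $\iota_\tau^*s_k$ on $\tau$ at $x$ then forces $\vert s_k(x)\vert\ge\eta$, since $T_x\tau=0$ leaves no direction along which the covariant derivative of $\iota_\tau^*s_k$ could be large, and in particular $x\notin s_k^{-1}(0)$. The main obstacle is precisely the two-stage use of \Cref{thm:transverse}: one must verify that the numerical bookkeeping of \Cref{prop:size} still closes up when transversality is spent twice at each stratum — once on $\tau$, once on $X$ — which is why the construction relies on the generous gaps $D_\theta\gg R_\theta\gg D_\tau\gg R_\tau$ between successive scales; since only finitely many strata are involved, enlarging the constants of \Cref{prop:size} absorbs the extra loss.
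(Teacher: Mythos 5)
Your proposal works very hard for something that is actually automatic. The key observation you miss — and the reason the paper simply says ``the proof above also shows'' without further argument — is that at any point $z$ of $X_\tau$ the covariant derivative $\nabla s_k(z)$ of \emph{any} orbifold section of $L^{\otimes k}$ is forced by equivariance to vanish in all directions normal to $\tau$. Indeed, in a local uniformizer $H_z\ltimes\CC^n$ the tangent space splits $H_\tau$-invariantly as $T_z\tau\oplus(T_z\tau)^\perp$ with $H_\tau$ acting trivially on $T_z\tau$ and with no nonzero fixed vector on $(T_z\tau)^\perp$; since the fibre action $\rho_i$ on $L^{\otimes k}$ is trivial (this is the definition of a complex line bundle over an orbifold in \Cref{def:orbibundle}), the covector $\nabla\tilde s_k(z)$ must be $H_\tau$-invariant and hence must annihilate $(T_z\tau)^\perp$. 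Therefore $|\nabla s_k(z)|=|\nabla(\iota_\tau^*s_k)(z)|$, and $\eta$-transversality of $s_k$ on $X$ at points of $X_\tau$ is \emph{literally the same condition} as $\eta$-transversality of $\iota_\tau^*s_k$ on $\tau$ — no modification of the inductive construction, no ``two-stage'' use of \Cref{thm:transverse}, and no extra numerics in \Cref{prop:size} are needed. Your treatment of asymptotic holomorphicity of the restriction and of the $0$-dimensional special case is fine, and indeed the latter is an immediate instance of the vanishing above: if $T_x\tau=0$ then $\nabla s_k(x)=0$ entirely, so $\eta$-transversality forces $|s_k(x)|\geq\eta$.

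Beyond being unnecessary, the two-stage perturbation you sketch has a genuine gap that you yourself flag but do not close: after applying \Cref{thm:transverse} on $\CC^l$ to get some transversality $\sigma$ for the restriction, the second application on $\CC^n$ produces a shift $w'$ with $|w'|\leq\delta$ whose size you do not control from below — you would need $\delta$ small relative to $\sigma$ \emph{and} $Q_p(\delta)\delta/R$ large enough to feed the ambient bookkeeping of \Cref{prop:size}, and you would need to rerun the entire inductive scheme of \Cref{prop:size} with these additional constraints. Saying that ``enlarging the constants absorbs the extra loss'' does not verify that the recursion still closes; it is precisely the sort of delicate numerical point that \Cref{prop:size} was engineered to handle once, and doubling it is not free. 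Fortunately, the equivariance observation makes all of this moot.
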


\subsection*{The quotient case}

Let $(M,\omega)$ be a smooth symplectic manifold and
$G$ a finite group of symplectomorphisms. Then the global quotient $X=M/G$ is a symplectic orbifold in a natural
way, with an induced orbifold symplectic form $\omega_X$
such that $\omega=\pi^*\omega_X$, where $\pi:M\to X$ is
the natural projection. If we take an orbifold  compatible almost complex $J_X$ on $(X,\omega_X)$,
then $J=\pi^*J_X$ is an almost complex structure on $M$ compatible with $\omega$.
In particular, $G$ acts by isometries for the metric associated to $\omega$ and $J$. 

Suppose now that $[\omega/2\pi]\in H^2(M,\ZZ)$. Then
as $H^2(X,\QQ)=H^2(M,\QQ)^G$, we have that
$[\omega_X/2\pi]\in H^2(X,\QQ)$. After taking a positive
integer multiple, we can assume that 
$[\omega_X/2\pi]\in H^2(X,\ZZ)$. We will assume this is the case, keeping the same notations. Then there is a 
complex line bundle $L_X \to X$ with $c_1(L_X)=[\omega_X/2\pi]$. The pull-back $L=\pi^*L_X \to M$ is a complex line bundle with $c_1(L)=[\omega/2\pi]$.

Take an orbifold connection $\nabla_X$ on $L_X$, with 
curvature $F_{\nabla_X}=-i\omega_X$. This produces operators $\bd_X$ and $\bbd_X$. The pull-back connection $\nabla$ on $L$ has $F_\nabla=-i\omega$, and pull-back operators
$\bd$ and $\bbd$ are $G$-invariant. Hence we have naturally a correspondence 
 $$
 C^\infty_{\orb}(X, L_X^{\ox k})
 \cong C^\infty(M,L^{\ox k})^G\, ,
 $$
and the orbifold asymptotically $J_X$-holomorphic sections on $X$
correspond to $G$-invariant asymptotically 
$J$-holomorphic sections on $M$.

Theorem \ref{thm:existence_intro} readily produces the following corollary:

\begin{corollary}
\label{cor:existence_G_invariant}
For $k\gg 0$, there exists an asymptotically holomorphic sequence of sections $s_k$ of $L^{\otimes k}$ on $M$ that is $\eta$-transverse to $0$, for some $\eta>0$ independent of $k$, and it is invariant by the action of $G$.
In particular, $s_k^{-1}(0)$ is a symplectic submanifold invariant by $G$.
\end{corollary}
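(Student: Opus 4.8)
The plan is to deduce the statement directly from \Cref{thm:existence_intro} applied to the symplectic orbifold $X=M/G$, using the pull-back correspondence recalled just above the statement. Concretely, \Cref{thm:existence_intro} produces, for $k\gg 0$, an asymptotically $J_X$-holomorphic sequence of orbifold sections $s_k^X$ of $L_X^{\otimes k}\to X$ which is $\eta$-transverse to $0$ for some $\eta>0$ independent of $k$. Under the isomorphism $C^\infty_{\orb}(X,L_X^{\otimes k})\cong C^\infty(M,L^{\otimes k})^G$, the sequence $s_k^X$ corresponds to a $G$-invariant sequence $s_k:=\pi^*s_k^X$ of sections of $L^{\otimes k}\to M$. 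It then remains to check that $s_k$ inherits asymptotic $J$-holomorphicity and $\eta$-transversality, and that $s_k^{-1}(0)$ is a $G$-invariant symplectic submanifold.

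For this, the key observation is that for a global quotient a local uniformizer of $X$ around a point $[m]$ is literally a $G_m$-invariant open neighborhood $\widetilde U_m\subset M$ of $m$, with $\pi|_{\widetilde U_m}$ the quotient map $\widetilde U_m\to\widetilde U_m/G_m$; and since $g=\pi^*g_X$, $J=\pi^*J_X$, the map $\pi$ identifies, pointwise on $\widetilde U_m$, the $g_k$-norms of $s_k,\nabla s_k,\bar\partial s_k,\nabla\bar\partial s_k$ with those of $s_k^X$ and its derivatives in the uniformizer. Hence the bounds of \Cref{asympt_hol_seq} transfer verbatim, so $s_k$ is asymptotically $J$-holomorphic. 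Likewise, if $m\in M$ satisfies $|s_k(m)|<\eta$, then $|s_k^X([m])|<\eta$ when read in the uniformizer, whence $|\nabla s_k^X|>\eta$ there by $\eta$-transversality of $s_k^X$, and therefore $|\nabla s_k(m)|>\eta$; so $s_k$ is $\eta$-transverse to $0$.

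Finally, $s_k^{-1}(0)=\pi^{-1}\big((s_k^X)^{-1}(0)\big)$ is manifestly $G$-invariant, and it is a smooth symplectic submanifold of $M$ by the argument of \Cref{prop:sympl_suborb_from_sequence}: in a Darboux uniformizer $\widetilde U_m\subset M$ the section $s_k$ restricts to a map $\widetilde s_k\colon\widetilde U_m\to\CC$ with $|\partial\widetilde s_k|>|\bar\partial\widetilde s_k|$ along its zero set for $k$ large, so $d\widetilde s_k$ is surjective there, $Z(\widetilde s_k)$ is a submanifold with symplectic tangent spaces inside $(T\widetilde U_m,k\omega)$, and these local pieces glue to the global submanifold $s_k^{-1}(0)$, on which $\omega$ restricts to a symplectic form. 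I do not expect a genuine obstacle here — consistent with the corollary being an immediate consequence of \Cref{thm:existence_intro} — the only point requiring a moment's care being the identification of pointwise norms under $\pi$, which is forced by $g=\pi^*g_X$, $J=\pi^*J_X$ and the description of the uniformizers of $M/G$ as $G$-invariant opens of $M$.
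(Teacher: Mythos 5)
Your argument is correct and is essentially the paper's (implicit) proof: the paper sets up the correspondence $C^\infty_{\orb}(X,L_X^{\otimes k})\cong C^\infty(M,L^{\otimes k})^G$ and the pull-back of $J$, $g$, $\nabla$ in the paragraph preceding the corollary, and then simply asserts that \Cref{thm:existence_intro} ``readily produces'' the result; what you have written out — pulling back the orbifold section, transferring the norm bounds pointwise via $g=\pi^*g_X$, $J=\pi^*J_X$, and noting that the $G$-invariant zero set is a smooth symplectic submanifold of $M$ by the usual Donaldson argument — is exactly the content being abbreviated.
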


\section{Lefschetz hyperplane theorem for symplectic orbifolds} \label{sec:lefschetz}

Consider a compact almost complex orbifold $(X,J,\omega)$ with integer symplectic form
and let $Z_k=Z(s_k)\subset X$ be asymptotically holomorphic suborbifolds, constructed as
zero sets of asymptotically holomorphic sections $s_k$ of $L^{\ox k}$, for $k\gg 0$, where the complex line bundle
$L\to M$ has $c_1(L)=[\omega/2\pi]$.
Then the topology of $X$ determines to large extent the topology of $Z_k$. This is given
by an extension of the Lefschetz hyperplane theorem for the situation at hand.

First, we say that a smooth orbifold function $f:X\to \RR$ is \emph{Morse} if the critical points
are isolated and non-degenerate \cite{Hep}. We recall more precisely the relevant notions. Let $X$
be a $m$-dimensional orbifold. If $x\in X$ and
$(U,\tilde U,H,\varphi)$ is an orbifold chart around $x$, and $\tilde f:\tilde U\to \RR$ is a representative
of $f$, which is $H$-equivariant. 
Then $x$ is a \emph{critical point} if $d\tilde f(x)=0$. Note that if
the action of $H <\OO(m)$ is irreducible, or generally if $(T_x\tilde U)^H =0$, this implies automatically
that $d\tilde f(x)=0$. At a critical point, there is a well-defined notion of Hessian, given as
 $$
  H_f(x)= \left( \frac{\bd \tilde f}{\bd x_i\bd x_j}(x)\right).
  $$
The critical point is \emph{non-degenerate} if $H_f(x)$ is a non-degenerate symmetric bilinear form. Note that always $(\Sym^2 T_x\tilde U)^H \neq 0$ (at least it contains the scalar product),
  therefore a critical point can always be non-degenerate.
 Moreover, if a critical point is non-degenerate, then it is isolated in the critical set. Therefore a 
 Morse function has finitely many critical points (for a compact orbifold).

Let now $s_k$ be an asymptotically holomorphic sequence of sections. Assume $s_k$ is $\eta$-transverse to zero. By \Cref{rmk:higher_derivatives}, the $C^2$-norm of $s_k$ is universally bounded. As a consequence, there exists universal positive constants $c,C_-,C_+$, such that  $C_-d_k(x,Z_k)\le |s_k(x)|\le C_+ d_k(x,Z_k)$ for $x$ in a $g_k$-neighbourhood $B_{g_k}(Z_k,c)$ of $g_k$-radius $c>0$. We may assume that $c$ is small enough, such that $c\, C_+<\eta$. 
As a consequence, we have $|s_k(x)|<\eta$ on $B_{g_k}(Z_k,c)$. Then $\eta$-transversality implies that $|\nabla s_k|\ge \eta$ on  $B_{g_k}(Z_k,c)$. 

We now consider the following function, defined on $X\setminus Z_k$,
  $$
  f_k=\log |s_k|^2 .
  $$
Note that 
 $$
 df_k = \frac{2}{|s_k|^2} \la s_k,\nabla s_k \ra.
 $$
 Hence the critical points of $f_k$ are the critical points of $s_k$,  and they are in $U_k:=X\backslash B_{g_k}(Z_k,c)$.

\begin{proposition} \label{prop:ddd}
 Let $(X,J,\omega)$ be as above. Take a sequence of asymptotically holomorphic
 sections $s_k$ of $L^{\ox k}$ which is $\eta$-transverse to zero. Then 
 there exists another sequence of asymptotically holomorphic sections $s_k'$, $c\,\eta$-transverse for some $0<c<1$, such that
  $Z(s_k')=Z(s_k)=Z_k$ and  $f_k'=\log |s_k'|^2$ is orbifold Morse on $X \setminus Z_k$.
 \end{proposition}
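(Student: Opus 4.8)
The plan is to follow the strategy of the corresponding statement in \cite{Do96} (see also \cite[Section 2]{Au97}), which is a finite-dimensional perturbation argument combined with the Sard-type transversality theorem for approximately holomorphic jets, now carried out equivariantly in local orbifold charts. First I would recall that the critical points of $f_k$ coincide with the critical points of $s_k$ (i.e.\ the points where $\nabla s_k$, suitably interpreted as a section of $T^*X\otimes L^{\otimes k}$, vanishes transversally to $Z_k$), and that by the discussion preceding the statement these all lie in the compact region $U_k=X\setminus B_{g_k}(Z_k,c)$, where $|s_k|\ge$ const. So the whole issue is to perturb $s_k$ inside a neighbourhood of $U_k$, keeping $Z(s_k')=Z_k$ and keeping asymptotic holomorphicity and $c\eta$-transversality, so that the resulting $\nabla' f_k'$ has only non-degenerate zeros, i.e.\ $s_k'$ is ``$2$-jet transverse'' in the appropriate quantitative sense.

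Second, I would set up the local model. Around each point $x\in U_k$ pick a local orbifold chart $H_x\ltimes B$ with the refined Darboux coordinates of \Cref{lem:trivializa}, and write $s_k=f\,\sref$ where $\sref=s_{k,x}$ is the local reference section of \Cref{lemma:localsection} (which is nowhere zero on a ball of fixed $g_k$-radius since $x\notin B_{g_k}(Z_k,c)$). The function $f$ is then asymptotically holomorphic with uniformly bounded $C^3$-norm (using \Cref{rmk:higher} and \Cref{rmk:higher_derivatives}), and $f$ is $H_x$-equivariant for the relevant character. The condition that $f_k'=\log|s_k'|^2$ be Morse at a point is, up to uniformly bounded factors, the condition that the $1$-jet of $f$ be transverse to $0$ with a quantitative lower bound on the determinant of the Hessian; crucially, as noted in the paragraph before the statement, $(\Sym^2 T_x\widetilde U)^H\ne 0$ always contains the identity quadratic form, so there is no obstruction to a non-degenerate critical point even at singular points --- the equivariant perturbations $w\cdot z + (\text{quadratic equivariant terms})$ suffice to move the $1$-jet of $\nabla f$ off the discriminant. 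Then one applies the main local transversality result of \cite[Section 2]{Au97} (the ``Theorem 20''-type statement applied to the $1$-jet, or equivalently \cite[Theorem 5]{Au97}) to $\partial f$, equivariantly: one perturbs $f$ by $f\mapsto f + w_0 + \langle w_1, z\rangle$ (or by the equivariant average of such a perturbation, exactly as in the proof of \Cref{lemma:localsection}), with $|w_0|,|w_1|$ small, to achieve a quantitative lower bound on $|\partial f| + |\text{Hess}\,f|$ of size $\gtrsim Q_p(\delta)\delta$ over a fixed-size ball.

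Third, I would globalize via the lattice/countable-perturbation scheme. Using \Cref{prop:lattice} (applied for the purpose at hand to cover $U_k$ --- here one does \emph{not} need the stratum-by-stratum refinement, only a single lattice covering $U_k$ with the usual covering, even-distribution and $D$-separation properties, since $U_k$ is uniformly away from $\sing(X)$ and the $f_k$ issue is only there) one gets a lattice $\{x_i\}$ with an $O(D^m)$-colouring. One then runs the inductive perturbation over the colour classes as in \cite[Proposition 23]{Do96}: at each colour class simultaneously add $\sum_i w_{x_i}(\text{affine term})\,s_{k,x_i}$ with the local $w_{x_i}$ supplied by the equivariant local transversality step, using the exponential decay in \Cref{lemma:localsection}(2) and \Cref{rmk:higher} to control the interference between different lattice points and between successive colour classes, and using \Cref{lem:xalpha} to see that after $O(D^m)$ steps the accumulated transversality constant is still bounded below independently of $k$. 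The perturbations are supported in $B_{g_k}(U_k, 1) \subset X\setminus Z_k$ (shrinking $c$ if necessary), so $Z(s_k')=Z_k$ is preserved, and the $C^0$-size of the total perturbation is $O(\delta)$, so $|s_k'|$ stays $\ge$ const on $U_k$ and $f_k'=\log|s_k'|^2$ is a well-defined smooth orbifold function there with, now, quantitatively non-degenerate critical points --- hence Morse on $X\setminus Z_k$. I also keep $c\eta$-transversality of $s_k'$ to $0$ for some $0<c<1$ by the same $C^0$-smallness.

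The main obstacle I expect is the bookkeeping in the equivariant local step: one must check that, at a singular point $x$ with isotropy $H_x$ acting nontrivially on the normal directions, the \emph{equivariant} affine (and if needed quadratic) perturbations of $f$ still span enough directions in the relevant jet space to reach the quantitative $1$-jet transversality for $\nabla f$ --- i.e.\ that the obstruction encountered for plain sections (equivariance forcing vanishing on fixed loci) does \emph{not} recur here because we are only perturbing in $U_k$, away from $Z_k$, where $\sref$ is nonvanishing and the local character of $L^{\otimes k}$ can be trivialized $H_x$-equivariantly on the nonvanishing locus after multiplying by $\bar\sref/|\sref|$. Granting that --- which is exactly the phenomenon already exploited in \Cref{lemma:localsection} and its proof --- the rest is the standard Donaldson machinery applied verbatim, and I would state it as such rather than reproduce \cite[Proposition 23]{Do96} and \cite[Section 2]{Au97} in detail.
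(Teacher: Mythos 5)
Your proposal and the paper's proof take genuinely different routes. The paper's argument is far softer and shorter: it simply invokes Hepworth's density result (\cite[Theorem~6.10]{Hep}) to perturb the real‑valued orbifold function $f_k$ itself, compactly inside $U_k$, to an orbifold Morse function $f_k'$ with $|\nabla^p(f_k'-f_k)|\le\epsilon k^{-1/2}$ for $p\le 2$. The key trick you are missing is how to turn that perturbation of a \emph{function} back into a perturbation of the \emph{section}: one sets $s_k' := e^{(f_k'-f_k)/2}\,s_k$, extended as $s_k$ on $B_{g_k}(Z_k,c)$. This multiplicative tweak manifestly preserves $Z(s_k')=Z_k$ and satisfies $\log|s_k'|^2=f_k'$ by construction, and the only thing left to check is that $|\nabla^p(e^{(f_k'-f_k)/2}-1)|=O(\epsilon k^{-1/2})$, which immediately gives asymptotic holomorphicity and $c\eta$-transversality of $s_k'$.

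Your approach, by contrast, attempts to run the full Donaldson/Auroux quantitative jet‑transversality machinery equivariantly (local $1$-jet transversality for $\partial f$, lattices, colourings, the $Q_p(\delta)$ bookkeeping). This is what one would do if a \emph{uniform} lower bound on the Hessian at critical points were needed, but the proposition only requires Morse‑ness qualitatively, so none of that is necessary. Moreover, your proposal leaves its hardest step --- that equivariant affine perturbations of $f$ span enough of the relevant jet space to achieve $1$-jet transversality of the \emph{real} function $f_k$ (not of the complex section $s_k$, which is a different condition) at singular points --- as a stated ``obstacle I expect,'' dismissed with ``granting that.'' That step is not obvious and would need an argument along the lines of \Cref{prop:opendense} to identify the fixed‑locus contributions; in fact this is precisely the delicate orbifold phenomenon that makes the paper's reliance on Hepworth's black‑box density theorem so attractive. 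So while your strategy could in principle be pushed through and would yield a quantitatively stronger conclusion, as written it has a genuine gap exactly at the equivariant local transversality step, and it is vastly more work than the paper's one‑paragraph argument.
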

 
 \begin{proof}
It is proved in \cite[Theorem 6.10]{Hep} that Morse functions are dense in the space of smooth orbifold functions defined on $X$ with its $C^\infty$-topology. 
We can perturb $f_k$ to a nearby $f_k'$ with a 
perturbation compactly supported in $U_k$ so that $f_k'$ is Morse and $|\nabla^p (f'_k-f_k)|\leq \epsilon k^{-1/2}$ for $p\le 2$.
Now consider 
 $$
 s_k':=e^{(f_k'-f_k)/2}  s_k\, ,
 $$
extended as $s_k$ on $B_{g_k}(Z_k,c)$.
This gives a well-defined section of $L^{\ox k}$, still denoted by $s_k'$, satisfying
$f_k'=\log |s_k'|^2$.

The first claim is that $s_k'$ is asymptotically holomorphic. On $B_{g_k}(Z_k,c)$, this is obvious.
On $U_k$, we will check that $|\nabla^p (s_k'-s_k)| \leq C_0\epsilon k^{-1/2}$,
for some universal $C_0>0$ and $p\le 2$, from which asymptotically holomorphicity then follows.
By \Cref{rmk:higher_derivatives}, the former in turns follows from the following
bounds
 $$
  |\nabla^p (e^{(f_k'-f_k)/2}-1)|<C\epsilon k^{-1/2}.
 $$
For $p=0$, the facts that $e^x -1 \leq 2x$ for $x$ small and $|f_k'-f_k|\leq \epsilon k^{-1/2}$
give $|e^{(f_k'-f_k)/2}-1| \leq 2\epsilon k^{-1/2}$. For $1\le p \le 2$, we use
that $|\nabla^p (f'_k-f_k)|\leq \epsilon k^{-1/2}$ to get 
$ |\nabla^p (e^{(f_k'-f_k)/2}-1)|  \leq C\epsilon k^{-1/2}$, where $C>0$ is universal.

Lastly, $Z(s_k')=Z(s_k)$ and $s_k'$ is $c\,\eta$-transverse for some $0<c<1$, thus concluding.
\end{proof}

As explained before \Cref{prop:ddd}, we have $|\nabla s_k|\ge \eta $ on $B_{g_k}(Z_k,c)$. As a consequence, we have that $B_{g_k}(Z_k,c)$ is diffeomorphic to a tubular neighborhood of $Z_k$ in $L^{\otimes k}|_{Z_k}$. To see this, for $c$ sufficiently small and $x\in  B_{g_k}(Z_k,c)$, one can parallel transport $s_k(x)$ to $L^{\otimes k}|_{Z_k}$ over the unique length minimizing geodesic from $x$ to $Z_k$. Then the lower bounds $|\nabla s_k|\ge \eta $ on $B_{g_k}(Z,c)$ implies that such map is a diffeomorphism onto the image for some small enough $c$. In particular, $Z_k$ is a deformation retract of $B_{g_k}(Z,c)$.

We are now in place to prove \Cref{thm:homology_lefschetz_hyperplane_intro} on the relationship between the homology groups of the Donaldson submanifolds and those of the ambient manifold.
 
\begin{proof}[Proof of \Cref{thm:homology_lefschetz_hyperplane_intro}]
 We start by changing $s_k$ for a section such that $f_k=\log |s_k|^2$ is Morse with
 \Cref{prop:ddd}.
 All critical points are on $U_k=M\setminus B_{g_k}(Z_k,c)$. 
Denote  $S_t:=f_k^{-1}(-\infty,t]$. Take $m\ll 0$ so that $S_m \subset B_{g_k}(Z_k,c)$,
and moreover, $S_m$ is a tubular neighbourhood of
$Z_k$, diffeomorphic to $B_{g_k}(Z_k,c)$ (this can be arranged as previously discussed). This means in particular that $H_i(S_m,\RR)\cong H_i(Z_k,\RR)$.
 We can moreover arrange
 that each critical level $H_\lambda=f_k^{-1}(\lambda)$ contains only one
 critical point $p_0$. Take $a<\lambda<b$ so that $p_0$ is the only
 critical point in the set $f_k^{-1}([a,b])$. 
 We are going to see that $H_i(S_a,\RR)\cong H_i(S_b,\RR)$ for $i\leq n-2$, and that there
 is a surjection $H_{n-1}(S_b,\RR)\twoheadrightarrow H_{n-1}(S_a,\RR)$. 
 Once this is established, one can then reason inductively one critical point after the other (in the order given by their value via $f_k$), by starting at the sub-level $S_m$ and ending at the sub-level $X=S_{M}$, where 
 $M$ denotes the maximum of $f_k$, thus concluding the proof of the statement.
 Let us then now prove the isomorphism/surjection between the homologies.

Let $(U,\tilde U,H)$ be a chart centered at $p_0$, where we consider
$\tilde U=B_r(0)$, and $H<\OO(2n)$. The tangent space $T_{p_0}X$ identifies
naturally with $\RR^{2n}$, where $\tilde U\subset \RR^{2n}$, as an $H$-representation.
The Hessian $H_{f_k}(0)$ decomposes $T_{p_0}X = T^+_{p_0}X\oplus  T^-_{p_0}X$, where
both are $H$-representations, and the Hessian is positive/negative definite on $T_{p_0}^\pm X$.
 
Let us call, following \cite[Definition 4.5]{Hep},  \emph{index of $p_0$} the representation $\mathrm{ind}_{p_0}:=T^-_{p_0}X$. 
By \cite[Theorem 7.6]{Hep}, $S_b$ has the homotopy type of $S_a$ with a copy
of $\DD(\mathrm{ind}_p)/H$ glued to $H_{a}=\bd S_a$ along $\bd \DD(\mathrm{ind}_p)/G$.
Here $\DD(\mathrm{ind}_p)$ is the unit disc
in $\mathrm{inc}_p$.

We show shortly that $m:=\dim (\mathrm{ind}_{p_0})\geq n$. Given that, we have that $S_b$ has the
homotopy type of $S_a$ with a copy $\DD^m/H$ glued along $S^{m-1}/H$. There is then an exact sequence
 $$
  \ldots \to 
  H_{i+1}(\DD^m/H,S^{m-1}/H,\RR) 
  \to H_i(S_a,\RR) \to H_i(S_b,\RR) \to
  H_{i}(\DD^m/H,S^{m-1}/H,\RR)   \to \ldots
  $$
As $H_i(\DD^m/H,S^{m-1}/H,\RR)=0$ for $i\neq m$ and it is equal to $\RR$ for $i=m$, we get that
 $H_i(S_a,\RR) \cong H_i(S_b,\RR)$ for $i<m-1$, and that, for $i=m-1$, there is a surjection 
 $H_i(S_a,\RR) \twoheadrightarrow H_i(S_b,\RR)$. As $m\geq n$, we get that 
   $H_i(S_a,\RR) \cong H_i(S_b,\RR)$ for $i\leq n-2$ and a surjection 
 $H_i(S_a,\RR) \twoheadrightarrow H_i(S_b,\RR)$ for $i=n-1$.
 
 To check that $m\geq n$, suppose by contradiction that $H_{f_k}$ is positive definite on 
 a subspace $P\subset T_{p_0}X$ of dimension $>n$. Then $\Pi(x)=H_{f_k}(x)+H_{f_k}(Jx)$
 would be positive definite on a non-zero complex subspace $P\cap JP$. Let us check
 that $\Pi$ must be negative definite on $T_{p_0}X$, giving a contradiction.
 
 The Hessian is $H_{f_k}=\nabla df_k$. Therefore $\Pi$ kills the $(2,0)$ and $(0,2)$-parts,
 only retaining the $(1,1)$-part. Moreover, at a critical point, the Hessian coincides
 with the usual derivative. Therefore $\Pi(x)=\bar\bd \bd f_k(x)$ (see the proof of \cite[Proposition 39]{Do96}),
 by choosing coordinates with $N_J(x)=0$.  Now
  \begin{align*}
 \bd f_k =& \frac{1}{|s_k|^2} (\la \bd_{L^{\ox k}} s_k,s_k\ra + \la s_k,\bar \bd_{L^{\ox k}} s_k \ra), \\
  \bar\bd \bd f_k =&  \frac{1}{|s_k|^4}
   (\la \bd_{L^{\ox k}} s_k,s_k\ra + \la s_k,\bar \bd_{L^{\ox k}} s_k \ra)^2 + \\
    &+ 
  \frac{1}{|s_k|^2} (\la \bar\bd_{L^{\ox k}} \bd_{L^{\ox k}} s_k,s_k\ra + \la \bd_{L^{\ox k}} s_k, \bd_{L^{\ox k}} s_k \ra 
  +  \la \bar \bd_{L^{\ox k}} s_k,\bar \bd_{L^{\ox k}} s_k\ra + \la s_k,\bd_{L^{\ox k}}\bar \bd_{L^{\ox k}} s_k \ra).
  \end{align*}
At a critical point of $f_k$, we have that $\nabla s=0$, which implies that $\bd_{L^{\ox k}} s_k =\bar \bd_{L^{\ox k}} s_k =0$. Hence the first as well as the linear terms of the second line of the right hand side drop out. 
Lastly, $(\bar \bd_{L^{\ox k}} \bd_{L^{\ox k}} + \bd_{L^{\ox k}}\bar \bd_{L^{\ox k}}) s_k = ik\omega s_k $, hence, at a critical point $x$ of $f_k$,
$$ 
 \bar\bd \bd f_k (x)=ik\omega + 
  \frac{1}{|s_k|^2} ( - \la\bd_{L^{\ox k}} \bar\bd_{L^{\ox k}} s_k,s_k\ra  
+ \la s_k,\bd_{L^{\ox k}}\bar \bd_{L^{\ox k}} s_k \ra ).
$$

The asymptotically holomorphic conditions say that 
$|\nabla \bar\bd_{L^{\ox k}} s_k| \leq C_1 k^{-1/2}$. There is also a lower bound on $|s_k|$ 
with respect to the $g_k$-metric. 
So with respect to the $g_k$-metric
$$ 
 \bar\bd \bd f_k (x)=i\omega_k + O(k^{-1/2})
 $$
hence it is negative definite for $k$ large enough since $i\omega_k(u,Ju)=-||u||_{g_k} <0$.
\end{proof}

Now we move to the homotopy groups. First, we deal with the orbifold fundamental group (see \Cref{def:orb_fundamental}).

\begin{theorem}
\label{thm:lefschetz_hyperplane_fund_grp}
If $n\geq 3$ then there is an isomorphism $\pi_1^\orb(X)\cong \pi_1^\orb(Z_k)$  for $k\gg 0$. If $n=2$ then there is
a surjection $\pi_1^\orb(Z_k)\twoheadrightarrow\pi_1^\orb(X)$ for $k\gg 0$.
\end{theorem}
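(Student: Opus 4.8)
The plan is to run the orbifold Morse-theoretic argument of the proof of \Cref{thm:homology_lefschetz_hyperplane_intro}, but now keeping track of how each handle attachment affects the \emph{orbifold} fundamental group $\pi_1^\orb=\pi_1(B(-))$ rather than real homology. First I would, exactly as there, replace $s_k$ by the perturbed section of \Cref{prop:ddd}, so that $f_k=\log|s_k|^2$ is an orbifold Morse function on $X\setminus Z_k$ whose critical points all lie in $U_k=X\setminus B_{g_k}(Z_k,c)$; recall from that proof that every critical point $p_0$ has index representation $V^-:=\mathrm{ind}_{p_0}$ of real dimension $m:=\dim V^-\ge n$. Writing $S_t=f_k^{-1}([-\infty,t])$, for $t\ll 0$ the set $S_t$ is an orbifold tubular neighbourhood of $Z_k$ (as discussed before that proof), so $\pi_1^\orb(S_t)\cong\pi_1^\orb(Z_k)$, while $S_t=X$ once $t$ exceeds the (finite) maximum of $f_k$. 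It therefore suffices to control the change in $\pi_1^\orb$ upon crossing a single critical point $p_0$, with isotropy group $G_{p_0}$.

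By \cite[Theorem 7.6]{Hep}, used in the orbifold category, crossing $p_0$ replaces $S_a$ by $S_b\simeq S_a\cup_\psi\bigl(\DD(V^-)/G_{p_0}\bigr)$, where $S(V^-)/G_{p_0}$, $\DD(V^-)/G_{p_0}$ and the gluing map $\psi\colon S(V^-)/G_{p_0}\to\partial S_a$ are understood in the orbifold sense. Passing to classifying spaces and using that $B(-)$ sends this orbifold pushout to a homotopy pushout, the Seifert--van Kampen theorem gives
$$
\pi_1^\orb(S_b)\;\cong\;\pi_1^\orb(S_a)\ast_{\pi_1^\orb(S(V^-)/G_{p_0})}\pi_1^\orb\bigl(\DD(V^-)/G_{p_0}\bigr),
$$
where $\pi_1^\orb(\DD(V^-)/G_{p_0})=G_{p_0}$ and the amalgamating homomorphism is the one induced by $S(V^-)/G_{p_0}\hookrightarrow\DD(V^-)/G_{p_0}$. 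I would identify this homomorphism through the Borel fibration $S^{m-1}\to B(G_{p_0}\ltimes S(V^-))\to BG_{p_0}$: its homotopy exact sequence reads $\pi_2(BG_{p_0})\to\pi_1(S^{m-1})\to\pi_1^\orb(S(V^-)/G_{p_0})\to G_{p_0}\to 1$ with $\pi_2(BG_{p_0})=0$, so it is an isomorphism when $m\ge 3$ and is surjective when $m=2$. Hence crossing $p_0$ leaves $\pi_1^\orb$ unchanged if $m\ge 3$ (pushout along an isomorphism), and makes it surject if $m=2$ (pushout along a surjection, so $\pi_1^\orb(S_b)$ is a quotient of $\pi_1^\orb(S_a)$).

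Assembling these facts finishes the argument: if $n\ge 3$ then $m\ge n\ge 3$ at every critical point, so chaining the isomorphisms from the tubular neighbourhood sublevel up to $X=S_t$ gives $\pi_1^\orb(Z_k)\cong\pi_1^\orb(X)$; if $n=2$ then $m\ge 2$, critical points of index $\ge 3$ leave $\pi_1^\orb$ unchanged and those of index $2$ make it surject, so composing yields a surjection $\pi_1^\orb(Z_k)\twoheadrightarrow\pi_1^\orb(X)$. The main obstacle is not any single computation but a bookkeeping one: one must be sure that the handle attachment of \cite[Theorem 7.6]{Hep} can be read as a statement about \emph{orbifolds} (equivalently, about classifying spaces) and not merely about underlying topological spaces, and that a Seifert--van Kampen theorem is available at the level of $\pi_1^\orb$. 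Both follow from the fact that near $p_0$ everything is the finite quotient $B^{2n}/G_{p_0}$ of a classical $G_{p_0}$-equivariant handle attachment, together with compatibility of $B(-)$ with gluings along sub-orbifolds with collar; alternatively these inputs can be extracted from \cite{Hep,Adem}.
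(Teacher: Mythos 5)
Your proposal is correct and takes a genuinely different route from the paper. The paper first establishes the presentation $\pi_1^\orb(X)=\pi_1(X\setminus(P\cup D))/\langle\gamma_i^{m_i}\rangle$ of \eqref{def:orb_fundamental} and then tracks this through the handle attachment by a case analysis: whether $p_0$ is a regular point, whether $p_0\in D$, and, in the latter case, whether $0$ is a minimum of the restriction of the lifted Morse function to $\mathrm{Fix}(H_i)$; the final surjection for $n=2$ arises because a new codimension-$2$ branch divisor $D_i$ can be born at $p_0$ when $m=2$, adding a relation $\gamma_i^{m_i}$. Your approach instead works directly at the level of classifying spaces: you read Hepworth's handle-attachment theorem as an orbifold pushout, apply Seifert--van Kampen to $B(-)$, and compute $\pi_1^\orb(S(V^-)/G_{p_0})\to G_{p_0}$ via the Borel fibration and the vanishing of $\pi_2(BG_{p_0})$. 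This treats all critical points uniformly (no separate regular/singular/$D$-membership cases) and replaces the combinatorics of branch divisors with the homotopy exact sequence of a fibration, which is arguably cleaner; in particular the $m=2$ kernel $\ZZ=\pi_1(S^1)$ plays exactly the role the paper's $\gamma_i^{m_i}$ plays. What it costs is the input that $B(-)$ sends the orbifold gluing of a disc-bundle handle to a homotopy pushout; you correctly flag this as the main obstacle, and while it is a true and essentially standard fact for orbifold groupoids glued along open/collar neighborhoods, it is not proved in the paper and would have to be extracted from \cite{Hep,Adem} or argued separately. The paper avoids this machinery precisely by working with the explicit presentation \eqref{def:orb_fundamental}. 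One small remark: the paper's final line, ``epimorphism $\pi_1^\orb(S_b)\twoheadrightarrow\pi_1^\orb(S_a)$,'' appears to have the arrow reversed; your direction $\pi_1^\orb(S_a)\twoheadrightarrow\pi_1^\orb(S_b)$ is the one consistent with the claimed conclusion $\pi_1^\orb(Z_k)\twoheadrightarrow\pi_1^\orb(X)$.
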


\begin{proof}
We consider the Morse function $f_k(x)$ and take 
$m\ll 0$ so that $S_m:=\{x \in X| \, f_k(x)\leq m\}$
is a tubular neighbourhood of $Z_k$, diffeomorphic to one of
the form $B_{g_k}(Z_k,c_0)$. This is an orbifold  bundle with fiber the disc $B(\nu_x,c_0)$, where $\nu \to Z_k$ is the orbifold normal bundle. 
Now, collapsing radially this orbifold bundle gives an orbifold deformation retract onto $Z_k$, i.e.\ the radial collapsing map $B(\nu_x,c_0)\to Z_k\subset B(\nu_x,c_0)$ is homotopic, as an orbifold map (i.e.\ in the sense of \cite{Chen06bis}), to the identity of $B(\nu_x,c_0)$.
Now, according to \cite[Theorem 1.2.(2)]{Chen06bis}, this means that $\pi_1^\orb(S_m) \simeq \pi_1^\orb(Z_k)$.
Now we move the value of $f_k$, and check
that the level sets $S_a$ have all the same
$\pi_1^\orb(S_a)$, for $n\geq 3$ (and there are epimorphisms for $n=2$). 
Eventually, for the maximum
$m'$ of $f_k(x)$, we have $\pi_1^\orb(X)=\pi_1^\orb(S_{m'})
\cong \pi_1^\orb(S_m)\cong \pi_1^\orb(Z_k)$.

We use the same notation as in the proof of \Cref{thm:homology_lefschetz_hyperplane_intro}. In particular, we analyze the case where $p_0$ is the unique critical point for $f_k$ having value in the interval $[a,b]$. We use the notation $P$ and $D$ of \Cref{def:orb_fundamental}.

Consider first the case where $p_0$ which is not singular, i.e.\ $H_{p_0}=\{1\}$ or equivalently $p_0\not \in P\cup D$. We have a Seifert-Van Kampen theorem with 
 $\pi_1(S_a\setminus(P\cup D))$, $\pi_1(\DD^m)=\{1\}$ and the intersection $\pi_1(S^{m-1})=\{1\}$ when $m\ge n \ge 3$, yielding
 $\pi_1(S_b\setminus(P\cup D))$. 
 So $\pi_1(S_b\setminus(P\cup D))\cong \pi_1(S_a\setminus(P\cup D))$, and there is no new $D_i$, so 
 $\pi_1^\orb(S_b)\cong \pi_1^\orb(S_a)$.  Similarly, when $n=m=2$, we have $\pi_1^\orb(S_a)\to \pi_1^\orb(S_b)$ is surjective.

Next, suppose that $p_0$ is singular with $H_{p_0}=H<\OO(2n)$, i.e.\ $p_0\in P\cup D$. We use $\widetilde{P},\widetilde{D}$ to denote the preimage of $P,D$ under the quotient map $\RR^{2n}\to \RR^{2n}/H$ in the local model $H\ltimes \RR^{2n}$ near $p_0$, which are unions of linear subspaces. Then there is an isomorphism $\pi_1((\DD^m\setminus (\widetilde{P}\cup \widetilde{D}))/H) \cong
\pi_1((S^{m-1}\setminus (\widetilde{P}\cup \widetilde{D}))/H)$, because 
when removing the central point, everything retracts to the boundary equivariantly. Therefore Seifert-Van Kampen theorem gives an isomorphism  $\pi_1(S_a\setminus(P\cup D)) \cong \pi_1(S_b\setminus(P\cup D))$.

Note that in $\DD^{m}$, $\widetilde{D}$ is a union of subspaces of codimension at most $2$. Now, if $p_0\notin D$, then $D$ must be disjoint from the stable manifold of $p_0$, hence the submanifolds $D_i\cap S_b$ retract onto $D_i\cap S_a$ under the Morse flow. That is every $D_i$ in $S_b$ has nontrivial intersection with $S_a$. Hence the quotients of $\pi_1(S_a\setminus(P\cup D)) \cong \pi_1(S_b\setminus(P\cup D))$ by the $\gamma_i$ are also the same, i.e.\ we get an isomorphism $\pi_1^\orb(S_a)\cong \pi_1^\orb(S_b)$.
Next we assume $p_0\in D$. Let $I$ be the index set such that $i\in I$ if and only if $p_0\in D_i$. Note that in a local model $H\ltimes \RR^{2n}$, 
$D_i=\mathrm{Fix}(H_i)/N_H(H_i)$ for some subgroup $H_i< H$ with $\mathrm{Fix}(H_i)\subset \RR^{2n}$ 
of codimension $2$. Note that the lifting $\widetilde{f}_k$ of $f_k$ to $\RR^{2n}$ has the property that $0$ is a critical point of $\widetilde{f}_k|_{\mathrm{Fix}(H_i)}$. Then  $D_i \cap S_a\neq \emptyset$ unless this is a minimum. In the former case, we can argue as before and we obtain again an isomorphism, as we quotient by 
the same relation on  $\pi_1(S_a\setminus(P\cup D)) \cong \pi_1(S_b\setminus(P\cup D))$. In the latter case, when it is a minimum,
it must be that $\mathrm{Fix}(H_i)\subset T^+_{p_0}X$. Hence the Morse index of $p_0$ is at most $2$. If $n\geq 3$ this is
impossible and we are done. If $n=2$, then at least we have an epimorphism $\pi_1^\orb(S_b)\twoheadrightarrow \pi_1^\orb(S_a)$, since in the second space we quotient by an extra $\gamma_i^{m_i}$. 
\end{proof}

For the homotopy groups, we assume that the 
ambient orbifold is simply-connected.

\begin{theorem}
\label{thm:homotopy_lefschetz_hyperplane}
Let $(X,\omega)$ be a symplectic orbifold of dimension $2n$ with $n\geq 3$, which 
has $\pi_1^{\orb}(X)=\{1\}$,
and let $Z_k$ be Donaldson suborbifolds for $k\gg 0$. Then 
 $$
   \begin{aligned}
   \pi_i(Z_k)\ox \RR \cong \pi_i(X)\ox \RR, \text{ for }i\leq n-2, \\
  \pi_i(Z_k)\ox \RR \surj \pi_i(X)\ox \RR, \text{ for }i= n-1.
  \end{aligned}
$$
\end{theorem}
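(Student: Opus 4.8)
The plan is to deduce the statement from the homological Lefschetz theorem \Cref{thm:homology_lefschetz_hyperplane_intro} via the relative Hurewicz theorem modulo the Serre class $\mathcal{C}$ of torsion abelian groups. First I would fix the basepoint and connectivity data. Since $\pi_1^{\orb}(X)=\pi_1(BX)=\{1\}$, the orbifold classifying space $BX$ is simply connected; and by \Cref{thm:lefschetz_hyperplane_fund_grp}, using $n\ge 3$, we also have $\pi_1^{\orb}(Z_k)=\pi_1(BZ_k)\cong \pi_1^{\orb}(X)=\{1\}$, so $BZ_k$ is simply connected too. I will carry out the argument with the orbifold homotopy groups $\pi_i(B-)$; this is harmless, since the identification $H^*_{\orb}(-,\ZZ)\ox\R\cong H^*(-,\R)$ recalled in \Cref{s:geo} shows that the natural map $B(-)\to (-)$ induces an isomorphism on real (co)homology, hence — both sides being simply connected — is a rational homotopy equivalence, so $\pi_i(-)\ox\R$ is the same whether computed on the orbifold or on its underlying space. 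In the same vein, \Cref{thm:homology_lefschetz_hyperplane_intro} may be read as $H_i(BZ_k,\R)\cong H_i(BX,\R)$ for $i\le n-2$, together with a surjection for $i=n-1$.

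Next I would run the two long exact sequences of the pair $(BX,BZ_k)$. From the homological Lefschetz isomorphisms and surjection just recalled, the long exact sequence in real homology forces
$$
H_i(BX,BZ_k;\R)=0 \qquad \text{for all } i\le n-1 ,
$$
because for $i\le n-2$ the map $H_i(BZ_k,\R)\to H_i(BX,\R)$ is an isomorphism and $H_{i-1}(BZ_k,\R)\to H_{i-1}(BX,\R)$ is injective (using simple connectivity in low degrees), while for $i=n-1$ the map $H_{n-1}(BZ_k,\R)\to H_{n-1}(BX,\R)$ is onto and $H_{n-2}(BZ_k,\R)\to H_{n-2}(BX,\R)$ is an isomorphism. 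Equivalently, $H_i(BX,BZ_k;\ZZ)$ lies in $\mathcal{C}$ for $i\le n-1$. Since the pair $(BX,BZ_k)$ is $1$-connected, the mod-$\mathcal{C}$ relative Hurewicz theorem then yields $\pi_i(BX,BZ_k)\in\mathcal{C}$ for $i\le n-1$, i.e.\ $\pi_i(BX,BZ_k)\ox\R=0$ for $i\le n-1$. Feeding this into the long exact homotopy sequence of the pair and tensoring over $\ZZ$ with $\R$ — which preserves exactness, as $\R$ is flat over $\ZZ$ — gives, for $2\le i\le n-2$, the isomorphism $\pi_i(BZ_k)\ox\R\cong\pi_i(BX)\ox\R$ (both adjacent relative terms vanishing), and for $i=n-1$ the surjection $\pi_{n-1}(BZ_k)\ox\R\surj\pi_{n-1}(BX)\ox\R$ (the relative term $\pi_{n-1}(BX,BZ_k)\ox\R$ vanishing); the case $i=1$ is trivial since all the $\pi_1$'s vanish. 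This is exactly the asserted statement.

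The proof is thus essentially formal once \Cref{thm:homology_lefschetz_hyperplane_intro,thm:lefschetz_hyperplane_fund_grp} are available; the one point needing a little care is the appeal to mod-$\mathcal{C}$ homotopy theory. I would record that $BX$ and $BZ_k$ (equivalently, the underlying spaces of $X$ and $Z_k$, which are triangulable) have the homotopy type of CW complexes, so Serre's mod-$\mathcal{C}$ relative Hurewicz theorem applies, and that the class of torsion abelian groups is a Serre class satisfying the axioms required for its relative form (closure under subgroups, quotients, extensions, and under $\ox$ and $\mathrm{Tor}$ against arbitrary abelian groups). I would also stress \emph{why} one cannot just imitate, at the level of homotopy groups, the Morse-theoretic proof of \Cref{thm:homology_lefschetz_hyperplane_intro}: there the sublevel set $S_b$ is obtained from $S_a$ by gluing a quotient disk $\DD^m/H$ along $S^{m-1}/H$ with $m\ge n$, and when $H$ acts non-freely on the index representation $\mathrm{ind}_{p_0}$ the boundary quotient $S^{m-1}/H$ need not be simply connected, so the pair $(S_b,S_a)$ is not $(m-1)$-connected in the naive sense. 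It is precisely this failure that makes the detour through real homology and Serre classes necessary, and I expect this conceptual point — rather than any computation — to be the only real subtlety.
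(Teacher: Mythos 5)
Your proof is correct, but it takes a genuinely different route from the paper's. The paper's argument is through Sullivan minimal models: the inclusion $Z_k\hookrightarrow X$ is a cohomology $(n-1)$-equivalence by \Cref{thm:homology_lefschetz_hyperplane_intro}, so \Cref{equivalence1} produces compatible minimal models with $V_X^{<n-1}\cong V_{Z_k}^{<n-1}$ and $V_X^{n-1}\hookrightarrow V_{Z_k}^{n-1}$; since $V^i=(\pi_i\otimes\RR)^*$ for simply connected spaces (both are, by \Cref{thm:lefschetz_hyperplane_fund_grp} and the surjection $\pi_1^\orb\twoheadrightarrow\pi_1$), dualizing gives the claim in one line. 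You instead go through Serre's mod-$\mathcal{C}$ machinery: the homological Lefschetz theorem kills $H_i(BX,BZ_k;\RR)$ for $i\le n-1$, the relative mod-$\mathcal{C}$ Hurewicz theorem kills $\pi_i(BX,BZ_k)\otimes\RR$ in the same range, and the long exact sequence of the pair finishes. Both routes hinge on exactly the same two inputs (\Cref{thm:homology_lefschetz_hyperplane_intro,thm:lefschetz_hyperplane_fund_grp}), so the difference is packaging. The paper's version is shorter and dovetails with Section \ref{sec:formality}, where \Cref{equivalence1} is used anyway for the formality results; yours is more elementary in the sense of not invoking rational homotopy theory, at the cost of having to be careful about the Serre class axioms, about replacing $BZ_k\to BX$ with a genuine pair via the mapping cylinder, and about the comparison $B(-)\to(-)$ (which itself silently invokes mod-$\mathcal{C}$ Whitehead). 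Your closing remark — that the Morse-theoretic argument of \Cref{thm:homology_lefschetz_hyperplane_intro} does not directly upgrade to homotopy groups because the attached pieces are quotient disks $\DD^m/H$ whose boundary $S^{m-1}/H$ need not be simply connected — is a genuine conceptual point that the paper does not spell out and is worth retaining; it explains why some algebraic detour (minimal models or Serre classes) is unavoidable here, unlike in Donaldson's smooth case.
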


\begin{proof}
From Theorem \ref{thm:lefschetz_hyperplane_fund_grp}, we have $\pi_1^{\orb}(Z_k)\cong \pi_1^{\orb}(X)=\{1\}$, for $n\geq 3$. As there is a surjection $\pi_1^{\orb}(X)\twoheadrightarrow \pi_1(X)$, we have that both $X$ and $Z_k$
are simply-connected. 
Now the result follows from \Cref{equivalence1}, by 
recalling that $V^i_X=(\pi_i(X)\ox \RR)^*$, and that
monomorphisms become epimorphisms after dualization.
\end{proof}

\begin{remark} 
The isomorphisms and monomorphism in \Cref{thm:homotopy_lefschetz_hyperplane} hold
over $\QQ$ as well.
\end{remark}

\section{Hard Lefschetz property} \label{sec:hard_lefschetz_property}

In this section we recall the $s$-Lefschetz property for any
compact symplectic manifold, generalizing the hard Lefschetz
property and study it for Donaldson suborbifolds
of  symplectic orbifolds.

\begin{definition} \label{s-Lefschetz}
  Let $(X,\omega)$ be a compact symplectic orbifold of dimension $2n$. We
  say that $X$ is $s$-Lefschetz with $s\leq n-1$ if
  $$
   [\omega]^{n-i}: H^i(X,\RR) \too H^{2n-i}(X,\RR)
  $$
  is an isomorphism for all $i\leq s$. 
\end{definition}

Note that $X$ is $(n-1)$-Lefschetz if it satisfies the hard
Lefschetz theorem. 

\begin{theorem} \label{Lefschetz-Donaldson}
Let $X$ be a compact symplectic orbifold of dimension $2n$, and
let $Z\hookrightarrow X$ be a Donaldson suborbifold. Then, for each $s\leq
n-2$, $X$ is $s$-Lefschetz if and only if $Z$ is $s$-Lefschetz.
\end{theorem}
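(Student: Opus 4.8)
The plan is to compare the hard Lefschetz operators of $X$ and of $Z$ through the restriction and Gysin maps of the inclusion $\iota\colon Z\hookrightarrow X$, using that $Z$ is Poincaré dual to a positive multiple of the symplectic class, and then to run an elementary "five‑lemma"–style argument degree by degree, feeding in the homological Lefschetz hyperplane theorem \Cref{thm:homology_lefschetz_hyperplane_intro}.

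First I would collect the two maps attached to $\iota$ that are needed: the restriction $\iota^*\colon H^j(X,\RR)\to H^j(Z,\RR)$ and the Gysin pushforward $\iota_*\colon H^j(Z,\RR)\to H^{j+2}(X,\RR)$. The latter is defined, exactly as in the smooth case, via the Thom class of the orbifold normal bundle of $Z$ in $X$ — equivalently, integration along the fibre of the orbifold tubular neighbourhood, which by the discussion preceding \Cref{prop:ddd} is diffeomorphic to a neighbourhood of the zero section of $L^{\otimes k}|_Z$ — and orbifold Poincaré duality with $\RR$‑coefficients holds (e.g.\ from Hodge theory for orbifolds and $H^*_{\orb,\mathrm{dR}}\cong H^*(-,\RR)$). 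Two facts will then be used. First, since $Z=Z(s_k)$ is the transverse zero set of a section of $L^{\otimes k}$, its Poincaré dual is $c_1(L^{\otimes k})=\tfrac{k}{2\pi}[\omega]$; hence $\iota_*1=\tfrac{k}{2\pi}[\omega]$, and the projection formula $\iota_*(\iota^*\gamma\cup\beta)=\gamma\cup\iota_*\beta$ gives $\iota_*\iota^*\gamma=\tfrac{k}{2\pi}[\omega]\cup\gamma$. Second, by \Cref{thm:homology_lefschetz_hyperplane_intro} together with the universal coefficient theorem and orbifold Poincaré duality, for every $j\le n-2$ both $\iota^*\colon H^j(X,\RR)\to H^j(Z,\RR)$ and $\iota_*\colon H^{2n-2-j}(Z,\RR)\to H^{2n-j}(X,\RR)$ are isomorphisms — the latter because, under Poincaré duality on $Z$ and on $X$, it is identified with the homology pushforward $H_j(Z,\RR)\to H_j(X,\RR)$.

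Next, for each $i\le n-2$ I would consider the diagram
\[
\begin{CD}
H^i(X,\RR) @>{[\omega]^{\,n-i}}>> H^{2n-i}(X,\RR) \\
@V{\iota^*}VV @AA{\iota_*}A \\
H^i(Z,\RR) @>{[\omega|_Z]^{\,(n-1)-i}}>> H^{2n-2-i}(Z,\RR)
\end{CD}
\]
and check that it commutes up to the nonzero scalar $\tfrac{k}{2\pi}$: since $\iota^*$ is a ring map and $[\omega|_Z]=\iota^*[\omega]$, the composition down–bottom–up sends $\alpha$ to $\iota_*\big(\iota^*([\omega]^{(n-1)-i}\cup\alpha)\big)=[\omega]^{(n-1)-i}\cup\alpha\cup\iota_*1=\tfrac{k}{2\pi}[\omega]^{\,n-i}\cup\alpha$, which is $\tfrac{k}{2\pi}$ times the image of $\alpha$ under the top map. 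By the second fact above, both vertical arrows are isomorphisms when $i\le n-2$, hence the top map $[\omega]^{\,n-i}$ is an isomorphism on $H^i(X,\RR)$ if and only if the bottom map $[\omega|_Z]^{\,(n-1)-i}$ is an isomorphism on $H^i(Z,\RR)$. Since $\dim Z=2(n-1)$, the bottom map is precisely the $i$‑th hard Lefschetz map of $Z$. Letting $i$ range over $0,\dots,s$ for any $s\le n-2$ then yields the desired equivalence: $X$ is $s$‑Lefschetz if and only if $Z$ is $s$‑Lefschetz.

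The only genuinely orbifold‑specific points, and where I expect a little care to be needed, are the construction of the Gysin map together with the projection formula for the embedding of a symplectic suborbifold, and the identification $\mathrm{PD}[Z]=c_1(L^{\otimes k})$ over $\RR$; both are routine in de Rham cohomology of orbifolds once one invokes the orbifold Thom isomorphism for $L^{\otimes k}|_Z$ and the transversality of $s_k$ exactly as in Donaldson's original argument. Everything else is formal diagram chasing, and in particular this recovers (and sharpens, via the $s$‑refinement and the converse implication) \Cref{hardLefschetzDonaldson_intro}.
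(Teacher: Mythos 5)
Your argument is correct and reaches the same conclusion, but organizes the ingredients differently from the paper's proof. The paper establishes the key equivalence $j^*[z]=0\iff[z]\cup[\omega]=0$ for $[z]\in H^{2(n-1)-i}(X,\RR)$ (equation~(\ref{eqn:v4})) by pairing $j^*[z]$ against classes $[\hat x]\in H^i(Z,\RR)$ pulled back from $X$, rewriting the pairing as $\int_X z\wedge x\wedge k\omega$ using $\PD[Z]=k[\omega]$, and invoking nondegeneracy of the cup-product pairing on both $X$ and $Z$; the two directions of the $s$-Lefschetz biconditional are then verified separately by direct kernel chases. You instead encapsulate the same Poincar\'e-duality input in the Gysin map $\iota_*$ and the projection formula, obtaining $\iota_*\iota^*=\tfrac{k}{2\pi}[\omega]\cup(-)$, and you make the additional (correct) observation that $\iota_*\colon H^{2n-2-i}(Z,\RR)\to H^{2n-i}(X,\RR)$ is an isomorphism for $i\le n-2$ because under Poincar\'e duality it is the homology pushforward, which \Cref{thm:homology_lefschetz_hyperplane_intro} guarantees to be an isomorphism in that range. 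With both vertical arrows of your square being isomorphisms and the square commuting up to a nonzero scalar, both implications drop out simultaneously. The mathematical content is identical on both sides --- real Poincar\'e duality for orbifolds, the Lefschetz hyperplane theorem, and $\PD[Z]$ proportional to $[\omega]$ --- but your Gysin-map formulation is tighter and makes the symmetry of the biconditional automatic; the small price is having to set up the orbifold Thom class and projection formula for $\iota$, which the paper bypasses by computing directly with $\int_Z$.
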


\begin{proof}
For any orbifold differential form $x$ on $X$, we shall denote by $\hat{x}$
the differential form on $Z$ given by $\hat{x}=j^*(x)$, where $j^*$ is the restriction map induced by the inclusion
$j\colon Z\hookrightarrow M$.  Let now
$p=2(n-1)-i$, where $i \leq {n-2}$, and let 
us focus on
$j^*\colon H^p(X,\RR)\to H^p(Z,\RR)$. Then, for $[z]\in H^p(X,\RR)$, we claim that
 \begin{equation} \label{eqn:v4}
  j^*[z]=0  \iff  [z]\cup [\omega]=0.
 \end{equation}
 
This can be seen via Poincar\'{e} duality as follows. Clearly $j^*[z]=0$ if and
only if for any $a\in H^i(Z,\RR)$ we have $j^*[z]\cdot a=0$. We know that
there is an isomorphism $H^i(Z,\RR)\cong H^i(X,\RR)$ (as $i\leq n-2$), thus we can
assume that there is a closed $i$-form $x$ on $X$ with
$[x|_Z]=[\hat{x}]=a$. So
 $$
 j^*[z]\cdot [\hat{x}]=\int_Z \hat{z}\wedge \hat{x} =\int_X z\wedge  x \wedge
 k\,\omega,
 $$
 since $[Z]=k \PD [\omega]$. Hence $j^*[z]=0$ if and only if
 $[z\wedge \omega] \cdot [x]=0$ for all $[x]\in
 H^i(X,\RR)$, from where the claim follows.

Now suppose that $X$ is $s$-Lefschetz, so
$[\omega]^{n-i}:H^i(X,\RR)\to H^{2n-i}(X,\RR)$ is an isomorphism for
$i\leq s$. We want to check that the map
$[\omega_Z]^{n-1-i}:H^i(Z,\RR) \to
 H^{2n-2-i}(Z,\RR)$ is injective. Let $[\hat{z}]\in H^i(Z,\RR) \cong H^i(X,\RR)$ and
 extend it to $[z] \in H^i(X,\RR)$. Then, $[\omega_Z]^{n-1-i} [\hat{z}]=0$
 implies that $j^*[\omega^{n-1-i} \wedge z]=0$, which by~(\ref{eqn:v4})
 is equivalent to
 $[\omega^{n-1-i} \wedge z\wedge \omega]=0$. Using the
 $s$-Lefschetz property of $X$, we get $[z]=0$ and thus
 $[\hat{z}]=0$.

 The converse is easy. If $X$ is $s$-Lefschetz and we take $[z]\in
 H^i(X,\RR)$ such that $[\omega^{n-i}\wedge z]=0$, from~(\ref{eqn:v4})
 it follows that
 $j^*[\omega^{n-1-i} \wedge z]=0$, i.e., $[\omega_Z^{n-1-i} \wedge
 z|_Z]=0$.  Hence $[\hat{z}]=0$ in $H^i(Z,\RR)$ and so $[z]=0$ since
 $i\leq n-2$.
\end{proof}

The following more precise version of \Cref{hardLefschetzDonaldson_intro} is then a direct consequence of the above:

\begin{corollary} \label{hardLefschetzDonaldson}
Let $X$ be a compact symplectic orbifold of dimension $2n$, and
let $Z\subset X$ be a Donaldson suborbifold. If $X$ is
hard Lefschetz, then $Z$ is also hard Lefschetz. Moreover, $X$ is
$(n-2)$-Lefschetz (but not necessarily hard Lefschetz) if and only if $Z$ is hard Lefschetz.
\end{corollary}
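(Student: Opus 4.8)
The plan is to deduce the corollary directly from \Cref{Lefschetz-Donaldson} by unwinding the definition of ``hard Lefschetz'' for the suborbifold. The one preliminary point to record is a dimension count: a Donaldson suborbifold $Z\subset X$ of the $2n$-dimensional orbifold $X$ has dimension $2n-2=2(n-1)$, so by \Cref{s-Lefschetz} the assertion that ``$Z$ is hard Lefschetz'' means exactly that $Z$ is $(n-2)$-Lefschetz, i.e.\ that $[\omega_Z]^{(n-1)-i}\colon H^i(Z,\RR)\to H^{2(n-1)-i}(Z,\RR)$ is an isomorphism for all $i\leq n-2$.

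For the first assertion, suppose $X$ is hard Lefschetz, i.e.\ $(n-1)$-Lefschetz. Then a fortiori $X$ is $(n-2)$-Lefschetz, since the isomorphisms $[\omega]^{n-i}\colon H^i(X,\RR)\to H^{2n-i}(X,\RR)$ required for all $i\leq n-1$ in particular hold for all $i\leq n-2$. Applying \Cref{Lefschetz-Donaldson} with $s=n-2$ then gives that $Z$ is $(n-2)$-Lefschetz, which by the dimension count above is precisely the statement that $Z$ is hard Lefschetz.

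For the second assertion I would apply \Cref{Lefschetz-Donaldson} once more with $s=n-2$: $X$ is $(n-2)$-Lefschetz if and only if $Z$ is $(n-2)$-Lefschetz, and the latter is equivalent to $Z$ being hard Lefschetz by the dimension count. The parenthetical remark that $X$ need not itself be hard Lefschetz is just the observation that being $(n-2)$-Lefschetz is a strictly weaker condition than being $(n-1)$-Lefschetz in general, since it imposes nothing on the middle-degree Lefschetz map $[\omega]\colon H^{n-1}(X,\RR)\to H^{n+1}(X,\RR)$; one can point to the existence of symplectic (orbi)folds where all lower-degree Lefschetz maps are isomorphisms while this one fails to be injective. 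There is no real obstacle in this argument, as all the content sits in \Cref{Lefschetz-Donaldson}; the only step requiring any attention is keeping straight that ``hard Lefschetz for $Z$'' refers to the range $i\leq n-2$ dictated by $\dim Z=2(n-1)$, and not to $i\leq n-1$.
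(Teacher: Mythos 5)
Your proof is correct and is precisely the unwinding of \Cref{Lefschetz-Donaldson} that the paper itself has in mind (the paper states the corollary as a ``direct consequence'' without spelling it out). The key observation you rightly emphasize—that $\dim Z = 2(n-1)$, so ``$Z$ hard Lefschetz'' means ``$Z$ is $(n-2)$-Lefschetz''—together with applying the theorem at $s=n-2$, is exactly the intended argument.
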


\section{Formality of Donaldson suborbifolds}\label{sec:formality}

In this section we recall the concept of $s$-formality. First, we need
some definitions and results about minimal models.
Let $(A,d)$ be a {\it differential graded algebra} (in the sequel,
we shall say just a differential algebra), that is, $A$ is a
graded commutative algebra over $\RR$, with a differential $d$ which is a derivation, i.e.
$d(a\cdot b) = (da)\cdot b +(-1)^{\deg (a)} a\cdot (db)$, where
$\deg(a)$ is the degree of $a$.
A differential algebra $(A,d)$ is said to be {\it minimal\/} if it satisfied the following two properties.
\begin{enumerate}
 \item $A$ is free as an algebra, that is, $A$ is the free
 algebra $\bigwedge V$ over a graded vector space $V=\oplus V^i$.
 \item There exists a collection of generators $\{ a_\tau,
 \tau\in I\}$, for some well ordered index set $I$, such that, for any $\mu,\tau\in I$, 
 $\deg(a_\mu)\leq \deg(a_\tau)$ if $\mu < \tau$, and $d
 a_\tau$ is expressed in terms of the the preceding $a_\mu$ (i.e.\ of the $a_\mu$ with $\mu<\tau$).
 This implies that $da_\tau$ does not have a linear part, i.e., it
 lives in ${\bigwedge V}^{>0} \cdot {\bigwedge V}^{>0} \subset {\bigwedge V}$.
\end{enumerate}

Morphisms between differential algebras are required to be degree
preserving algebra maps which commute with the differentials.
Given a differential algebra $(A,d)$, we denote by $H^*(A)$ its
cohomology.
We say that $({\SM},d)$ is a {\it minimal model} of the
differential algebra $(A,d)$ if $({\SM},d)$ is minimal and there
exists a morphism of differential graded algebras $\rho\colon
{({\SM},d)}\longrightarrow {(A,d)}$ inducing an isomorphism
$\rho^*\colon H^*({\SM})\longrightarrow H^*(A)$ on cohomology.
In \cite{Hal83} Halperin proved that any differential algebra
$(A,d)$ with $H^0(A)=\RR$ has a minimal model unique up to isomorphism. 

A minimal model $({\SM},\rd)$ is said to be {\it formal} if there is a
morphism of differential algebras $\psi\colon {({\SM},d)}\longrightarrow
(H^*({\SM}),d=0)$ that induces the identity on cohomology. 

A {\it minimal model\/} of a connected orbifold $X$ is a
minimal model $(\bigwedge V,d)$ for the de Rham complex $(\Omega_{\orb}(X),d)$ of orbifold forms on $X$. If $X$ is simply connected then the dual of the real homotopy vector space
$\pi_i(X)\otimes \RR$ is isomorphic to $V^i$ for any $i$. This relation also happens when $i>1$ and $M$ is nilpotent, that is,
the fundamental group $\pi_1(X)$ is nilpotent and its action on
$\pi_j(X)$ is nilpotent for $j>1$ (see~\cite{DGMS,GMo}).

We say that $X$ is {\it formal\/} if its minimal model is
formal or, equivalently, the differential algebras $(\Omega_{\orb}(X),d)$
and $(H^*(X,\RR),d=0)$ have the same minimal model. (For details
see~\cite{DGMS,GMo,Lup90} for example.) Therefore, if $X$ is formal and
simply connected, then the real homotopy groups $\pi_i(X)\otimes
\RR$ are obtained from the minimal model of $(H^*(X,\RR),d=0)$.

The following notion has been introduced in \cite{FM}:
\begin{definition}\label{primera}
Let $({\SM},d)$ be a minimal model. We say that
$({\SM},d)$ is \emph{$s$-formal}
($s\geq 0$) if we can write ${\SM}={\bigwedge V}$ such that for each $i\leq s$
the space $V^i$ of generators of degree $i$ decomposes as a direct
sum $V^i=C^i\oplus N^i$, where the spaces $C^i$ and $N^i$ satisfy
the three following conditions:
\begin{enumerate}
\item $d(C^i) = 0$,
\item the differential map $d\colon N^i\longrightarrow \bigwedge V$ is
injective,
\item any closed element in the ideal
$I_s=I(\bigoplus\limits_{i\leq s} N^i)$, generated by the space
$\bigoplus\limits_{i\leq s} N^i$ in the free algebra $\bigwedge
(\bigoplus\limits_{i\leq s} V^i)$, is exact in $\bigwedge V$.
\end{enumerate}
\end{definition}

In what follows, we shall write $N^{\leq s}$ and $\bigwedge
V^{\leq s}$ instead of $\bigoplus\limits_{i\leq s}N^i$ and
$\bigwedge(\bigoplus\limits_{i\leq s}V^i)$, respectively. In
particular, $I_s=N^{\leq s}\cdot (\bigwedge V^{\leq s})$.

A connected orbifold $M$ is $s$-formal if its minimal model is $s$-formal.

The following result is proved in \cite{FM} for compact differentiable manifolds, but the proof follows verbatim for orbifolds.

\begin{theorem}[{\cite[Theorem 3.1]{FM}}]
\label{criterio2}
Let $X$ be a connected and orientable compact orbifold of dimension $2n$, or $(2n-1)$. Then $X$ is formal if and
only if it is $(n-1)$-formal.
\end{theorem}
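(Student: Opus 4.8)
The plan is to transcribe the proof of \cite[Theorem 3.1]{FM}, which is purely algebraic once a short list of cohomological facts about $X$ is in place, and to isolate exactly that list so that the ``verbatim'' claim becomes a genuine assertion. The relevant object is the de Rham algebra of orbifold forms $(\Omega_{\orb}(X),d)$: it is a connected commutative differential graded algebra, its cohomology is $H^*(X,\RR)$ (by \cite{CFM}, recalled in \Cref{s:geo}), which is finite dimensional because $X$ is compact, and by \cite{Hal83} it has a minimal model $(\SM,d)=(\bigwedge V,d)$, unique up to isomorphism; all occurrences of ``formal'' and ``$s$-formal'' refer to this model. The implication ``formal $\Rightarrow (n-1)$-formal'' is immediate and purely formal, since a formal minimal model is $s$-formal for every $s$ (one reads off the decomposition $V^i=C^i\oplus N^i$ from a splitting of the formality quasi-isomorphism), so in particular $(n-1)$-formal. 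Hence the content of the statement is the converse.

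For the converse I would first recall the standard reformulation of formality, which is \Cref{primera} in the limit $s=\infty$: writing $V=C\oplus N$ with $d(C)=0$ and $d|_N$ injective, the model is formal if and only if every closed element of the ideal $I(N)$ generated by $N$ in $\bigwedge V$ is exact. Assuming $X$ is $(n-1)$-formal, one already knows this for the sub-ideal $I_{n-1}=N^{\le n-1}\cdot\bigwedge V^{\le n-1}$, and the task is to propagate it to all of $I(N)$. Following \cite{FM}, I would argue by induction on $s\ge n-1$, proving ``$s$-formal $\Rightarrow (s+1)$-formal'': choose any splitting $V^{s+1}=C^{s+1}\oplus N^{s+1}$ with $d(C^{s+1})=0$ and $d|_{N^{s+1}}$ injective (always possible), and verify condition (3) of \Cref{primera} for $I_{s+1}$. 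A closed $\xi\in I_{s+1}$ of degree $k$ with $k>\dim X$ is automatically exact, since $H^k(X,\RR)=0$; for $k\le\dim X$ one decomposes $\xi$ into the part lying in $\bigwedge V^{\le n-1}$, handled by $(n-1)$-formality, and a remainder, whose degree is necessarily $\ge n$. For the remainder the dimension hypothesis enters through Poincar\'e duality: since $\dim X\in\{2n-1,2n\}$ and $X$ is compact and orientable, $H^k(X,\RR)\cong H^{\dim X-k}(X,\RR)^{\vee}$, and in the relevant range one has $\dim X-k\le n-1$ (the one middle-degree case $\dim X=2n$, $k=n$ being treated by hand, using $H^j(X,\RR)=0$ for $j>\dim X$ together with the nondegenerate middle pairing), so the vanishing of $[\xi]$ is detected by cup products with classes that can be represented in degrees $\le n-1$, where $(n-1)$-formality applies again. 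Iterating over all $s$ yields that every closed element of $I(N)$ is exact, i.e.\ $X$ is formal.

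The only non-algebraic ingredients are therefore: (i) that $(\Omega_{\orb}(X),d)$ computes the finite dimensional $H^*(X,\RR)$; (ii) the existence of a minimal model; and (iii) Poincar\'e duality for the compact orientable orbifold $X$, namely $H^{\dim X}(X,\RR)\cong\RR$ together with nondegeneracy of $\cup\colon H^k(X,\RR)\times H^{\dim X-k}(X,\RR)\to H^{\dim X}(X,\RR)$. I expect (iii) to be the step requiring the most care, and it is exactly where ``orientable'' is used: it can be obtained from the orbifold Hodge theory of \cite{BBFMT} (already invoked in \Cref{s:geo}) together with the Hodge star of a compatible Riemannian metric, or from the fact that a compact orientable orbifold is a rational homology manifold. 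Granting (i)--(iii), every manipulation in \cite{FM} takes place inside the free algebra $\bigwedge V$ and is insensitive to whether the underlying space is a manifold or an orbifold, so that proof applies word for word; the main, and fairly modest, obstacle is simply to confirm, by going through \cite{FM} line by line, that no hidden use of smoothness of the underlying topological space is made beyond (i)--(iii).
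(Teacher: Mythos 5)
Your plan coincides with the paper's, which gives no proof of its own and simply asserts that \cite[Theorem 3.1]{FM} carries over verbatim to orbifolds; you have usefully isolated the non-algebraic inputs, namely that $(\Omega_{\orb}(X),d)$ has finite-dimensional cohomology equal to $H^*(X,\RR)$, that a minimal model exists, and that Poincar\'e duality holds for compact orientable orbifolds, and all three are indeed available in this setting (the first via \cite{CFM}, the second via \cite{Hal83} since $H^0=\RR$, the third via the orbifold Hodge theory of \cite{BBFMT}). The middle paragraph of your write-up paraphrases the argument of \cite{FM} somewhat loosely --- the original does not proceed by an induction ``$s$-formal $\Rightarrow (s+1)$-formal'' but constructs the quasi-isomorphism directly, and the decomposition of a closed $\xi$ according to which generators it involves is not a decomposition into closed summands --- but since the intent is precisely to import the proof of \cite{FM} unchanged, this looseness does not undermine the proposal.
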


Let $X$ and $Y$ be compact orbifolds. We say that
an orbifold map $f\colon X \to Y$ is a cohomology
$s$-equivalence ($s\geq 0$) if it induces isomorphisms
$f^*\colon H^i(Y,\RR) \stackrel{\cong}{\longrightarrow} H^i(X,\RR)$ on cohomology for $i<s$, and a
monomorphism $f^*\colon H^s(Y,\RR)\hookrightarrow H^s(X,\RR)$ for $i=s$. Therefore the inclusion
$Z\subset X$ of a Donaldson suborbifold is a cohomology $(n-1)$-equivalence by \Cref{thm:homology_lefschetz_hyperplane_intro}.

For a cohomology $s$-equivalence we have the following:

\begin{proposition}[{\cite[Proposition 5.1]{FM}}]
\label{equivalence1}
Let $X$ and $Y$ be compact orbifolds and let $f\colon X \to Y$ be
a cohomology  $s$-equivalence. Then there exist minimal models
$({\bigwedge V}_X,d)$ and $({\bigwedge V}_Y,d)$ of $X$ and $Y$, respectively, such
that $f$ induces a morphism of differential algebras $F\colon
(\bigwedge V_Y^{\leq s},d) \to (\bigwedge V_X^{\leq s},d)$ where
$F: V_{Y}^{<s} \congm V_{X}^{<s}$ is an isomorphism and $F: V_{Y}^s
\subset V_{X}^s$ is a monomorphism.
\end{proposition}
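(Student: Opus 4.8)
This is the orbifold version of \cite[Proposition 5.1]{FM}, and my plan is to follow that proof, which uses only formal properties of the de Rham differential graded algebra and of its cohomology, so it should go through verbatim in the orbifold setting. The idea is to construct the two minimal models $(\bigwedge V_X,d)$ and $(\bigwedge V_Y,d)$ \emph{simultaneously}, by induction on the degree of the generators, building the comparison morphism $F$ along the way. First I would record that the orbifold map $f\colon X\to Y$ induces a dga morphism $f^*\colon(\Omega_{\orb}(Y),d)\to(\Omega_{\orb}(X),d)$ which on cohomology is the given cohomology $s$-equivalence. Then I would construct, by induction on $i$ (starting trivially from $M_Y(0)=M_X(0)=\RR$), sub-dgas $M_Y(i)=\bigwedge V_Y^{\leq i}$ and $M_X(i)=\bigwedge V_X^{\leq i}$, maps $\rho_Y^{(i)}\colon M_Y(i)\to\Omega_{\orb}(Y)$ and $\rho_X^{(i)}\colon M_X(i)\to\Omega_{\orb}(X)$ inducing isomorphisms on cohomology in degrees $\leq i$ and injections in degree $i+1$, and a dga morphism $F^{(i)}\colon M_Y(i)\to M_X(i)$ with $\rho_X^{(i)}\circ F^{(i)}=f^*\circ\rho_Y^{(i)}$.

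The inductive step, passing from degree $i-1$ to degree $i$, is the standard Sullivan--Halperin step on the $Y$ side: adjoin closed generators $C_Y^i$ mapping to cocycles of $\Omega_{\orb}(Y)^i$ whose classes span the cokernel of $H^i(M_Y(i-1))\to H^i(Y,\RR)$, and generators $N_Y^i$ with injective differential $dn_j=w_j$, where the $w_j$ are cocycles representing a basis of the kernel of $H^{i+1}(M_Y(i-1))\to H^{i+1}(Y,\RR)$, mapped to chosen primitives of $\rho_Y^{(i-1)}(w_j)$. For $i\leq s-1$ I would then simply \emph{transport} these data along $F$: declare $V_X^i:=F(V_Y^i)$ (formal symbols $F(c_j)$, $F(n_j)$) with $dF(n_j):=F^{(i-1)}(w_j)$ and $\rho_X(F(v)):=f^*(\rho_Y(v))$. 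Because $F^{(i-1)}$ is an isomorphism and $f^*$ is an isomorphism in degrees $\leq i$ and injective in degree $i+1$, the relevant commuting squares show that $F(C_Y^i)$ exactly fills the cokernel of $H^i(M_X(i-1))\to H^i(X,\RR)$ and that $\{F^{(i-1)}(w_j)\}$ represents a basis of the kernel of $H^{i+1}(M_X(i-1))\to H^{i+1}(X,\RR)$; this yields the required properties of $\rho_X^{(i)}$ and makes $F^{(i)}\colon V_Y^{\leq i}\to V_X^{\leq i}$ an isomorphism.

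The hard part, and the one place where the hypothesis genuinely bites, is the passage to degree $i=s$. There $f^*$ is only injective on $H^s(-,\RR)$ and gives no control on $H^{s+1}(-,\RR)$, so the transported generators $F(C_Y^s)$ and $F(N_Y^s)$ may fail to fill the cokernel of $H^s(M_X(s-1))\to H^s(X,\RR)$ and may kill only a proper subspace of the kernel of $H^{s+1}(M_X(s-1))\to H^{s+1}(X,\RR)$. The remedy I would use is to still transport $V_Y^s$ injectively — here injectivity of $f^*$ on $H^s$ is exactly what guarantees that the classes $[f^*(\rho_Y(c_j))]$ stay linearly independent modulo the image of $H^s(M_X(s-1))$, and that the $F^{(s-1)}(w_j)$ stay independent — and then to adjoin \emph{extra} closed and non-closed generators to $V_X^s$, outside the image of $F$, chosen so that $\rho_X^{(s)}$ still induces an isomorphism on cohomology in degrees $\leq s$ and an injection in degree $s+1$. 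This makes $F^{(s)}$ an isomorphism $V_Y^{<s}\xrightarrow{\;\cong\;}V_X^{<s}$ and a monomorphism $V_Y^s\hookrightarrow V_X^s$. Finally I would complete $\bigwedge V_Y$ and $\bigwedge V_X$ to full minimal models by freely adjoining generators in degrees $>s$ on the two sides independently; since in a minimal model the differential of a generator of degree $\leq s$ involves only generators of degree $\leq s$, the sub-dga generated by $V^{\leq s}$ is closed under $d$, and the morphism $F:=F^{(s)}$ is the asserted $F\colon(\bigwedge V_Y^{\leq s},d)\to(\bigwedge V_X^{\leq s},d)$. The minor bookkeeping in low degrees (degree-$1$ generators and the extra degree-$2$ products they create) is handled exactly as in \cite{FM}; in all cases of interest here the orbifolds are simply connected, so $V^1=0$ and the issue does not occur.
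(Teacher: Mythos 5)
Your proof is correct and follows the same route as the reference. Note that the paper does not actually reproduce a proof here: it states the result, cites \cite[Proposition~5.1]{FM}, and remarks only that the manifold argument ``works equally for orbifolds'' since it is purely formal at the level of the de Rham dga. Your reconstruction via the inductive Sullivan--Halperin construction --- transporting the closed generators $C_Y^i$ and the non-closed generators $N_Y^i$ along $F$ for $i<s$ using the isomorphism of cokernels of $H^i(M(i-1))\to H^i(-)$ and kernels of $H^{i+1}(M(i-1))\to H^{i+1}(-)$, and at $i=s$ using only injectivity of $f^*$ on $H^s$ plus freely adjoined extra generators on the $X$ side --- is precisely that argument.
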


This result  is stated in \cite{FM} for manifolds but works equally for orbifolds. 

The following is a more precise version of \Cref{thm:formality}. 

\begin{theorem}\label{equivalence2}
\begin{enumerate}
\item Let $X$ and $Y$ be compact orbifolds, and let $f \colon X
\to Y$ be a cohomology $s$-equivalence. If $Y$ is $(s-1)$-formal
then $X$ is $(s-1)$-formal. 
\item Let $X$ be a compact symplectic
orbifold of dimension $2n$ and let $Z\subset X$ be a Donaldson
suborbifold. For each $s\leq n-2$, if $M$ is $s$-formal then $Z$
is $s$-formal. In particular, $Z$ is formal if $X$ is
$(n-2)$-formal.
\end{enumerate}
\end{theorem}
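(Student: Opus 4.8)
The plan is to prove part (1) by a direct argument with minimal models, and then to obtain part (2) by combining it with \Cref{thm:homology_lefschetz_hyperplane_intro} and \Cref{criterio2}. For part (1), I would start from \Cref{equivalence1}: applied to the cohomology $s$-equivalence $f\colon X\to Y$, it produces minimal models $(\bigwedge V_X,d)$ and $(\bigwedge V_Y,d)$ together with a morphism $F\colon(\bigwedge V_Y^{\leq s},d)\to(\bigwedge V_X^{\leq s},d)$ which is an isomorphism $V_Y^{<s}\cong V_X^{<s}$ and a monomorphism $V_Y^s\hookrightarrow V_X^s$; being induced by $f$, this $F$ is moreover the low-degree truncation of a morphism $\Phi\colon(\bigwedge V_Y,d)\to(\bigwedge V_X,d)$ of the full minimal models modelling $f$. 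Assuming $Y$ is $(s-1)$-formal, fix a decomposition $V_Y^i=C_Y^i\oplus N_Y^i$ for $i\leq s-1$ as in \Cref{primera} and transport it via $F$ by setting $C_X^i:=F(C_Y^i)$ and $N_X^i:=F(N_Y^i)$; since $F$ restricts to an algebra isomorphism $\bigwedge V_Y^{<s}\cong\bigwedge V_X^{<s}$, this yields a genuine decomposition $V_X^i=C_X^i\oplus N_X^i$ for $i\leq s-1$ and $I_{s-1}^X=F(I_{s-1}^Y)$, where $I_{s-1}^X\subset\bigwedge V_X^{<s}$ and $I_{s-1}^Y\subset\bigwedge V_Y^{<s}$ are the corresponding ideals. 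Conditions (1) and (2) of \Cref{primera} for $X$ are then immediate: $d(Fc)=F(dc)=0$ for $c\in C_Y^i$; and for $0\neq n\in N_Y^i$ one has $dn\in\bigwedge V_Y^{<s}$ (it has no linear part and degree $i+1\leq s$), so injectivity of $F$ there gives $d(Fn)=F(dn)\neq 0$.

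The crux is condition (3). Let $z\in I_{s-1}^X$ be closed; since $I_{s-1}^X\subset\bigwedge V_X^{<s}$, write $z=F(w)$ with $w\in I_{s-1}^Y$. As $dw$ again lies in $\bigwedge V_Y^{<s}$ and $F$ is injective there, $F(dw)=dz=0$ forces $dw=0$, so $w$ is a closed element of $I_{s-1}^Y$. The $(s-1)$-formality of $Y$ then gives $\eta\in\bigwedge V_Y$ with $w=d\eta$, and hence $z=F(w)=\Phi(w)=d\,\Phi(\eta)$ is exact in $\bigwedge V_X$; this shows $X$ is $(s-1)$-formal. I expect the main obstacle to be precisely this last step: condition (3) demands exactness in the \emph{full} algebra $\bigwedge V_X$, not merely in the truncation $\bigwedge V_X^{\leq s}$, so it is essential that $F$ be the restriction of an honest DGA morphism $\Phi$ of the whole minimal models, and not just an abstract morphism of truncated algebras — this is the point where \Cref{equivalence1} must be invoked with care.

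For part (2), recall from \Cref{thm:homology_lefschetz_hyperplane_intro} that the inclusion $j\colon Z\hookrightarrow X$ of a Donaldson suborbifold is a cohomology $(n-1)$-equivalence, hence in particular a cohomology $(s+1)$-equivalence whenever $s\leq n-2$. Applying part (1) to $f=j$ with $s+1$ in place of $s$, we conclude that if $X$ is $((s+1)-1)=s$-formal, then $Z$ is $s$-formal. Taking $s=n-2$: if $X$ is $(n-2)$-formal then so is $Z$, and since $Z$ is a compact connected (by the case $i=0\leq n-2$ of \Cref{thm:homology_lefschetz_hyperplane_intro}) orientable orbifold of dimension $2(n-1)$, \Cref{criterio2} applied with $n-1$ in place of $n$ gives that $Z$ is formal.
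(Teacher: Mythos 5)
Your proof is correct and follows essentially the same route as the paper's: invoke \Cref{equivalence1} to transport the $(s-1)$-formality data from $Y$ to $X$ via $F$, check conditions (1)--(3) of \Cref{primera}, and then deduce part (2) from part (1) combined with the fact that $Z\hookrightarrow X$ is a cohomology $(n-1)$-equivalence and \Cref{criterio2}. The one place you are more careful than the paper is the last step of condition (3): the paper writes $\hat\eta=d(F(\xi))$ with $\xi\in\bigwedge V_Y$, even though $F$ is a priori only defined on $\bigwedge V_Y^{\leq s}$; you correctly flag that one must use the full DGA morphism $\Phi\colon\bigwedge V_Y\to\bigwedge V_X$ of which $F$ is the truncation (which \Cref{equivalence1}, as a statement about a morphism induced by $f$, implicitly provides), and write $z=d\,\Phi(\eta)$. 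This is a genuine clarification of a small notational elision in the paper, not a different argument.
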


\begin{proof}
Let $(\bigwedge V_X,d)$ and $(\bigwedge V_Y,d)$ be the minimal
models of $X$ and $Y$, respectively, constructed in
Proposition~\ref{equivalence1}. For $i<s$, decompose $V_Y^i=C_Y^i
\oplus N_Y^i$ satisfying the conditions of Definition
\ref{primera}. Then, taking into account
Proposition~\ref{equivalence1}, we set $V_X^i=C_X^i \oplus N_X^i$
under the natural isomorphism  $F\colon V_Y^i\cong V_X^i$, $i<s$.
Consider a closed element $F(\eta)=\hat{\eta} \in N_X^{<s}\cdot
(\bigwedge V_X^{<s})$. Hence $\eta$ is a closed element in
$N_Y^{<s}\cdot (\bigwedge V_Y^{<s})$ and, by the
$(s-1)$-formality of $Y$, it is exact, i.e., $\eta=d\xi$, for
$\xi\in \bigwedge V_Y$. Take the image $\hat{\eta}=d(F(\xi))$ in
$\bigwedge V_X$. This proves (1). Now (2) follows from (1) and
using that the inclusion $j\colon Z\hookrightarrow X$ is a cohomology
$(n-1)$-equivalence.
\end{proof}


\bibliographystyle{alpha} 
\bibliography{bib-orbifold-ah}

\end{document}